\documentclass[onethmnum,onefignum,final]{siamltex}
\usepackage[english]{babel}
\usepackage{amsfonts}
\usepackage{amsmath}
\usepackage{amssymb}
\usepackage{bbm}
\usepackage{epsfig}
\usepackage{color}
\usepackage{enumerate}
\usepackage{showkeys}
\usepackage{dsfont}
\usepackage{mathrsfs}

\renewcommand{\Re}{\operatorname*{Re}}
\renewcommand{\Im}{\operatorname*{Im}}
\newcommand{\real}{\operatorname*{Re}}

\newcommand{\vrho}{\varrho}
\newcommand{\tol}{\mathrm{tol}}
\newcommand{\tf}{\mathrm{f}}
\newcommand{\te}{\mathrm{e}}

\def\mi{{\mathrm{i}\mkern1mu}}
\def\opi{{\mathrm{i}\mkern1mu}}
\def\erf{{\mathrm{erf}\mkern1mu}}
\def\bone{\mathbbm{1}}
\def \e{{\operatorname*{e}}}
\def \veps{\varepsilon}
\def \z{\zeta}

\def \dt{h}
\newcommand{\qv}{{\mathbf{q}}}
\newcommand{\Qv}{{\mathbf{Q}}}

\newcommand{\tv}{{\mathbf{t}}}
\newcommand{\Vv}{{\mathbf{V}}}
\def \bv{{\mathbf{b}}}
\def \cv{{\mathbf{c}}}
\def \Wv{{\mathbf{W}}}
\newcommand{\omegav}{\boldsymbol{\omega}}
\newcommand{\tauv}{\boldsymbol{\tau}}

\newcommand{\gv}{{\mathbf{g}}}
\newcommand{\ev}{{\mathbf{e}}}
\newcommand{\Ev}{{\mathbf{E}}}
\newcommand{\Iv}{{\mathbf{I}}}
\newcommand{\fv}{{\mathbf{f}}}
\newcommand{\uv}{{\mathbf{u}}}
\newcommand{\Av}{{\mathbf{A}}}

\newcommand{\bR}{\mathbb{R}}

\newcommand{\IR}{\mathbb{R}}

 \newtheorem{remark}[theorem]{Remark}
\newtheorem{assumption}[theorem]{Assumption}

\makeatletter\@addtoreset{equation}{section}\makeatother
\graphicspath{ {images/}{images}}


\begin{document}

\title{Numerical approximation of the Schr\"odinger equation with concentrated potential}

\author{L. Banjai \thanks{The Maxwell Institute for Mathematical Sciences, School of Mathematical \& Computer Sciences, Heriot-Watt University, Edinburgh EH14 4AS, UK.
({\tt l.banjai@hw.ac.uk})} \and M. L\'opez-Fern\'andez \thanks{Department of Mathematical Analysis, Statistics and O.R. and Applied Mathematics, Faculty of Sciences, University of M\'alaga, Spain ({\tt maria.lopezf@uma.es}) and Department of Mathematics Guido Castelnuovo, Sapienza University of Rome, Italy}}

\maketitle

\begin{abstract}
  We present a family of algorithms for the numerical approximation of the Schr\"odinger equation with potential concentrated at a finite set of points. Our methods belong to the so-called fast and oblivious convolution quadrature algorithms. These algorithms are special implementations of Lubich's Convolution Quadrature which allow, for certain applications in particular parabolic problems, to significantly reduce the computational cost and memory requirements. Recently it has been noticed that their use can be extended to some hyperbolic problems. Here we propose a new family of such efficient algorithms tailored to the features of the Green's function for Schr\"odinger equations. In this way, we are able to keep the computational cost and the storage requirements significantly below existing approaches. These features allow us to perform reliable numerical simulations for longer times even in cases where the solution becomes highly oscillatory or  seems to develop finite time blow-up. We illustrate our new algorithm with several numerical experiments.
\end{abstract}

\noindent{\bf Keywords:} fast and oblivious algorithms, convolution quadrature, Schr\"{o}dinger equation,
boundary integral equations, contour integral methods.

\vspace{1em}
\noindent{\bf AMS Classification: } 65R20, 65L06, 65M15, 65M38

\section{Introduction}
We consider the efficient numerical approximation of Schr\"odinger equations with the potential concentrated at a finite set of points in dimension $D=1,2,3$. These problems can be formally described by the equation
\begin{equation}\label{conschrodinger}
\frac1\mi \psi_t(t,x)=\left(\Delta - \sum_{j=1}^{M} V_j \delta(x-x_j) \right) \psi(t,x), \qquad x\in \bR^{D},
\end{equation}
where  $\Delta$ denotes the Laplacian and the coupling factors $V_j$ may depend on $t$, $V_j=V_j(t)$, and/or on the value of $\psi$ at $(t,x_j)$, $V_j=V_j(\psi(t,x_j))$, $j=1,\dots,M$. These models have been used to describe different phenomena in solid state physics, optics and acoustics, and have been rigorously analyzed by several authors in the mathematical physics community, starting from the one-dimensional case \cite{AT01}, followed by the three-dimensional case \cite{AAFT03,AAFT04}, and more recently the two-dimensional case \cite{CCF17,CCT19}. The reformulation of these models  as $M$-dimensional systems of Volterra integral equations has proven to be very useful for the analysis of the existence, uniqueness, and regularity of solutions.  In this paper we address the efficient numerical approximation of such integral representations. Even with the reduction to a finite dimensional system of integral equations, the numerical approximation of \eqref{conschrodinger} can be quite delicate \cite{Negulescu}, particularly in the nonlinear case. We notice that the long-time behaviour of the solution to the nonlinear Schr\"odinger equation (NLS) with concentrated potential is not always well-understood and it might become highly oscillatory and even develop blow-up in finite time.

Let us recall that for the Schr\"odinger equation in the whole space
\begin{equation}
\frac1\mi \psi_t(t,x)-\Delta \psi(t,x)=0, \qquad x\in \bR^{D},
\end{equation}
with $D=1,2,3$, the Green's function is given by \cite[Equation (2.24)]{Economou}
\begin{equation}\label{green_schrodinger}
k(t,x) =  
 \mi \left(\frac1{4\pi \mi t}\right)^{D/2}
e^{i\|x\|^2/4t}, \qquad x\in \bR^D, t>0.
\end{equation}
The corresponding transfer operator, i.e., the Laplace transform of $k$
\[
K(z,x) = \int_0^\infty e^{-zt} k(t,x) dt, \qquad \Re z > 0,
\]  is given by
\begin{equation}\label{transfer_schrodinger}
K(z,x) = \left\{\begin{array}{ll}
\displaystyle \frac{1}{2\sqrt{z/i}}\exp\left(-|x|\sqrt{z/i} \right) \qquad & \text{for } D = 1, \\[1em]
\displaystyle \frac{1}{2\pi}K_0\left(\|x\|\sqrt{z/i} \right) \qquad & \text{for } D = 2,\\[1em]
\displaystyle \frac{1}{4\pi \|x\|}\exp \left(-\|x\|\sqrt{z/i} \right) \qquad & \text{for } D = 3.
\end{array}
\right.
\end{equation}
In the above $K_0$ is the modified Bessel function of the second kind \cite{NIST} and
\begin{equation}\label{def_sqrt}
\sqrt{z/i} = e^{-i\pi/4} \sqrt{z},
\end{equation}
where $\sqrt{z}$ is the branch with the positive real part  and with the branch cut along the negative real axis. Hence the transfer operator $K(x,z)$ as a function of $z$ is analytically extended to the cut complex plane $\mathbb{C} \setminus (-\infty,0]$.

In applications, see \cite{Lopez_Castillo_1988,Negulescu} and Section~\ref{sec:numerics}, it is important to be able to compute accurately and efficiently the convolution in time with the Green's function. Namely,
\begin{equation}
  \label{eq:Sconv}
\psi(t,x) = \int_{-\infty}^t k(t-\tau,x-x_0) f(\tau, x_0) d\tau,
\end{equation}
solves the Schr\"{o}dinger equation with a source at $x_0$:
\[
\frac1{\mi} \psi_t(t,x) -\Delta \psi(t,x) = f(t,x_0).
\]
Due to the kernel  being non-local and highly oscillatory, accurate and efficient discretization of \eqref{eq:Sconv} is not easy. In \cite{Negulescu} a carefully constructed, accurate numerical method is presented and in \cite{Lub92} numerical experiments illustrated the good computational properties of convolution quadrature (CQ) for \eqref{eq:Sconv}.  Due to the non-locality of the kernel, computing $\psi(x,t)$ at $N$ time steps $t_j$ using the method in \cite{Negulescu} has an  $O(N^2)$ computational complexity, whereas standard FFT based methods for CQ, \cite{LU_88II,lb_ms} can reduce this to $O(N\log N)$ for linear problems and to $O(N\log^2N)$ for nonlinear problems \cite{HaLuSch,lb_ms}. However, both methods require to store $N$ solution vectors in memory. In this work we describe an algorithm that is both very easy to implement and can significantly reduce the amount of memory used. More precisely the memory requirements will be reduced from $O(N)$ to $O(n_0+ \log N)$, with $n_0\ll N$. 
Our method belongs to the family of {\em oblivious} algorithms \cite{SchaLoLu,BanLoSch}, the name indicating  that the memory requirements can be significantly reduced. 
Due to the way our algorithm is built we also expect it to be extendable to a variable step implementation, something which by construction is difficult for FFT based  methods \cite{HaLuSch,lb_ms}.


Oblivious quadratures were first developed for parabolic problems \cite{LuSch,SchaLoLu}, where the transfer operator is {\em sectorial}, i.e., it admits a holomorphic extension to the complement of an acute sector in the left half of the complex plane and it grows at most algebraically as $|z|\to \infty$. The extension of such ideas to hyperbolic problems, where the transfer operator typically exhibits exponential growth as $\real z \to -\infty$,  is much more recent and has been first developed for the two-dimensional and the damped three dimensional wave equation \cite{BanLoSch}. The application to the Schr\"odinger equation has never been addressed to our knowledge. Furthermore in this paper we use a new approach which allows a substantial simplification of the implementation with respect to the algorithms in \cite{LuSch,SchaLoLu,BanLoSch} and in our experience even a marginal improvement in the compressibility and the computational times. In particular, we take the real inverse Laplace transform approach introduced in \cite{BanLo} for the fractional integral, which fits the more favourable sectorial framework. We thus generalize the ideas in \cite{BanLo} to the non-sectorial situation of the Schr\"odinger Green's kernel and propose a {\em mosaic-free}, fast and oblivious algorithm for the approximation of \eqref{eq:Sconv}. Here {\em mosaic} refers to the special partition of the integration domain $\tau \le t$ and organization of the book-keeping which is required to compress the memory by the algorithms in \cite{LuSch,SchaLoLu,BanLoSch}. The algorithm we propose here will only require the computation and storage of certain quantities at the beginning of the integration procedure and the update of a unique set of ordinary differential equations from one step to the next one, for the whole time interval. More details about the implementation are given in Section~\ref{sec:param}.

The outline of the paper is as follows. In Section~\ref{sec.rkcq} we give a brief introduction to Runge-Kutta based convolution quadrature,  in Section~\ref{sec:int} we describe a new (real) integral representation of the convolution quadrature weights associated to the transfer operator \eqref{transfer_schrodinger}. An efficient quadrature of this representation of the convolution weights is described in Section~\ref{sec:quad}. The use of this quadrature in an efficient algorithm for computing discrete convolutions is explained in Section~\ref{sec:param}. The new method is then illustrated by several substantial numerical experiments in Section~\ref{sec:numerics}. In particular we describe in detail an application to a nonlinear Schr\"odinger equation describing the suppression of quantum beating taken from \cite{Negulescu}. The codes used for these experiments are published at \cite{codes}.

\section{Runge-Kutta convolution quadrature}
\label{sec.rkcq}

In this section we briefly describe convolution quadrature (CQ) as applied to the evaluation of one sided convolutions
\begin{equation}
  \label{eq:conv}
u(t) = \int_0^tk(t-\tau) g(\tau)d\tau,
\end{equation}
where $k$ is a given kernel with Laplace transform $K(z) = \mathscr{L} k (z)$ and given data $g$. In the applications in this paper, $k$ will also be a function of $x$, but as this dependence plays no role in this section we supress it for now. A basic assumption for the application of CQ to \eqref{eq:conv} is that there exist $C>0$ and $\mu \in \bR$ such that
\begin{equation}\label{mu}
|K(z)| \le C|z|^{\mu}, \quad \mbox{for } \real z >0.
\end{equation}
In this paper we use CQ based on implicit
$A$-stable Runge-Kutta methods \cite{HaWII}. We employ standard
notation for an $s$-stage Runge-Kutta discretization based on the
Butcher tableau described by the matrix $\mathbf{A} =
(a_{ij})_{i,j=1}^s \in \IR^{s\times s}$ and the vectors $\bv =
(b_1,\ldots,b_s)^T \in \IR^s$ and $\cv = (c_1,\ldots,c_s)^T \in
[0,1]^s$. The corresponding stability function is given by
\begin{equation}\label{stabfun}
r(z) = 1+z\bv^T (\mathbf{I}-z \mathbf{A})^{-1} \bone,
\end{equation}
where
\[
\bone = (1,1,\dots,1)^T.
\]
Recall that $A$-stability is equivalent to the condition $|r(z)| \leq 1$
for $\real z \leq 0$.  In the following we collect all the assumptions
on the Runge-Kutta method. These are satisfied by, for example, Radau
IIA and Lobatto IIIC families of Runge-Kutta methods.

\begin{assumption}\label{as:RK}
\begin{enumerate}[(a)]
%
\item
\label{as:RK-a}
The Runge-Kutta method is $A$-stable with (classical) order $p\ge 1$ and
stage order $q\leq p$.
\item
\label{as:RK-b}
The stability function satisfies
$|r(\mi y)| < 1$ for all real $y\ne 0$.
\item
\label{as:RK-c} The Runge-Kutta coefficient matrix
$\mathbf A$ is invertible.
\item
\label{as:RK-d}
The Runge-Kutta method is stiffly accurate, i.e.,
\[
\mathbf{b}^T\mathbf{A}^{-1}  =  (0, 0, \dots, 1).
\]
This implies that
\[
\lim_{|z| \rightarrow \infty}r(z) = 1-b^TA^{-1}\bone  = 0
\]
 and $c_s = 1$.
\end{enumerate}
\end{assumption}

Since $r(z)$ is a rational function, the above assumptions imply that
\begin{equation}
  \label{eq:r_decay}
  r(z) = O(z^{-1}), \qquad |z| \rightarrow \infty.
\end{equation}

Following the theory in \cite{LubOst}, we define the weight matrices $\Wv_n$ corresponding to the operator $K$ as the coefficients of the power expansion
\begin{equation}\label{Wn}
  \sum_{n=0}^\infty \Wv_n\z^n = K\left( {\boldsymbol{\Delta}(\zeta) \over h}\right),
\end{equation}
where $h$ is the step size and the matrix-valued function $\boldsymbol{\Delta}(\zeta)$ is the so-called {\em symbol} of the Runge--Kutta method:
\begin{equation}\label{Delta}
  \boldsymbol{\Delta}(\zeta) = \Bigl(\mathbf{A} + {\z \over 1-\z}\bone \bv^T\Bigr)^{-1}.
\end{equation}
Denoting by $\omegav_n=(\omega_n^1,\dots,\omega_n^s)$ the last row of $\Wv_n$, the
approximation to the convolution integral \eqref{eq:conv} at time
$t_{n+1}=(n+1)h$ is given by
\begin{equation}\label{rk-cq}
 u_{n+1}=   \sum_{j=0}^n \sum_{i=1}^s  \omega_{n-j}^i\, g( t_j + c_ih) =
   \sum_{j=0}^n  \omegav_{n-j}\, \gv_j,
\end{equation}
with the column vector $\gv_j = g(t_j+\mathbf{c}h)=\bigl( g(t_j+c_ih) \bigr)_{i=1}^s$.

The convergence order of this approximation has been investigated in
\cite{LubOst} for parabolic problems, i.e., for sectorial
$K$, and in \cite{BanL} and \cite{BanLM} for hyperbolic
problems, i.e., for non-sectorial operators.

With the row vector $\ev_n(z)=(e_n^1(z),\dots,e_n^s(z))$ defined as the last row
of the $s\times s$ matrix $\Ev_n(z)$ given by
\begin{equation}\label{En-def}
  (\Delta(\z)-zI)^{-1} = \sum_{n=0}^\infty \Ev_n(z) \,\z^n ,
\end{equation}
we obtain an integral formula for the weights
\begin{equation}\label{omegaRK-int}
  \omegav_n = {h\over 2\pi \mi}
  \int_\Gamma  K(z)\ev_n(hz)\, dz.
\end{equation}
This representation follows from Cauchy's formula and the definition of the weights in \eqref{Wn}, with the integration contour $\Gamma$ chosen so that it
surrounds the poles of $\ev_n(hz)$. An explicit expressing for $\ev_n$ is  given by
\begin{equation}\label{en-rk}
\ev_n(z) = r(z)^n \mathbf{q}(z),
\end{equation}
with the row vector $\mathbf{q}(z) =  \bv^T (I-z\mathbf{A})^{-1}$; cf.~\cite[Lemma~2.4]{LubOst}. The $A$-stability assumption implies that the poles of $r(z)$ are all in the right-half plane. Further, due to the decay of the rational function $r(z)$, see \eqref{eq:r_decay}, for $n > \mu+1$ with $\mu$ in \eqref{mu}, the contour $\Gamma$ can be deformed into the imaginary axis.

For the weight matrices it holds
\begin{equation}\label{Wmatrix-int}
  \Wv_n = {h\over 2\pi \mi}
  \int_\Gamma   K(z) \Ev_n(hz)\, dz.
\end{equation}
By \cite[Lemma~2.4]{LubOst},  for $n\ge 1$, $\Ev_n(z)$ is the rank-1 matrix given
by
\begin{equation}\label{En-formula}
\Ev_n(z) = r(z)^{n-1} (I-z\mathbf{A})^{-1} \bone \bv^T (I-z\mathbf{A})^{-1}.
\end{equation}
The Runge-Kutta approximation of the inhomogeneous linear problem
\begin{equation}
  \label{eq:ode}
  y'(t) = z y(t) + g(t), \quad y(0) = 0,
\end{equation}
at time $t_{n+1}$ is given by
\begin{equation}
  \label{eq:rkode}
  y_{n+1}(z) = h \sum_{j=0}^n \ev_{n-j}(hz) \gv_j
\end{equation}
and thus the approximation of the convolution integral in~\eqref{rk-cq} can be rewritten as~\cite[Proposition~2.4]{LubOst}
\begin{equation}
  \label{eq:ci-int}
  u_{n+1} = \frac{1}{2\pi\mi} \int_\Gamma K(z) y_{n+1}(z) dz.
\end{equation}

We will require the following technical lemmas proved in \cite{BanLoSch} and \cite{BanLo} where examples of numerically computed  values of $\gamma$ can also be found.

\begin{lemma}\label{lemma:gamma}
Let $r(z)$ be the stability function of a Runge-Kutta method satisfying Assumption~\ref{as:RK} and let
\[
\gamma(\xi) = \inf_{-\xi \leq \Re z \leq 0} \frac{\log |r(z)|}{\Re z}.
\]

Then $\gamma(\xi) \in (0,1]$ for  $\xi > 0$, it monotonically increases as $\xi \to 0$ and
\[
|r(z)| \leq e^{\gamma(\xi)\Re z},
\]
for all $z$ in the strip  $-\xi \leq \Re z \leq 0$.
\end{lemma}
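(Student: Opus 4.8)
\emph{Easy ingredients.} The plan is to first record that, by $A$-stability, $|r(z)|\le 1$ for $\Re z\le 0$, so $\log|r(z)|\le 0$ and the quotient $\log|r(z)|/\Re z$ is nonnegative on $\Re z<0$; hence $\gamma(\xi)\ge 0$. Since $\gamma(\xi)$ is defined as this infimum, the claimed inequality is merely a restatement: for $\Re z<0$ one has $\log|r(z)|/\Re z\ge\gamma(\xi)$, and multiplying by $\Re z<0$ reverses the sign to give $\log|r(z)|\le\gamma(\xi)\Re z$, i.e. $|r(z)|\le e^{\gamma(\xi)\Re z}$; on $\Re z=0$ it reduces to $|r(\mi y)|\le 1$, again $A$-stability. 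Monotonicity is purely set-theoretic: for $\xi_1<\xi_2$ the strip of width $\xi_1$ sits inside that of width $\xi_2$, so the infimum over the smaller set is at least that over the larger, $\gamma(\xi_1)\ge\gamma(\xi_2)$; thus $\gamma$ increases as $\xi\to0$.

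\emph{Upper bound.} To see $\gamma(\xi)\le 1$ I would exhibit one approaching sequence. Since the method has order $p\ge1$, $r(z)=1+z+O(z^2)$, and because $r$ has real coefficients $r(-\epsilon)$ is real and positive for small $\epsilon>0$ with $\log r(-\epsilon)=-\epsilon+O(\epsilon^2)$. Hence along the negative real axis $\log|r(-\epsilon)|/(-\epsilon)\to1$ as $\epsilon\to0^+$, and every such point lies in the strip, so the infimum is $\le1$.

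\emph{Strict positivity.} The real content is $\gamma(\xi)>0$, which I would establish by contradiction with a compactness argument. Set $f(z)=\log|r(z)|/\Re z$ on the open strip $-\xi\le\Re z<0$, with the convention $f=+\infty$ at zeros of $r$, so that $f$ is lower semicontinuous and, by the maximum modulus principle (a nonconstant $r$ with $r(\infty)=0$ from Assumption~\ref{as:RK}(\ref{as:RK-d}) cannot have $|r(z)|=1$ at an interior point of the left half-plane), strictly positive throughout. Suppose $\gamma(\xi)=0$ and choose $z_k$ with $f(z_k)\to0$; these have $\Re z_k<0$ and $r(z_k)\ne0$. From $r(z)=O(z^{-1})$ we get $\log|r(z)|\le\log C-\log|z|$ for large $|z|$, whence $f(z)\ge(\log|z|-\log C)/\xi\to+\infty$; thus $(z_k)$ must be bounded and, along a subsequence, $z_k\to z_*$ in the closed strip. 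It then remains to exclude every possible $z_*$.

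\emph{Excluding the limit points; the crux.} If $\Re z_*<0$, lower semicontinuity and pointwise positivity give $\liminf_k f(z_k)\ge f(z_*)>0$, contradicting $f(z_k)\to0$. If $z_*=\mi y_*$ with $y_*\ne0$, then Assumption~\ref{as:RK}(\ref{as:RK-b}) gives $|r(\mi y_*)|<1$, so $\log|r(z_k)|\to\log|r(\mi y_*)|<0$ while $\Re z_k\to0^-$, forcing $f(z_k)\to+\infty$. The genuinely delicate case, and the main obstacle, is $z_*=0$, where $r(0)=1$ rules out any uniform gap $|r|<1$ and one must weigh the vanishing of $\log|r(z)|$ against that of $\Re z$. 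Here I would write $\log r(z)=z+\psi(z)$ with $\psi$ analytic and $\psi(0)=\psi'(0)=0$, so $\psi(z)=O(|z|^2)$; since $|r(\mi y)|^2=r(\mi y)r(-\mi y)$ is even in $y$, Assumption~\ref{as:RK}(\ref{as:RK-b}) makes the leading homogeneous harmonic part of $\Re\psi$ strictly negative on the punctured imaginary axis, hence negative on a cone about it, while off that cone $|\Re z|\gtrsim|z|$ dominates the $O(|z|^2)$ remainder. Combining the two regimes yields $\Re\psi(z)\le\tfrac12|\Re z|$, so $\log|r(z)|=\Re z+\Re\psi(z)\le\tfrac12\Re z$ and $f\ge\tfrac12$ on a punctured neighbourhood of $0$, again contradicting $f(z_k)\to0$. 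Having excluded all three cases we conclude $\gamma(\xi)>0$, and with the upper bound this gives $\gamma(\xi)\in(0,1]$.
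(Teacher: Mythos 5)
Your overall architecture — reading the inequality off the definition of the infimum, monotonicity by inclusion of strips, the upper bound $\gamma(\xi)\le 1$ along the negative real axis, and a compactness/contradiction argument for strict positivity — is sound, and the easy ingredients as well as the limit cases $\Re z_*<0$ and $z_*=\mi y_*\neq 0$ are handled correctly. (For reference, the paper does not prove this lemma itself but cites \cite{BanLoSch,BanLo}, so the question is only whether your argument stands on its own.)

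It does not quite: the crux case $z_*=0$ contains a genuine error. You claim that Assumption~\ref{as:RK}(\ref{as:RK-b}) forces the leading homogeneous harmonic part of $\Re\psi$ to be strictly negative on the punctured imaginary axis. This fails whenever the leading index $m$ of $\psi(z)=a_m z^m+\cdots$ is odd: the Butcher coefficients are real, so $a_m$ is real and $\Re\bigl(a_m(\mi y)^m\bigr)=a_m y^m\Re(\mi^m)=0$. A concrete counterexample satisfying all of Assumption~\ref{as:RK} (and listed in the paper as an admissible method) is the two-stage Lobatto IIIC method, with $r(z)=1/(1-z+z^2/2)$: here $\log r(z)=z-z^3/6+O(z^4)$, so the leading harmonic part is $\Re(-z^3/6)=-x^3/6+xy^2/2$, which vanishes identically on $x=0$. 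Consistently, $|r(\mi y)|^2=1/(1+y^4/4)$: the strict decay on the axis guaranteed by Assumption~\ref{as:RK}(\ref{as:RK-b}) is produced by a \emph{higher-order} coefficient of $\psi$, not the leading one, so evenness of $|r(\mi y)|^2$ gives you no control of the leading harmonic part. With that, the cone construction and the bound $\Re\psi(z)\le\tfrac12|\Re z|$ are unsupported as argued.

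The case can be repaired, and in fact more simply — without using Assumption~\ref{as:RK}(\ref{as:RK-b}) at all. Keep your splitting $\log r(z)=z+\psi(z)$ with $\psi(z)=O(z^2)$, so that for $z=x+\mi y$,
\[
\partial_x\log|r(x+\mi y)|=\Re\Bigl(\frac{r'}{r}(x+\mi y)\Bigr)=1+\Re\psi'(x+\mi y)\ge\tfrac12,
\qquad |x|,|y|\le\delta,
\]
for $\delta$ small enough, since $\psi'(z)=O(z)$. Then for $-\delta\le x<0$, integrating in the horizontal direction from the imaginary axis and using $\log|r(\mi y)|\le 0$ (A-stability alone) gives
\[
\log|r(x+\mi y)|=\log|r(\mi y)|-\int_x^0\partial_s\log|r(s+\mi y)|\,ds\le-\tfrac12(0-x)=\tfrac12\,x,
\]
hence $f\ge\tfrac12$ on a punctured neighbourhood of $0$ in the strip, which is exactly the contradiction your argument needs at $z_*=0$. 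With this replacement, your proof is complete.
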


\begin{lemma}\label{lem:rbound}
There exist constants $\nu > 1$, $b > 0$ and  $C_{\qv} > 0$ such that
\[
|r(z)| \leq e^{\nu \Re z}, \qquad \text{for } 0 \leq \Re z \leq b,
\]
and
\[
\|{\bf q}(z)\| \leq C_{\qv}, \qquad \text{for } \Re z \leq b,
 \]
where $C_{\qv}$ depends on the choice of the norm $\|\cdot \|$.
\end{lemma}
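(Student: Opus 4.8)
The plan rests on one structural observation that serves both estimates. The poles of $r$ and of $\qv(z)=\bv^T(I-z\mathbf A)^{-1}$ are among the reciprocals of the eigenvalues of $\mathbf A$, and $A$-stability forces all of them into the open right half-plane: if $r$ had a pole with $\Re z\le 0$, the bound $|r(z)|\le 1$ on $\Re z\le 0$ would fail. Since there are finitely many such poles, there is a $\beta>0$ with every pole satisfying $\Re z\ge\beta$, so after fixing any $b\in(0,\beta)$ both $r$ and $\qv$ are holomorphic on the whole half-plane $\Re z\le b$. Moreover, expanding $(I-z\mathbf A)^{-1}=-z^{-1}\mathbf A^{-1}+O(|z|^{-2})$ (using invertibility of $\mathbf A$) shows $\qv(z)=O(|z|^{-1})$, while $r(z)=O(|z|^{-1})$ by \eqref{eq:r_decay}; both decays are uniform as $|z|\to\infty$ within $\Re z\le b$.

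Given this, the estimate on $\qv$ is immediate: a continuous function on the closed half-plane $\Re z\le b$ that vanishes at infinity is bounded, so one simply sets $C_\qv:=\sup_{\Re z\le b}\|\qv(z)\|<\infty$, the value depending on the chosen norm.

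For the bound on $r$ I would work directly on the strip $0\le\Re z\le b$ and split it by the size of the imaginary part. Near the origin I would Taylor expand the real logarithm in the real direction: for $z=x+\mi y$,
\[
\log|r(x+\mi y)|=\log|r(\mi y)|+x\,\Re\frac{r'(\mi y)}{r(\mi y)}+O(x^2),
\]
uniformly for $|y|\le\delta$ and $0\le x\le b$. Here $\log|r(\mi y)|\le 0$ by the strict stability bound in Assumption~\ref{as:RK}, while consistency (order $p\ge1$, so $r(z)=\e^{z}+O(z^{p+1})$) gives $r'/r=1+O(|y|^{p})$ on the axis. Choosing $\delta$ and then $b$ small enough, the right-hand side is at most $\nu x$ with $\nu=1+\varepsilon>1$. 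For $|y|\ge\delta$ I would instead use that the strict bound $|r(\mi y)|<1$ for $y\ne0$, together with the decay at infinity, yields a genuine gap $\sup_{|y|\ge\delta}|r(\mi y)|\le 1-\eta$; by continuity, uniform in $y$ thanks to the resolvent decay, this persists as $|r(x+\mi y)|\le 1-\eta/2<1\le\e^{\nu x}$ for $x\in[0,b]$ with $b$ small. Combining the two regions gives $|r(z)|\le\e^{\nu\Re z}$ throughout the strip; equivalently, the last step can be packaged as a three-lines (Phragm\'en--Lindel\"of) argument, since $|r|\le1$ on $\Re z=0$ and $|r|\le\e^{\nu b}$ on $\Re z=b$ and $r$ is bounded and holomorphic in between.

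The routine parts are the pole location and the $\qv$ bound; the crux is the matching of the two regimes in the $r$ estimate. The delicate point is that $|r|$ can exceed $1$ in the right half-plane near the origin (where $r\approx\e^{z}$) yet must stay below $\e^{\nu\Re z}$, which forces the admissible rate to be governed by the logarithmic derivative $\Re(r'/r)\to1$ at the origin. It is precisely the strict bound $|r(\mi y)|<1$ off the origin that supplies the gap away from the axis while remaining compatible with rate exactly $1$ in the limit $y\to0$, so that any $\nu>1$ works. Making every estimate uniform across the unbounded strip is what renders the decay \eqref{eq:r_decay} indispensable.
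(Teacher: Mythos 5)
Your proposal is essentially correct, and it is worth noting that the paper itself does not prove this lemma at all: it is quoted as a technical result from \cite{BanLoSch} and \cite{BanLo}, so your argument is a self-contained substitute rather than a variant of an in-paper proof. The core of your argument for the bound on $r$ --- splitting the strip $0\le\Re z\le b$ into a neighbourhood of the origin, where the Taylor expansion $\log|r(x+\mi y)|\le \log|r(\mi y)|+x\,\Re\bigl(r'(\mi y)/r(\mi y)\bigr)+O(x^2)\le x\,(1+C\delta^p+C'b)$ follows from A-stability ($\log|r(\mi y)|\le 0$) and consistency ($r'/r=1+O(|y|^p)$ on the axis), and the region $|y|\ge\delta$, where Assumption~\ref{as:RK}(b) together with the decay \eqref{eq:r_decay} gives a uniform gap $\sup_{|y|\ge\delta}|r(\mi y)|\le 1-\eta$ that persists for $0\le \Re z\le b$ by uniform continuity of the rational function $r$ on the strip --- is sound and delivers the statement with any fixed $\nu>1$ attainable for suitable $\delta,b$. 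The boundedness of $\qv$ from continuity on the closed half-plane plus its $O(|z|^{-1})$ decay is likewise fine.

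The one imprecise step is your pole-location claim. A-stability does force the poles of $r$ into the open right half-plane (your blow-up argument), but it does not by itself force the poles of $\qv(z)=\bv^T(I-z\Av)^{-1}$ there: a reciprocal eigenvalue $1/\lambda$ of $\Av$ with $\Re(1/\lambda)\le 0$ could be cancelled in the combination $1+z\,\bv^T(I-z\Av)^{-1}\bone$ while surviving in $\bv^T(I-z\Av)^{-1}$ (this happens when the left eigenvector of $\lambda$ is orthogonal to $\bone$ but the right eigenvector is not orthogonal to $\bv$, a configuration A-stability alone does not preclude). What you actually need --- and what the paper itself invokes without proof in Lemma~\ref{lem:qbound} --- is that the eigenvalues of $\Av$ have strictly positive real part, which holds for the Radau IIA and Lobatto IIIC families covered by Assumption~\ref{as:RK}. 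Granting that spectral fact, your constant $\beta$ and the choice of $b$ exist and the rest of your proof goes through verbatim; so this is a fixable misattribution of the reason, not a failure of the approach.
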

\begin{lemma}\label{lem:qbound}
There exists a constant $C_A > 0$ such that
\[
\max(r(z), \|{\bf q}(z)\|) \leq C_{A} |z|^{-1} \qquad \forall \Re z \leq 0.
  \]
\end{lemma}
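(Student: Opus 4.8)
The plan is to observe that both $r$ and the entries of $\qv(z)=\bv^T(I-z\mathbf{A})^{-1}$ are rational functions of $z$, and to recast the desired estimate as the statement that the rational functions $z\,r(z)$ and $z\,\qv(z)$ are bounded on the closed left half-plane $\{\Re z\le 0\}$. Once this is established, setting $C_A$ equal to the larger of the two bounds immediately gives $|r(z)|\le C_A|z|^{-1}$ and $\|\qv(z)\|\le C_A|z|^{-1}$ for every $z\ne 0$ with $\Re z\le 0$ (at $z=0$ the inequality is vacuous). The underlying principle is that a rational function is bounded on $\{\Re z\le 0\}$ as soon as it has no poles there and a finite limit at infinity: the set $\{\Re z\le 0\}\cup\{\infty\}$ is compact on the Riemann sphere, so a function continuous on it is automatically bounded.

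First I would verify analyticity in the closed left half-plane. The poles of $r$ and of $\qv$ are among the zeros of $\det(I-z\mathbf{A})$, a polynomial in $z$, hence finite in number. For $r$, $A$-stability gives $|r(z)|\le 1$ on $\Re z\le 0$, so $r$ has no poles there. For $\qv$, Lemma~\ref{lem:rbound} supplies the uniform bound $\|\qv(z)\|\le C_{\qv}$ on the half-plane $\Re z\le b$ with $b>0$; in particular $\qv$ has no poles on $\{\Re z\le 0\}$, which lies strictly inside this region. Multiplying by the entire factor $z$ creates no new poles, so $z\,r(z)$ and $z\,\qv(z)$ are analytic on $\{\Re z\le 0\}$.

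Next I would control the behaviour at infinity. For $r$ the decay $r(z)=O(z^{-1})$ is exactly \eqref{eq:r_decay}, a consequence of stiff accuracy, so $z\,r(z)$ has a finite limit as $|z|\to\infty$. For $\qv$ I would expand, using the invertibility of $\mathbf{A}$ (Assumption~\ref{as:RK}), $(I-z\mathbf{A})^{-1}=-z^{-1}\mathbf{A}^{-1}\bigl(I-z^{-1}\mathbf{A}^{-1}\bigr)^{-1}=-z^{-1}\mathbf{A}^{-1}+O(z^{-2})$ for large $|z|$, whence $\qv(z)=-z^{-1}\bv^T\mathbf{A}^{-1}+O(z^{-2})$ and $z\,\qv(z)\to-\bv^T\mathbf{A}^{-1}$. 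Thus both $z\,r(z)$ and $z\,\qv(z)$ extend continuously to the point at infinity.

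Combining the two steps, $z\,r(z)$ and $z\,\qv(z)$ are continuous on the compact set $\{\Re z\le 0\}\cup\{\infty\}$ and therefore bounded, which yields the claim. The only genuinely delicate point is the absence of poles of $\qv$ on the imaginary axis and throughout the left half-plane: in general this would force one to argue that the eigenvalues of the Butcher matrix $\mathbf{A}$ lie in the right half-plane, but here it is handed to us directly by the uniform bound in Lemma~\ref{lem:rbound}, so no separate spectral argument is required.
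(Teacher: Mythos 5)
Your proof is correct, and it takes a route that differs from the paper's in a meaningful way. The paper's (very terse) argument unifies both bounds through the resolvent of the Butcher matrix: using stiff accuracy it rewrites $r(z) = \bv^T\Av^{-1}(I-z\Av)^{-1}\bone$, notes $\qv(z) = \bv^T(I-z\Av)^{-1}$, and then invokes the fact that the eigenvalues of $\Av$ have strictly positive real part, so that $(I-z\Av)^{-1}$ is analytic on $\{\Re z \le 0\}$ and decays like $|z|^{-1}$ at infinity; both estimates then follow from a single bound on $\|z(I-z\Av)^{-1}\|$. You instead treat $r$ and $\qv$ separately and never touch the spectrum of $\Av$: analyticity of $r$ on the closed left half-plane comes from $A$-stability, analyticity of $\qv$ from the uniform bound $\|\qv(z)\|\le C_\qv$ for $\Re z \le b$ in Lemma~\ref{lem:rbound}, and the $O(z^{-1})$ behaviour at infinity from \eqref{eq:r_decay} and the Neumann-type expansion $(I-z\Av)^{-1} = -z^{-1}\Av^{-1} + O(z^{-2})$, after which compactness of $\{\Re z\le 0\}\cup\{\infty\}$ on the Riemann sphere finishes the job. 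The trade-off: the paper's proof is shorter, produces one constant governed directly by the resolvent of $\Av$ (consistent with the numerically computed values of $C_A$ in the remark that follows), but rests on a spectral fact about $\Av$ that is not among the items of Assumption~\ref{as:RK} and is not proved in the paper (it does hold for Radau IIA and Lobatto IIIC); your version stays entirely within the stated assumptions and the quoted technical lemmas, at the modest cost of leaning on the externally proved Lemma~\ref{lem:rbound} and of being longer. Both arguments are sound; yours is arguably the more self-contained relative to the paper's stated hypotheses.
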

\begin{proof}
  The estimate follows from  $r(z) = \bv^T\Av^{-1}(I-z\Av)^{-1}\bone$, $\mathbf{q}(z) = \bv^T(I-z\Av)^{-1}$, and the fact that the eigenvalues of $\Av$ have a strictly positive real part.
\end{proof}
\begin{remark}
For the backward Euler method, $C_A = 1$. For other RK methods the constant can be estimated numerically. We obtained that for the 2-stage Radau IIA method $C_A \approx 2.1213$, for the 3-stage Radau IIA $C_A \approx 3.479$, and for the 3-stage Lobatto IIIA $C_A \approx 3.6224$.
\end{remark}

In the next section we discuss how to approximate the integral
in~\eqref{omegaRK-int} by an efficient quadrature rule.

\section{Integral representation of the convolution quadrature weights}\label{sec:int}
We will follow the same idea as in the derivation of the real inversion formula for the Laplace transform in \cite[Section 10.7]{Henrici_II}, but with $\ev_n(hz)$ in place of $e^{zt}$. For the rest of the paper we will write
\begin{equation}
  \label{eq:d_defn}
  d = \|x\|.
\end{equation}
In the following results we will need some properties of the modified Bessel function $K_0$. First of all, by $K_0$ we mean the principal branch analytic in the cut complex plane $\mathbb{C} \setminus (-\infty, 0]$ as described in \cite[\S10.25]{NIST}. The  large argument behaviour is
    \begin{equation}
      \label{eq:K0_inf}
      \lim_{|z| \rightarrow \infty} \sqrt{z}e^z K_0(z) = \sqrt{\pi/2} \qquad |\arg(z)| \leq \pi;
    \end{equation}
see \cite[10.25.3]{NIST}. Whereas the small argument behaviour is
    \begin{equation}
      \label{eq:K0_log}
      \lim_{|z| \rightarrow 0}-K_0(z)/\log z = 1;
    \end{equation}
see \cite[10.25.3]{NIST}. From these two results it follows that there exists a constant $C_{K_0} > 0$ such that
\begin{equation}
  \label{eq:K0_bound}
  |K_0(z)| \leq C_{K_0} e^{-\Re z}|z|^{-1/2} \qquad |\arg(z)| < \pi.
\end{equation}

\begin{lemma}
For $n \geq 1$,
  \[
\omegav_n(x) = \frac{h}{2\pi} \int_{-\infty}^\infty \ev_n(-\mi hy) K(-\mi y,x)dy.
\]
\end{lemma}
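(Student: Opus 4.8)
The plan is to start from the integral representation \eqref{omegaRK-int},
\[
\omegav_n(x) = \frac{h}{2\pi \mi} \int_\Gamma K(z,x)\, \ev_n(hz)\, dz,
\]
and deform the contour $\Gamma$ onto the imaginary axis. The key observation is that \eqref{en-rk} gives $\ev_n(hz) = r(hz)^n \qv(hz)$, and for $n\ge 1$ the decay estimates of Lemma~\ref{lem:qbound} (giving $|r(hz)|,\|\qv(hz)\| = O(|hz|^{-1})$ as $|z|\to\infty$ with $\Re z \le 0$) together with the bound \eqref{eq:K0_bound} on $K_0$ — or the explicit exponential decay of $K(z,x)$ in the $D=1,3$ cases — ensure the integrand decays fast enough on large arcs in the left half-plane that the contour can be pushed leftward without picking up contributions at infinity. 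Since $A$-stability places all poles of $r$ in the open right half-plane, and the remaining factor $K(z,x)$ is analytic on $\bC\setminus(-\infty,0]$, the only obstruction to deforming all the way onto the imaginary axis is the branch cut of $K$ along $(-\infty,0]$.

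First I would justify the contour deformation onto the imaginary axis: for $n\ge 1$ the factor $r(hz)^n$ provides at least one power of decay, and I would verify that the combined decay of $r(hz)^n \qv(hz) K(z,x)$ is integrable and that the arcs closing the contour at infinity vanish. This uses Lemma~\ref{lem:qbound} for the Runge--Kutta factors and the growth/decay of $K(z,x)$ from \eqref{transfer_schrodinger} and \eqref{eq:K0_bound}. Once the contour is the imaginary axis, I parametrize $z = -\mi y$ with $y$ running over $(-\infty,\infty)$, so $dz = -\mi\, dy$, and the prefactor $\frac{h}{2\pi\mi}$ combines with $-\mi$ to give $\frac{h}{2\pi}$. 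This yields directly
\[
\omegav_n(x) = \frac{h}{2\pi} \int_{-\infty}^\infty \ev_n(-\mi h y)\, K(-\mi y, x)\, dy,
\]
which is the claimed identity.

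The step I expect to be the main obstacle is the careful treatment of the branch cut and the behaviour near $z=0$. The transfer operator $K(z,x)$ is singular or has a branch point at the origin (e.g.\ the $1/\sqrt{z/\mi}$ factor in $D=1$, or the logarithmic $K_0$ singularity \eqref{eq:K0_log} in $D=2$), so when deforming $\Gamma$ onto the imaginary axis one must check that the contribution from a small arc around the origin vanishes and that the resulting improper integral along the imaginary axis converges near $y=0$. For $D=1$ the integrand behaves like $|y|^{-1/2}$ near the origin, which is integrable; for $D=2$ the logarithmic singularity is also integrable; so in each case the limiting contour picks up no residual boundary term from the cut. I would make these local integrability estimates explicit, using \eqref{def_sqrt} to fix the value of $\sqrt{z/\mi}$ on the two sides of the imaginary axis, and confirm that the left-half-plane analyticity of $\ev_n$ means no poles are crossed during the deformation.
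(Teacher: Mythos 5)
Your overall skeleton matches the paper's proof: start from \eqref{omegaRK-int}, deform onto the imaginary axis, and check decay at infinity plus integrability at the branch point. However, you have the geometry of the deformation backwards, and this is not cosmetic. The contour $\Gamma$ in \eqref{omegaRK-int} encloses the poles of $\ev_n(hz)$, which all lie in the open \emph{right} half-plane (they sit at $z=1/(h\lambda_i)$ where $\lambda_i$ are the eigenvalues of $\Av$, which have positive real part). Deforming $\Gamma$ onto the imaginary axis therefore sweeps a region of the \emph{right} half-plane, closed off by large arcs with $\Re z\ge 0$; the left half-plane, and in particular the branch cut $(-\infty,0]$, is never entered, so the cut is not ``the obstruction'' here at all --- wrapping the contour around the cut is the content of Theorem~\ref{thm:contour}, not of this lemma. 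Consequently the bounds you need are bounds valid for $\Re z\ge 0$, and this is exactly the paper's one-line observation: for $\Re z\ge 0$ one has $\Re\bigl(e^{-\mi\pi/4}\sqrt{z}\bigr)\ge 0$, whence $|K(z,x)|\le\tfrac12|z|^{-1/2}$ for $D=1$, with analogous algebraic bounds for $D=2$ (via \eqref{eq:K0_bound}) and $D=3$; combined with $\|\ev_n(hz)\|\le C_A^{n+1}|hz|^{-n-1}$ this kills the arcs.

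The estimates you invoke instead are either inapplicable or false in the region where you place them. Lemma~\ref{lem:qbound} is stated for $\Re z\le 0$ and does not cover arcs in the right half-plane (there one uses the decay of the rational functions $r$ and $\qv$ at infinity, cf.\ \eqref{eq:r_decay}, which is a separate, if easy, statement). More seriously, your claim of ``explicit exponential decay of $K(z,x)$'' on large arcs in the left half-plane for $D=1,3$ is wrong: by \eqref{def_sqrt}, $\Re\bigl(e^{-\mi\pi/4}\sqrt{z}\bigr)$ is \emph{negative} for $\arg z\in(-\pi,-\pi/2)$, so $K(z,x)$ grows exponentially as the cut is approached from below --- this asymmetry between the two sides of the cut is precisely what Lemma~\ref{lemma:kernel_bound} and the factor $e^{d\sqrt{\xi/2}}$ in Proposition~\ref{prop:truncation} quantify. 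A deformation run through the left half-plane as you describe would therefore fail. Two smaller points: the only delicate spot on the target contour is the branch point at $z=0$, where your integrability remarks ($|y|^{-1/2}$ for $D=1$, logarithmic for $D=2$) are correct and are indeed needed; and your sign bookkeeping is off --- with $z=-\mi y$ and $y$ increasing (the axis traversed downward) the prefactor becomes $\frac{h}{2\pi\mi}\cdot(-\mi)=-\frac{h}{2\pi}$, so the stated formula corresponds to the upward traversal of the axis, i.e.\ you must fix the orientation of $\Gamma$ consistently.
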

\begin{proof}
  Note that for $\Re z \geq 0$, $\Re \sqrt{z}e^{-\mi\pi/4} \geq 0$ and hence $|K(z,x)| \leq \frac12 |z|^{-1/2}$ for $D = 1$, $|K(z,x)| \leq C_{K_0}|z|^{-1/2}$ for $D = 2$ and $|K(z,x)| \leq \frac1{4\pi d}$ for $D = 3$. As $\|\ev_n(z)\| \leq C_A^{n+1}|z|^{-n-1}$ we can deform the contour as required by the statement of the lemma.
\end{proof}

\begin{lemma}\label{lemma:kernel_bound}
  Given $d>0$ and $\xi > 0$
$$
f_\pm(y) = e^{-d\Re\left(e^{-\mi\pi/4}\sqrt{-\xi\pm\mi y}\right)}
$$
are decreasing functions of $y \geq 0$.
\end{lemma}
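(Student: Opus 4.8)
The plan is to reduce the claim to an explicit elementary formula for the exponent. Since $d>0$, the function $f_\pm$ is decreasing on $[0,\infty)$ if and only if
\[
g_\pm(y) := \Re\!\left(e^{-\mi\pi/4}\sqrt{-\xi\pm\mi y}\right)
\]
is increasing there, so it suffices to compute $g_\pm$ and check its monotonicity. First I would obtain a closed form for $g_\pm$ by extracting real and imaginary parts of the square root.

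To do this, write $\sqrt{-\xi\pm\mi y} = a+\mi b$, where $a=\Re\sqrt{-\xi\pm\mi y}>0$ by the positive–real–part convention in \eqref{def_sqrt}. Squaring and taking modulus give the three relations $a^2-b^2 = -\xi$, $2ab = \pm y$, and $a^2+b^2 = \sqrt{\xi^2+y^2}$. Using $e^{-\mi\pi/4}=(1-\mi)/\sqrt2$ one finds $g_\pm = (a+b)/\sqrt2$, and from the first and third relations
\[
(a+b)^2 = (a^2+b^2)+2ab = \sqrt{\xi^2+y^2}\pm y .
\]
It remains only to fix the sign of $a+b$. The relation $a^2-b^2=-\xi<0$ forces $|b|>a$, so the sign of $a+b$ coincides with the sign of $b$, which by $2ab=\pm y$ (with $a>0$) is exactly $\pm$. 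Hence
\[
g_\pm(y) = \pm\,\frac{1}{\sqrt2}\,\sqrt{\sqrt{\xi^2+y^2}\pm y}\,.
\]

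With this formula the monotonicity is immediate. In the $+$ case the radicand $\sqrt{\xi^2+y^2}+y$ is manifestly increasing in $y\ge0$, so $g_+$ increases and $f_+$ decreases. In the $-$ case the radicand $\sqrt{\xi^2+y^2}-y$ has derivative $y/\sqrt{\xi^2+y^2}-1<0$, so it decreases, its square root decreases, and the overall minus sign turns $g_-$ into an increasing function; thus $f_-$ decreases as well.

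The hard part is the branch bookkeeping in the $-$ case: getting the sign of $a+b$ wrong would make $g_-$ appear to decrease and the statement would fail. The key observation that resolves it cleanly is that $a^2-b^2=-\xi<0$ already guarantees $|b|>a$, so the sign of $a+b$ is dictated entirely by $b$ and hence by $\pm y$. Once the explicit expression for $g_\pm$ is in hand, everything else is elementary calculus, and the degenerate endpoint $y=0$ (where $b=0$) follows by continuity.
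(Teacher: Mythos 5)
Your proof is correct and is essentially the paper's argument: both reduce the lemma to the explicit closed form $\Re\left(e^{-\mi\pi/4}\sqrt{-\xi\pm\mi y}\right)=\pm\frac{1}{\sqrt{2}}\sqrt{\sqrt{\xi^2+y^2}\pm y}$ and then check monotonicity of the radicand by elementary calculus; the paper derives this formula via polar form and half-angle identities, whereas you derive it by Cartesian sign bookkeeping with $a^2-b^2$, $2ab$, $a^2+b^2$, which is only a cosmetic difference (and arguably cleaner, since it makes the branch/sign issue in the $-$ case explicit). One trivial slip: at the endpoint $y=0$ it is $a$ that vanishes, not $b$ (the relation $a^2-b^2=-\xi<0$ forces $b\neq 0$), but your continuity remark handles that point regardless.
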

\begin{proof}
Note that it is sufficient to show that $g_{\pm}(y)$ decreases with $g_{\pm}$ the real part of the exponent:
\[
 g_{\pm}(y) := -d\Re (e^{-\mi \pi/4} \sqrt{-\xi\pm\mi y}).
\]
 Let us first consider $g_{-}(y)$:
\[
 g_{-}(y) = -d|\xi+\mi y|^{1/2}\cos(\arg(-\xi-\mi y)/2-\pi/4).
\]
Notice that the cosine above is negative. Then by using the half-angle formula for the cosine we obtain
\[
\begin{split}
g_{-}(y)&= d|\xi+\mi y|^{1/2} \sqrt{\frac{1+\cos(\arg(-\xi-\mi y)-\pi/2)}{2}}\\
& = d|\xi+\mi y|^{1/2} \sqrt{\frac{1-\sin(\arg(-\xi-\mi y))}{2}}\\
& =  d(\xi^2 + y^2)^{1/4} \sqrt{\frac{1-\frac{y/\xi}{\sqrt{1+y^2/\xi^2}}}{2}}
 = \frac{d}{\sqrt{2}} \sqrt{(\xi^2 + y^2)^{1/2}-y}.
\end{split}
\]
As $g'_{-}(y) = -\frac{d}{2\sqrt{2}}\sqrt{(\xi^2+y^2)^{1/2}-y}/\sqrt{\xi^2+y^2} \leq 0$, the function $f_{-}(y)$ decreases as $y$ increases.

For the exponent of $f_{+}(y)$ we obtain
\[
  \begin{split}
g_{+}(y) & = -d|-\xi+\mi y|^{1/2}\cos(\arg(-\xi+\mi y)/2-\pi/4)\\
& = -d|\xi+\mi y|^{1/2} \sqrt{\frac{1-\sin(\arctan(-y/\xi))}{2}}\\
& = -d(\xi^2 + y^2)^{1/4}\sqrt{\frac{1+\frac{y/\xi}{\sqrt{1+y^2/\xi^2}}}{2}}
\\& =-\frac{d}{\sqrt{2}} \sqrt{(\xi^2 + y^2)^{1/2}+y}.
\end{split}
\]
Clearly this function is decreasing, hence again $f_{+}(y)$ decreases as $y$ increases.
\end{proof}

\begin{theorem}\label{thm:contour}
The weights are given by
\begin{equation}\label{omega-newint}
  \begin{split}
  \omegav_n(x) = &{h\over 2\pi \mi}
    \int_{\Gamma_+ \cup \Gamma_-}  K(z,x)\ev_n(hz)\, dz \\ &+ {h\over 2\pi \mi}\int_0^{\xi} \left( K(\lambda e^{-i\pi},x)- K(\lambda e^{i\pi},x)\right)\ev_n(-h\lambda)\,d\lambda,
    \end{split}
\end{equation}
where
\begin{equation}\label{gammas1}
\Gamma_{\pm }=\{-\xi\pm \mi y: y>0\}.
\end{equation}
Further, the function  $G(\lambda,x)  := K(\lambda e^{-i\pi},x)- K(\lambda e^{i\pi},x)$ is given by
\[
G(\lambda,x) =
\left\{
  \begin{array}{cc}
    \displaystyle\frac{\mi e^{\mi \pi/4}}{\sqrt{\lambda}}\cosh(|x| e^{i\pi/4}\sqrt{\lambda})& D = 1,\\[1em]
        \displaystyle\frac{\mi}{2}J_0(  e^{-i\pi/4}\sqrt{\lambda}\|x\|)& D = 2,\\[1em]
    \displaystyle\frac{1}{2\pi \|x\|}\sinh(\|x\| e^{i\pi/4}\sqrt{\lambda})& D = 3.
  \end{array}
\right.
\]
\end{theorem}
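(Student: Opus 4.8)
The plan is to start from the imaginary-axis representation of the weights established in the lemma just above, namely
\[
\omegav_n(x) = \frac{h}{2\pi}\int_{-\infty}^\infty \ev_n(-\mi hy)\,K(-\mi y,x)\,dy = \frac{h}{2\pi\mi}\int_{\mi\bR} K(z,x)\,\ev_n(hz)\,dz,
\]
with the imaginary axis oriented upward, and to push the contour leftwards onto the vertical line $\Re z=-\xi$. The integrand $K(z,x)\ev_n(hz)$ is analytic in the strip $-\xi\le\Re z\le 0$ except on the segment $[-\xi,0]$ of the negative real axis, which is the branch cut of $K(\cdot,x)$: recall that $\ev_n(hz)=r(hz)^n\qv(hz)$ has all its poles in the right half-plane, since the eigenvalues of $\Av$ have positive real part. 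Cauchy's theorem applied to the rectangle $[-\xi,0]\times[-R,R]$, modified by a keyhole excising a shrinking neighbourhood of the cut, therefore expresses the imaginary-axis integral as the sum of the integrals over $\Gamma_\pm$ and the contributions of the two banks of the cut. Across the cut $\ev_n$ is continuous and equals $\ev_n(-h\lambda)$, while the jump of $K$ is exactly $G(\lambda,x)=K(\lambda e^{-\mi\pi},x)-K(\lambda e^{\mi\pi},x)$; this produces the second integral in \eqref{omega-newint}.

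The analytic heart of the argument is to show that the horizontal connecting segments at $\Im z=\pm R$ contribute nothing as $R\to\infty$ and that the integrals along $\Gamma_\pm$ converge. Here I would combine the decay $\|\ev_n(hz)\|\le C_A^{\,n+1}(h|z|)^{-n-1}$ from Lemma~\ref{lem:qbound} (which is $O(R^{-2})$ for $n\ge1$) with the fact that the relevant exponential factor of $K(z,x)$ stays bounded on the closed left half-plane: for $\Re z\le 0$ one has $\Re\sqrt{z/\mi}\ge 0$, so $|K(z,x)|$ is bounded (by a constant or $|z|^{-1/2}$ for $D=1,3$, and via \eqref{eq:K0_bound} for $D=2$). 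Convergence along $\Gamma_\pm$ then follows from Lemma~\ref{lemma:kernel_bound}, which shows that the modulus of the exponential factor of $K$ decreases along the lines $\{-\xi\pm\mi y\}$, pitted against the algebraic decay of $\ev_n$. Finally, the small circle about the branch point $z=0$ is harmless: $\ev_n(0)=\bv^T$ is bounded while $K(z,x)$ has only an integrable singularity there ($|z|^{-1/2}$, $\log|z|$, or bounded for $D=1,2,3$), so its contribution vanishes as the radius tends to $0$. Tracking the orientation of the keyhole against that of the imaginary axis fixes the overall signs and yields \eqref{omega-newint}.

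It then remains to evaluate $G(\lambda,x)$ in each dimension using $\sqrt{z/\mi}=e^{-\mi\pi/4}\sqrt z$ together with $\sqrt{\lambda e^{\pm\mi\pi}}=e^{\pm\mi\pi/2}\sqrt\lambda=\pm\mi\sqrt\lambda$, so that on the two banks of the cut $\sqrt{z/\mi}=e^{\mi\pi/4}\sqrt\lambda$ and $\sqrt{z/\mi}=e^{-3\mi\pi/4}\sqrt\lambda=-e^{\mi\pi/4}\sqrt\lambda$. For $D=1,3$ the two exponentials $\exp(\mp|x|e^{\mi\pi/4}\sqrt\lambda)$ then combine, after accounting for the prefactor sign (note $\mi e^{\mi\pi/4}=-e^{-\mi\pi/4}$), into the stated $\cosh$ and $\sinh$ expressions. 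For $D=2$ I would apply the connection formula $K_0(we^{-\mi\pi})=K_0(w)+\mi\pi I_0(w)$ with $w=\|x\|e^{\mi\pi/4}\sqrt\lambda$, giving $G=\tfrac{1}{2\pi}\bigl(K_0(we^{-\mi\pi})-K_0(w)\bigr)=\tfrac{\mi}{2}I_0(w)$, and then use $I_0(w)=J_0(-\mi w)=J_0(e^{-\mi\pi/4}\sqrt\lambda\,\|x\|)$ to recover the Bessel-$J_0$ form.

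I expect the main obstacle to be not the jump bookkeeping, which is routine once the branch conventions are fixed, but rather the uniform control of $K(z,x)$ needed to legitimise the deformation simultaneously on the arcs at infinity and along $\Gamma_\pm$; this is exactly what Lemmas~\ref{lemma:kernel_bound}, \ref{lem:rbound} and \ref{lem:qbound} are designed to supply, with the $D=2$ case additionally relying on the uniform bound \eqref{eq:K0_bound}. A secondary subtlety is to confirm that the keyhole around $z=0$ contributes nothing despite the singularity of $K$ there, which again reduces to the integrability estimates above.
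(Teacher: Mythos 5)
Your contour-deformation strategy is the same as the one in the paper's proof: start from the imaginary-axis representation in the preceding lemma, push the contour onto $\Gamma_{\pm}$ together with a shrinking keyhole around the cut $[-\xi,0]$, and read off the jump of $K$ across the cut. Your evaluations of $G(\lambda,x)$ are correct in all three dimensions; for $D=2$ your route via the connection formula $K_0(we^{-\mi\pi})=K_0(w)+\mi\pi I_0(w)$ followed by $I_0(w)=J_0(-\mi w)$ is equivalent to the paper's direct application of the $K_0$--$J_0$ formula \cite[10.27.9]{NIST}.

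There is, however, one false claim in your justification of the deformation: that $\Re\sqrt{z/\mi}\ge 0$ whenever $\Re z\le 0$, so that $|K(z,x)|$ is bounded by $|z|^{-1/2}$ (or a constant, for $D=3$) on the closed left half-plane. With the branch $\sqrt{z/\mi}=e^{-\mi\pi/4}\sqrt z$ this is true on the right half-plane and on the upper-left quadrant, but it fails on the lower-left quadrant: there $\arg\sqrt{z/\mi}\in(-3\pi/4,-\pi/2)$, so $\Re\sqrt{z/\mi}<0$ and the factor $e^{-d\Re\sqrt{z/\mi}}$ exceeds $1$; on the lower bank of the cut it equals $e^{d\sqrt{\lambda/2}}$, and it is exactly this growth that produces the $\cosh(d\sqrt{\xi/2})$ in Proposition~\ref{prop:truncation}, whose proof bounds $|K(-\xi-\mi y,x)|$ by $\tfrac12(\xi^2+y^2)^{-1/4}e^{+d\sqrt{\xi/2}}$ on $\Gamma_{-}$ but by $\tfrac12(\xi^2+y^2)^{-1/4}e^{-d\sqrt{\xi/2}}$ on $\Gamma_{+}$. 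The claim as you state it would thus fail precisely where you invoke it (on $\Gamma_{-}$, on the lower bank, and on the lower horizontal connector). The repair is immediate and uses a tool you already cite: apply Lemma~\ref{lemma:kernel_bound} with an arbitrary abscissa $\sigma\in(0,\xi]$ in place of $\xi$. Monotonicity in $y$ gives, uniformly in the strip $-\xi\le\Re z\le 0$, the bound $e^{-d\Re(e^{-\mi\pi/4}\sqrt{-\sigma\pm\mi y})}\le e^{d\sqrt{\sigma/2}}\le e^{d\sqrt{\xi/2}}$, which is independent of $R$. Combined with $\|\ev_n(hz)\|\le C_A^{n+1}(h|z|)^{-n-1}$ from Lemma~\ref{lem:qbound}, this still annihilates the horizontal segments at $\Im z=\pm R$ as $R\to\infty$ and yields absolute convergence of the integrals over $\Gamma_{\pm}$ for $n\ge 1$. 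With that substitution, the rest of your argument (including the branch point, where $\ev_n(0)=\bv^T$ and $K$ has only an integrable singularity) is sound and the proof matches the paper's.
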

\begin{proof}
We first deform the integration contour from the imaginary axis to the contour described in Figure~\ref{fig:contour}. Letting $R \rightarrow \infty$ and $\delta \rightarrow 0$ and using the estimate for $K(z)$ from the previous lemma for $D = 1$ and $D = 2$ and the bound on $e_n(z)$ as before we obtain the required expression \eqref{omega-newint}. As \eqref{eq:K0_bound} shows that the kernel in $D = 2$ dimensions is bounded up to a constant by the same expression as in $D = 1$ dimension, the same argument works here too.

  To derive the simplified expression for $G(\lambda,x)$ for the 3D Schr\"odinger equation note that
\begin{eqnarray*}
K(\lambda e^{-i\pi}, x)- K(\lambda e^{i\pi}, x)&=& \frac1{4\pi d}\left[\exp(-d e^{-i3\pi/4}\sqrt{\lambda})- \exp(-d e^{i\pi/4}\sqrt{\lambda})\right]\\
&=& \frac1{4\pi d}\left[\exp(d e^{i\pi/4}\sqrt{\lambda})- \exp(-d e^{i\pi/4}\sqrt{\lambda})\right] \\
&=& \frac1{2\pi d}\sinh(d e^{i\pi/4}\sqrt{\lambda}).
\end{eqnarray*}
Similarly in 1D we have
\begin{eqnarray*}
K(\lambda e^{-i\pi}, x)- K(\lambda e^{i\pi}, x)&=& \frac{e^{\mi \pi/4}}{2\sqrt{\lambda}e^{-\mi\pi/2}}\exp(-d e^{-i3\pi/4}\sqrt{\lambda})- \frac{e^{\mi \pi/4}}{2\sqrt{\lambda}e^{\mi\pi/2}}\exp(-d e^{i\pi/4}\sqrt{\lambda})\\
&=& \frac{\mi e^{\mi \pi/4}}{2\sqrt{\lambda}}\left[\exp(d e^{i\pi/4}\sqrt{\lambda})+ \exp(-d e^{i\pi/4}\sqrt{\lambda})\right] \\
&=& \frac{\mi e^{\mi \pi/4}}{\sqrt{\lambda}}\cosh(d e^{i\pi/4}\sqrt{\lambda}).
\end{eqnarray*}

In 2D we have
\begin{align*}
  K(\lambda e^{-i\pi}, x)- K(\lambda e^{i\pi}, x)&=
\frac1{2\pi}\left[K_0(e^{-\mi \pi/2}d\sqrt{\lambda} e^{-i\pi/4})- K_0(e^{\mi \pi/2}d\sqrt{\lambda} e^{-i\pi/4})\right]\\
&= \frac{\mi}{2} J_0(d\sqrt{\lambda} e^{-i\pi/4}),
\end{align*}
where in the last step we used \cite[10.27.9]{NIST}.
\end{proof}

  \begin{figure}
    \centering
    \includegraphics[width=.8\textwidth]{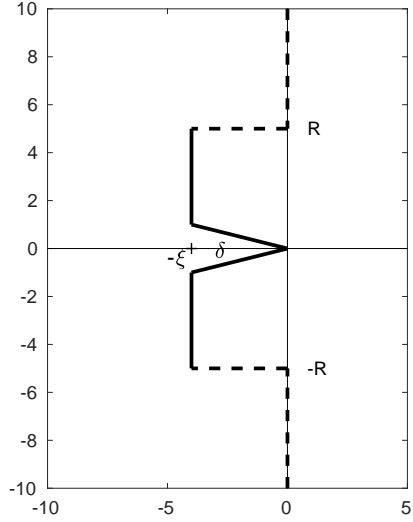}
    \caption{The contour in the proof of Theorem~\ref{thm:contour} is the  union of the dashed and solid lines. The angle at the negative real axis is denoted by $\delta > 0$ and $-\xi$ is the real part of the solid vertical lines.}
    \label{fig:contour}
  \end{figure}

\section{Quadrature for the convolution weights}\label{sec:quad}

In this section we develop an efficient quadrature for the approximation of the convolution quadrature weights, written as \eqref{omega-newint}. Let  $T$ be the final time, $h=T/N$ the time-step  and $N$ the total number of time steps in our approximation (see \eqref{rk-cq}).  Our goal is to use the same quadrature weights and nodes for the approximation of $\omegav_n(x)$ for all $n_0 < n \le N$, for some $n_0 \ll N$ to be determined, and all $0\le x \le L$, for $L$ the maximal distance between the points $x_j$ in \eqref{conschrodinger}.

In the first place we will bound the size of the contribution along the vertical lines $\Gamma_{\pm}$, showing that in some cases it can be neglected. Thus we will approximate
\begin{equation}\label{int0xi}
\omegav_n(x) \approx \Iv_n:= \frac{h}{2\pi\opi} \int_0^{\xi} G(\lambda,x)\ev_n(-h\lambda)\,d\lambda.
\end{equation}

For the rest of the paper we restrict ourselves to $D = 1$. In view of  the very similar expression for the 3D kernel and the bound \eqref{eq:K0_bound} in 2D, we expect that similar estimates will hold also in higher dimensions. The single bigger change would be the treatment of the singularity at the origin.

\subsection{Truncation to a finite interval}
We start by bounding the contribution to the integral along the vertical semilines, i.e. the error in \eqref{int0xi}.

\begin{proposition}\label{prop:truncation}
  We have the bound
 \begin{equation}
   \label{eq:truncation_en}
   \begin{split}
     \left\|  \omegav_n(x)-\Iv_n \right\| \leq &  \frac{h}{4\pi} \int_0^\infty \|\ev_n(h(\xi+\mi y))\| e^{-d \Re(e^{-\frac{\mi\pi}{4}}\sqrt{-\xi+\mi y})}(\xi^2+y^2)^{-\frac14}dy\\
     &+  \frac{h}{4\pi} \int_0^\infty \|\ev_n(h(\xi-\mi y))\| e^{-d \Re(e^{-\frac{\mi\pi}{4}}\sqrt{-\xi-\mi y})}(\xi^2+y^2)^{-\frac14}dy.
        \end{split}
      \end{equation}
or more explicitly
 \begin{equation}
   \label{eq:truncation}
   \left\|  \omegav_n(x)-\Iv_n \right\| \leq   C_A\frac{\sqrt{2\pi/\xi}}{2\Gamma(3/4)^2}\cosh(d\sqrt{\xi/2})e^{-\gamma(h\xi)t_n\xi}.
   \end{equation}
\end{proposition}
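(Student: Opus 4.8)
The starting point is Theorem~\ref{thm:contour}: subtracting the branch-cut integral $\Iv_n$ of \eqref{int0xi} from the full representation \eqref{omega-newint} leaves exactly the contribution of the two vertical semilines, so that
\[
\omegav_n(x)-\Iv_n = \frac{h}{2\pi\mi}\int_{\Gamma_+\cup\Gamma_-}K(z,x)\ev_n(hz)\,dz.
\]
I would parametrize $\Gamma_\pm$ by $z=-\xi\pm\mi y$, $y>0$, so that $|dz|=dy$ and $|z|=(\xi^2+y^2)^{1/2}$. Taking norms and inserting the explicit $D=1$ kernel $K(z,x)=\frac{1}{2}e^{\mi\pi/4}z^{-1/2}\exp(-d\,e^{-\mi\pi/4}\sqrt z)$, whose modulus on the lines is $|K(z,x)|=\frac{1}{2}(\xi^2+y^2)^{-1/4}e^{-d\Re(e^{-\mi\pi/4}\sqrt{-\xi\pm\mi y})}$, reproduces precisely the integrand of \eqref{eq:truncation_en}; this establishes the first, implicit bound.

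To pass to the explicit estimate \eqref{eq:truncation} I would bound the two factors in $\ev_n(hz)=r(hz)^n\qv(hz)$ separately. On both contours $\Re(hz)=-h\xi$, so $hz$ lies in the strip $-h\xi\le\Re(hz)\le0$, and Lemma~\ref{lemma:gamma} applied with parameter $h\xi$ gives $|r(hz)|\le e^{-\gamma(h\xi)h\xi}$; raising this to the $n$-th power and using $t_n=nh$ produces the decisive decay factor $|r(hz)|^n\le e^{-\gamma(h\xi)\xi t_n}$, uniformly in $y$. For the second factor, Lemma~\ref{lem:qbound} yields $\|\qv(hz)\|\le C_A|hz|^{-1}=C_A h^{-1}(\xi^2+y^2)^{-1/2}$. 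The $h^{-1}$ cancels the prefactor $h/(4\pi)$, and combining with the $(\xi^2+y^2)^{-1/4}$ coming from $|K|$ leaves the convergent algebraic weight $(\xi^2+y^2)^{-3/4}$.

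The remaining, and only genuinely nontrivial, step is to decouple the exponential kernel from the algebraic weight. Here I would invoke Lemma~\ref{lemma:kernel_bound}: since $f_\pm(y)=e^{-d\Re(e^{-\mi\pi/4}\sqrt{-\xi\pm\mi y})}$ decreases for $y\ge0$, each exponential is bounded by its value at $y=0$, namely $f_\pm(0)=e^{\mp d\sqrt{\xi/2}}$. Summing the $\Gamma_+$ and $\Gamma_-$ contributions turns $e^{-d\sqrt{\xi/2}}+e^{d\sqrt{\xi/2}}$ into $2\cosh(d\sqrt{\xi/2})$, and the residual integral is evaluated in closed form by the substitution $y=\xi u$:
\[
\int_0^\infty(\xi^2+y^2)^{-3/4}\,dy = \xi^{-1/2}\int_0^\infty(1+u^2)^{-3/4}\,du = \xi^{-1/2}\,\frac{\sqrt\pi\,\Gamma(1/4)}{2\,\Gamma(3/4)},
\]
a Beta integral which, via the reflection identity $\Gamma(1/4)\Gamma(3/4)=\pi\sqrt2$, produces the factor $\sqrt{2\pi/\xi}\,\Gamma(3/4)^{-2}$. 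Collecting all constants then yields the bound \eqref{eq:truncation}. I expect the main difficulty to be pure bookkeeping rather than ideas: one must keep $\gamma$ evaluated at the scaled argument $h\xi$ (not $\xi$) when extracting $e^{-\gamma(h\xi)\xi t_n}$, and carefully track the numerical constant through the Beta-function evaluation. The essential structural point is that the monotonicity supplied by Lemma~\ref{lemma:kernel_bound} is what reduces the final estimate to an elementary integral instead of a nonelementary, oscillatory one.
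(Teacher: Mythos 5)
Your proof is correct and follows essentially the same route as the paper's: identify $\omegav_n(x)-\Iv_n$ with the $\Gamma_\pm$ integrals from Theorem~\ref{thm:contour}, bound $\ev_n(hz)=r(hz)^n\qv(hz)$ via Lemma~\ref{lemma:gamma} (with $\gamma$ evaluated at $h\xi$, exactly as the paper does) and Lemma~\ref{lem:qbound}, use the monotonicity of Lemma~\ref{lemma:kernel_bound} to bound the exponential factor by its value at $y=0$, giving $e^{\pm d\sqrt{\xi/2}}$ and hence the $\cosh$ after summing the two semilines, and finish with the Beta-integral evaluation of $\int_0^\infty(1+u^2)^{-3/4}\,du$. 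The only discrepancy is harmless bookkeeping: your careful evaluation actually yields a constant $\frac{C_A\sqrt{2\pi/\xi}}{4\Gamma(3/4)^2}$, a factor $2$ smaller than in \eqref{eq:truncation} (the paper's own intermediate step loses this factor), so your argument proves a slightly sharper bound that still implies the stated one.
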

\begin{proof}
To prove the result we need to bound the integrals over $\Gamma_+ \cup \Gamma_-$ in Theorem~\ref{thm:contour}. The estimate \eqref{eq:truncation_en} follows directly from the definition of the integrand.

 Note that Lemma~\ref{lemma:kernel_bound} implies
\[
|K(-\xi-\mi y,x)| \leq \frac12 (\xi^2+y^2)^{-1/4}e^{d\sqrt{\xi/2}}
\]
and
\[
|K(-\xi+\mi y,x)| \leq    \frac12 (\xi^2+y^2)^{-1/4}e^{-d\sqrt{\xi/2}}.
\]
We require a bound on $\ev_n(h(-\xi\pm \mi y))$ which follows from $\ev_n(z) = r(z)^n \qv(z)$, Lemma~\ref{lemma:gamma} and Lemma~\ref{lem:qbound}
\[
\|\ev_n(h(-\xi\pm \mi y))\|  \leq C_Ae^{-\gamma(h\xi)t_n\xi}\frac{h^{-1}}{(\xi^2+y^2)^{1/2}}.
\]

Hence
\[
  \begin{split}
  \left|\frac{h}{2\pi \mi}\int_{\Gamma_-}  K(z,x)\ev_n(hz)\, dz\right|
  &\leq  C_A\frac{1}{4\pi}e^{d\sqrt{\xi/2}-\gamma(h\xi)t_n\xi} \int_0^\infty(\xi^2+y^2)^{-3/4} dy
  \\
  &= C_A\frac{1}{4\pi\xi^{1/2}}e^{d\sqrt{\xi/2}-\gamma(h\xi)t_n\xi} \int_0^\infty(1+y^2)^{-3/4} dy\\
  &= C_A\frac{\sqrt{2\pi/\xi}}{4\Gamma(3/4)^2}e^{d\sqrt{\xi/2}-\gamma(h\xi)t_n\xi}.
    \end{split}
  \]
Similarly
 \[
  \left|\frac{h}{2\pi \mi}\int_{\Gamma_+}  K(z,x)\ev_n(hz)\, dz\right|
  \leq  C_A\frac{\sqrt{2\pi/\xi}}{4\Gamma(3/4)^2}e^{-d\sqrt{\xi/2}-\gamma(h\xi)t_n\xi}.
\]
\end{proof}

The following corollary will be used to determine the value of $n_0$. More details are given in Section~\ref{sec:param}.
\begin{corollary}\label{coro:trunc}
  For a given $\varepsilon > 0$ and $\xi > 1$
\[
\left|  \omegav_n(x)-\Iv_n \right| \leq \varepsilon
\]
if
\[
n \geq \frac1{h\xi \gamma(h\xi)}\left(d \sqrt{\xi/2}+\log\left(\frac{C_A\sqrt{2\pi}}{4\sqrt{\xi}\Gamma(3/4)^2 \varepsilon}\right)\right).
\]
\end{corollary}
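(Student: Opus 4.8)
The plan is to invoke Proposition~\ref{prop:truncation} directly and simply unravel its explicit bound to solve for the threshold value of $n$. Corollary~\ref{coro:trunc} is a reformulation of \eqref{eq:truncation}: the quantity $\left\| \omegav_n(x)-\Iv_n\right\|$ is already controlled by the right-hand side of \eqref{eq:truncation}, so it suffices to find conditions on $n$ that force that right-hand side below $\varepsilon$. In other words, the entire content of the corollary is the algebraic step of requiring
\[
C_A\frac{\sqrt{2\pi/\xi}}{2\Gamma(3/4)^2}\cosh(d\sqrt{\xi/2})\,e^{-\gamma(h\xi)t_n\xi} \leq \varepsilon
\]
and rearranging to isolate $n$ through the relation $t_n = nh$.

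First I would bound the hyperbolic cosine crudely from above. Since $\cosh(u) \le e^{u}$ for $u \ge 0$, we have $\cosh(d\sqrt{\xi/2}) \le e^{d\sqrt{\xi/2}}$, which lets us absorb the $\cosh$ factor into a single exponential and replace the numerical factor $\tfrac12$ by $\tfrac14$ after combining. Concretely this turns the prefactor-times-$\cosh$ product into $\frac{C_A\sqrt{2\pi}}{4\sqrt{\xi}\,\Gamma(3/4)^2}\,e^{d\sqrt{\xi/2}}$ multiplying the decaying exponential $e^{-\gamma(h\xi)t_n\xi}$, which matches exactly the grouping appearing inside the logarithm in the corollary's statement. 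Then taking logarithms of the inequality
\[
\frac{C_A\sqrt{2\pi}}{4\sqrt{\xi}\,\Gamma(3/4)^2}\,e^{d\sqrt{\xi/2}}\,e^{-\gamma(h\xi)t_n\xi} \leq \varepsilon
\]
and substituting $t_n = nh$ gives, after dividing through by the positive quantity $h\xi\gamma(h\xi)$, precisely the stated lower bound on $n$.

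The only point requiring a little care is the direction of the inequalities and the positivity of the divisor: one must note that $\gamma(h\xi) > 0$ by Lemma~\ref{lemma:gamma} (which guarantees $\gamma(\xi)\in(0,1]$ for $\xi>0$), so dividing by $h\xi\gamma(h\xi)$ preserves the inequality, and that $\xi > 1$ is used only to keep the logarithm's argument and the various factors in a clean regime. I do not anticipate any genuine obstacle here: the proposition has already done all the analytic work of estimating the contour integrals over $\Gamma_\pm$, and the corollary is purely a matter of solving the resulting exponential inequality for $n$. The proof is therefore a short deduction, and I would present it as such, simply quoting \eqref{eq:truncation}, bounding the $\cosh$, and taking logarithms.
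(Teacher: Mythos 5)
Your overall route is exactly the paper's route: the corollary appears immediately after Proposition~\ref{prop:truncation} with no separate proof, precisely because it is meant to be the algebraic rearrangement of \eqref{eq:truncation} that you describe, using $t_n=nh$ and the positivity $\gamma(h\xi)>0$ from Lemma~\ref{lemma:gamma}. However, the one step in which you produce the paper's constant is false. From $\cosh u\le e^u$ you get
\[
C_A\frac{\sqrt{2\pi/\xi}}{2\Gamma(3/4)^2}\cosh\bigl(d\sqrt{\xi/2}\bigr)\,e^{-\gamma(h\xi)t_n\xi}
\;\le\; C_A\frac{\sqrt{2\pi/\xi}}{2\Gamma(3/4)^2}\,e^{d\sqrt{\xi/2}}\,e^{-\gamma(h\xi)t_n\xi},
\]
i.e.\ the factor $\tfrac12$ survives; your claim that the $\cosh$ bound lets you ``replace $\tfrac12$ by $\tfrac14$ after combining'' amounts to asserting $\cosh u\le\tfrac12 e^u$, which is false for every $u$, since $\cosh u=\tfrac12(e^u+e^{-u})>\tfrac12 e^u$. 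Solving the correct inequality for $n$ yields the threshold with $2$ rather than $4$ in the denominator inside the logarithm, i.e.\ a requirement on $n$ that is larger by $\log 2/(h\xi\gamma(h\xi))$ than the one stated. So your write-up, as it stands, proves the (slightly weaker, but correct) version of the corollary with $2$ in place of $4$, not the stated one.

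To be fair, the discrepancy mirrors a looseness in the paper itself: the constant $4$ cannot be reached from \eqref{eq:truncation} by any one-sided bound on $\cosh$. It can only be traced to the intermediate estimates inside the proof of Proposition~\ref{prop:truncation}, where the two contributions along $\Gamma_-$ and $\Gamma_+$ separately carry the factor $\frac{\sqrt{2\pi/\xi}}{4\Gamma(3/4)^2}$, with $e^{+d\sqrt{\xi/2}}$ for $\Gamma_-$ and $e^{-d\sqrt{\xi/2}}$ for $\Gamma_+$. The corollary's condition on $n$ makes the dominant $\Gamma_-$ term at most $\varepsilon$; the total error is then at most $\varepsilon\bigl(1+e^{-2d\sqrt{\xi/2}}\bigr)\le 2\varepsilon$, not $\varepsilon$. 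A rigorous fix is therefore one of: (i) state and prove the version with $2$ inside the logarithm (your corrected computation does this); or (ii) keep the stated threshold but prove the bound $\varepsilon\bigl(1+e^{-2d\sqrt{\xi/2}}\bigr)$, equivalently add $\log\bigl(1+e^{-2d\sqrt{\xi/2}}\bigr)$ (or simply $\log 2$) inside the bracket. Either choice is fine for the purpose the corollary serves (determining $n_0$ up to constants), but you should make the choice explicitly rather than conceal it behind an invalid $\cosh$ estimate.
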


In order to efficiently approximate $\Iv_n$, we use the splitting
\begin{equation}\label{splitI}
\Iv_n =\sum_{j=0}^{J} \Iv_{n,j},
\end{equation}
with
\begin{equation}\label{Inj}
\Iv_{n,j} := \frac{h}{2\pi\opi} \int_{L_{j-1}}^{L_j} G(\lambda,x) \ev_n(-h\lambda)\, d\lambda,
\end{equation}
where $L_{-1} = 0$, $L_0 > 0$ is a free parameter, and $L_j=(1+B)L_{j-1}$ for $j > 1$ and some fixed $B\ge 1$. Every sub-integral $\Iv_{n,j}$ will be approximated by an appropriate Gauss quadrature. The case of $\Iv_{n,0}$ is treated separately due to the integrable singularity of the integrand at 0. To analyse the error due to Gauss quadrature, we use the following classical result.

\begin{theorem}\label{th:gwerr}
  Let $f$ be analytic inside the Bernstein ellipse
\[
\mathcal{E}_{\vrho} = \{ z \; : \; z = \frac12(w+w^{-1}), \, |w| = \vrho\}
\]
with $\vrho > 1$ and bounded there by $M$. Then the error of Gauss quadrature with weight $w(x)$ is bounded by
\[
|If-I_Q f| \leq 4M\frac{\vrho^{-2Q+1}}{\vrho-1}\int_{-1}^1 w(x)dx,
\]
where $If = \int_{-1}^1 w(x) f(x) dx$ and $I_Q f = \sum_{j = 1}^Q w_j f(x_j)$ is the corresponding Gauss formula, with weights $w_j > 0$.
\end{theorem}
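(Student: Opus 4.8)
The plan is to exploit the defining property of $Q$-point Gauss quadrature, namely that it integrates exactly every polynomial of degree at most $2Q-1$, and thereby to reduce the quadrature error to a question of best polynomial approximation. Concretely, for any polynomial $p$ with $\deg p \le 2Q-1$ we have $Ip = I_Q p$, so that
\[
If - I_Q f = I(f-p) - I_Q(f-p).
\]
Bounding each term by the sup-norm of $f-p$ on $[-1,1]$ gives
\[
|If - I_Q f| \le \|f-p\|_\infty \Bigl( \int_{-1}^1 w(x)\,dx + \sum_{j=1}^Q w_j \Bigr).
\]
Here the positivity of the Gauss weights $w_j > 0$ together with exactness on the constant $1$ yields $\sum_{j=1}^Q w_j = \int_{-1}^1 w(x)\,dx$, so the bracket equals $2\int_{-1}^1 w(x)\,dx$. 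It then remains only to choose $p$ so that $\|f-p\|_\infty$ is as small as possible.

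For the approximation step I would expand $f$ in its Chebyshev series $f = \sum_{k=0}^\infty a_k T_k$ on $[-1,1]$ and take $p$ to be its truncation at degree $2Q-1$. The crucial quantitative input is the geometric decay of the Chebyshev coefficients forced by analyticity of $f$ inside $\mathcal{E}_{\vrho}$, namely $|a_k| \le 2M\vrho^{-k}$ for $k \ge 1$. Summing the tail then gives
\[
\|f - p\|_\infty \le \sum_{k=2Q}^\infty |a_k| \le 2M \sum_{k=2Q}^\infty \vrho^{-k} = \frac{2M\,\vrho^{-2Q+1}}{\vrho-1},
\]
and multiplying by the factor $2\int_{-1}^1 w$ from the first step reproduces exactly the claimed bound $4M\,\vrho^{-2Q+1}(\vrho-1)^{-1}\int_{-1}^1 w(x)\,dx$.

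The main technical step, and the place where I expect the real work to lie, is the coefficient estimate $|a_k| \le 2M\vrho^{-k}$. I would obtain it by writing the Chebyshev coefficient as $a_k = \tfrac{2}{\pi}\int_{-1}^1 f(x) T_k(x)(1-x^2)^{-1/2}\,dx$ and applying the Joukowski substitution $x = \tfrac12(w + w^{-1})$, under which $T_k(x) = \tfrac12(w^k + w^{-k})$ and the Bernstein ellipse $\mathcal{E}_{\vrho}$ is the image of the circle $|w| = \vrho$. This recasts $a_k$ as a contour integral in the $w$-plane which, by analyticity of $f$ inside $\mathcal{E}_{\vrho}$, can be deformed out to $|w| = \vrho$; on that circle $|f| \le M$ while the terms $w^{-k}$ contribute the factor $\vrho^{-k}$, the remaining length and Jacobian factors producing the constant. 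Everything else, in particular the exactness of Gauss quadrature up to degree $2Q-1$ and the positivity of its weights, is classical and may be invoked as a black box, so no separate difficulty arises there.
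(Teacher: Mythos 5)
Your proof is correct and follows exactly the route the paper itself invokes: the paper's ``proof'' of this theorem is only a citation to von Sydow and to Trefethen's Chapter~19 argument for $w(x)\equiv 1$ extended to a general weight, which is precisely the exactness-plus-Chebyshev-truncation argument you reconstruct, with the factor $2\int_{-1}^1 w$ from weight positivity and the tail sum $2M\vrho^{-2Q+1}/(\vrho-1)$ combining to give the stated constant. The one point to make explicit in your sketch of the coefficient bound $|a_k|\le 2M\vrho^{-k}$ is that the $w^{+k}$ half of $T_k=\frac12(w^k+w^{-k})$ cannot be deformed outward (it would grow like $\vrho^{k}$); it must be handled via the symmetry $F(w)=F(w^{-1})$ of the transplanted function, equivalently by deforming that part inward to $|w|=\vrho^{-1}$, after which the classical estimate follows as you describe.
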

\begin{proof}
 To the best of our knowledge, the first proof of this result appeared in \cite{Sy77}. A different proof for $w(x) \equiv 1$, which can easily be extended to the case of a general weight as in \cite{BanLo},  can be found in \cite[Chapter 19]{Tre}.
\end{proof}

\subsection{Gauss-Jacobi quadrature for the initial interval}
We fix the first interval $[0,L_0]$ and compute
\[
\Iv_{n,0} = \frac{h}{2\pi\mi}\int_0^{L_0} G(\lambda,x) \ev_n(-h\lambda)\,d\lambda
= \frac{hL_0}{4\pi \mi} \int_{-1}^1 G((1+y)L_0/2,x) \ev_n(-h(1+y)L_0/2) \,dy.
\]
In the 1D case, $G(\lambda,x)\sqrt{\lambda} = \mi e^{\mi \pi/4}\cosh(d e^{\mi\pi/4}\sqrt{\lambda})$ is an entire function of $\lambda$. Hence in this interval we will use Gauss-Jacobi quadrature with weight $w(x) = (x+1)^{-1/2}$ on the interval $[-1,1]$. We denote by $\tauv_{n,0}(Q)$ the corresponding quadrature error when taking $Q$ quadrature nodes.

To estimate the error of the quadrature, we will need to bound, according to Theorem \ref{th:gwerr},
\begin{equation}\label{fv0}
\fv(\zeta) =  h\frac{\sqrt{2L_0}}{4\pi} \ev_n(-h(1+\zeta)L_0/2)\cosh(d e^{i\pi/4}\sqrt{L_0(1+\zeta)/2}), \qquad \zeta \in \mathcal{E}_{\vrho},
\end{equation}
where we have already neglected the modulus one quantity $\e^{\opi\pi/4}$ included in the definition of $G(\lambda,d)$. Notice that there is a maximal value of $\vrho$, which we denote $\vrho_{\max}$, determined by the location of the poles of $\ev_n$.

\begin{theorem}\label{th:errgj}
Let $b$ and $\nu$ be as in Lemma~\ref{lem:rbound},
\begin{equation}\label{rhomax_gj}
\vrho_{\max}=1+\frac{2b}{L_0 h}+\sqrt{\left(\frac{2b}{L_0 h}\right)^2+\frac{4b}{L_0h}}
\end{equation}
and
\begin{equation}\label{rhoopt_gj}
\vrho_{opt}=\frac{2Q}{ \left(d\sqrt{3L_0/2}+ \nu t_n  L_0/2\right)} + \sqrt{1+\left(\frac{2Q}{ \left(d\sqrt{3L_0/2}+ \nu t_n L_0/2\right)}\right)^2}.
\end{equation}
Then, if $\vrho_{opt} \in (2+\sqrt{3},\vrho_{\max})$, we can bound the error of the $Q$-node Gauss-Jacobi quadrature by
\[
\left\| \tauv_{n,0}(Q) \right\| \leq C_{\qv} h\frac{4\sqrt{L_0}}{\pi} \frac{\vrho_{opt}}{\vrho_{opt}-1} \left(\frac{ \e \left(d\sqrt{3L_0/2}+ \nu  t_n L_0/2\right)}{4Q}\right)^{2Q}.
\]
Otherwise
\[
\|\tauv_{n,0}(Q) \| \leq C_{\qv} h\frac{4\sqrt{L_0}}{\pi} \frac{\vrho_{\max}^{-2Q+1}}{\vrho_{\max}-1} \exp\left( \frac{ b}{h}\left(d\sqrt{6/L_0}+ \nu t_n \right)\right).
\]
\end{theorem}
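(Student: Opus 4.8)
The plan is to apply the Gauss quadrature error bound of Theorem~\ref{th:gwerr} to the weighted integral with weight $w(x)=(1+x)^{-1/2}$, for which $\int_{-1}^1 w=2\sqrt2$, and integrand $\fv$ from \eqref{fv0}; the whole argument then reduces to producing a good bound $M(\vrho)$ for $\|\fv\|$ on the Bernstein ellipse $\mathcal{E}_\vrho$ and choosing $\vrho$ optimally. First I would record the geometric facts about $\mathcal{E}_\vrho$: writing $a=\tfrac12(\vrho+\vrho^{-1})$ for the semi-major axis, a direct computation gives $\max_{\zeta\in\mathcal{E}_\vrho}|1+\zeta|=1+a$ and $\max_{\zeta\in\mathcal{E}_\vrho}\Re(-(1+\zeta))=a-1$, both attained at the real endpoint. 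Since $z=-h(1+\zeta)L_0/2$, this means $\Re z$ ranges over $[-\tfrac{hL_0}{2}(1+a),\tfrac{hL_0}{2}(a-1)]$, so requiring the right-most value to stay in the strip $0\le\Re z\le b$ of Lemma~\ref{lem:rbound} forces $a\le 1+\tfrac{2b}{hL_0}$; solving $\tfrac12(\vrho+\vrho^{-1})=1+\tfrac{2b}{hL_0}$ for $\vrho$ yields exactly $\vrho_{\max}$ in \eqref{rhomax_gj}. This is also the analyticity radius of $\fv$, since $\ev_n=r^n\qv$ is analytic wherever $\Re z\le b$ avoids the poles of $r$ and $\qv$ (which have strictly positive real part), while $\cosh(de^{\mi\pi/4}\sqrt{L_0(1+\zeta)/2})$ is entire in $\zeta$.

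Next I would bound the two factors of $\fv$ separately on $\mathcal{E}_\vrho$ for $\vrho\le\vrho_{\max}$. For the Runge--Kutta factor, $A$-stability together with Lemma~\ref{lem:rbound} gives $|r(z)|\le e^{\nu\max(\Re z,0)}$, hence uniformly on the ellipse $|r(z)|^n\le e^{\nu t_n L_0(a-1)/2}$, and combined with $\|\qv(z)\|\le C_\qv$ from Lemma~\ref{lem:rbound} we get $\|\ev_n(z)\|\le C_\qv e^{\nu t_n L_0(a-1)/2}$. For the hyperbolic factor, $|\cosh w|\le e^{|\Re w|}\le e^{|w|}$ with $w=de^{\mi\pi/4}\sqrt{L_0(1+\zeta)/2}$ gives $|\cosh(\cdots)|\le e^{d\sqrt{L_0(1+a)/2}}$. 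Multiplying yields $M(\vrho)\le \frac{h\sqrt{2L_0}}{4\pi}C_\qv\exp\!\left(\nu t_n L_0(a-1)/2+d\sqrt{L_0(1+a)/2}\right)$.

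The step that makes the optimization clean is the elementary inequality $\sqrt{1+a}\le\sqrt3\,(a-1)$, valid for $a\ge2$ (equivalently $\vrho\ge 2+\sqrt3$), which follows from $3a^2-7a+2=3(a-2)(a-\tfrac13)\ge0$. It linearizes the $\cosh$ exponent and collapses both contributions into $M(\vrho)\le \frac{h\sqrt{2L_0}}{4\pi}C_\qv e^{A(a-1)}$ with $A=d\sqrt{3L_0/2}+\nu t_n L_0/2$. Feeding this into Theorem~\ref{th:gwerr} and simplifying the constant, using $8\sqrt2\cdot\tfrac{\sqrt{2L_0}}{4\pi}=\tfrac{4\sqrt{L_0}}{\pi}$, gives $\|\tauv_{n,0}(Q)\|\le C_\qv h\tfrac{4\sqrt{L_0}}{\pi}\tfrac{\vrho}{\vrho-1}e^{A(a-1)}\vrho^{-2Q}$ for every $2+\sqrt3\le\vrho\le\vrho_{\max}$.

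Finally I would minimize $e^{A(a-1)}\vrho^{-2Q}$ over $\vrho$. Differentiating shows the minimizer satisfies $\tfrac12(\vrho-\vrho^{-1})=2Q/A=:\kappa$, i.e. $\vrho=\kappa+\sqrt{1+\kappa^2}=\vrho_{opt}$ as in \eqref{rhoopt_gj}. To evaluate the bound at $\vrho_{opt}$ cleanly I would use $\vrho_{opt}\ge 2\kappa$ and $a_{opt}-1=\sqrt{1+\kappa^2}-1\le\kappa$, which give $e^{A(a_{opt}-1)}\le e^{A\kappa}=e^{2Q}$ and $\vrho_{opt}^{-2Q}\le(2\kappa)^{-2Q}$, so that $e^{A(a_{opt}-1)}\vrho_{opt}^{-2Q}\le(\tfrac{e}{2\kappa})^{2Q}=(\tfrac{eA}{4Q})^{2Q}$; this is the first displayed bound, legitimate precisely when $\vrho_{opt}\in(2+\sqrt3,\vrho_{\max})$, so that the optimum lies both in the linearization regime $a\ge2$ and inside the analyticity ellipse. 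When instead the unconstrained optimum is inadmissible I would simply take $\vrho=\vrho_{\max}$, where $a_{\max}-1=\tfrac{2b}{hL_0}$ turns $A(a_{\max}-1)$ into $\tfrac{b}{h}(d\sqrt{6/L_0}+\nu t_n)$ after simplification, giving the second bound. The main obstacle is the ellipse bookkeeping culminating in the linearizing inequality $\sqrt{1+a}\le\sqrt3(a-1)$; once that is in hand the optimization is routine, and it is exactly this inequality that dictates the threshold $2+\sqrt3$ appearing in the hypothesis.
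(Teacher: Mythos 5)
Your proof is correct and follows essentially the same route as the paper: the same Bernstein-ellipse error estimate (Theorem~\ref{th:gwerr} with $\int_{-1}^1(1+x)^{-1/2}dx=2\sqrt2$), the same derivation of $\vrho_{\max}$ from keeping $\Re z\le b$, the same linearizing inequality $\sqrt{1+a}\le\sqrt{3}\,(a-1)$ for $a\ge 2$ (which is exactly where the threshold $2+\sqrt{3}$ comes from), the same stationarity condition giving $\vrho_{opt}$, and the same fallback choice $\vrho=\vrho_{\max}$ yielding the factor $\exp\bigl(\tfrac{b}{h}(d\sqrt{6/L_0}+\nu t_n)\bigr)$. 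The only cosmetic difference is that you bound $\ev_n$ uniformly on the ellipse via $|r(z)|\le e^{\nu\max(\Re z,0)}$ and $|1+\zeta|\le 1+a$, whereas the paper splits into the regions $\Re\zeta\le-1$ and $\Re\zeta\ge-1$; this leads to the same final bound on $\|\fv\|$.
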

\begin{remark}\label{rem:GJQ}
The second estimate, seems quite pessimistic because of the exponentially growing term. However, notice that we can take $b = h$  which implies $\nu \approx 1$ and $\vrho_{\max} > 1+4/L_0$. Hence both estimates imply that if we choose $L_0 \propto T^{-1}$, $Q = O(\log \veps)$ quadrature nodes are sufficient to obtain $\|\tauv_{n,0}(Q) \| \leq \veps$.
Note, that in numerical experiments reported in this paper, it was always the case that $\vrho_{opt} \in (2+\sqrt{3},\vrho_{\max})$.
\end{remark}
\begin{proof}[Theorem~\ref{th:errgj}]
Note that
\[
|\cosh(d e^{i\pi/4}\sqrt{L_0(1+\zeta)/2})| \leq e^{d \left|\Re e^{i\pi/4}\sqrt{L_0(1+\zeta)/2} \right|} \leq e^{d|L_0(1+\zeta)/2|^{1/2}}.
\]
As we will want to avoid the poles of $\ev_n$ and use Lemma~\ref{lem:rbound} we need that $\vrho \leq \vrho_{\max}$. This upper bound \eqref{rhomax_gj} is obtained as a solution of
\[
h\left(\vrho_{\max}+\vrho^{-1}_{\max}-2\right)L_0/4=b.
\]
From Lemma~\ref{lem:rbound} and the definition of $\ev_n$ we can now bound
\[
  \begin{split}
\|\fv(\zeta)\| &\leq C_{\qv} h\frac{\sqrt{2L_0}}{4\pi} \exp\left({d|L_0(1+\zeta)/2|^{1/2}-\nu t_n L_0(1+\Re\zeta)/2}\right), \qquad \Re \zeta \leq -1, \zeta \in \mathcal{E}_{\vrho}\\
&\leq  C_{\qv} h\frac{\sqrt{2L_0}}{4\pi} \exp\left(dL_0^{1/2}(\vrho+\vrho^{-1}-2)^{1/2}/2+\nu t_n L_0(\vrho+\vrho^{-1}-2)/4\right).
  \end{split}
\]
Let $\vrho = e^{\delta}$ for $\delta > 0$. Then
\[
  \begin{split}
\|\fv(\zeta)\| &\leq  C_{\qv} h\frac{\sqrt{2L_0}}{4\pi} \exp\left(dL_0^{1/2}(\cosh \delta -1)^{1/2}/\sqrt{2}+\nu t_n L_0(\cosh \delta -1)/2\right).
\end{split}
\]
Assuming $\vrho  \geq 2+\sqrt{3}$ implies $\cosh \delta \geq  2$ and
\[
  \begin{split}
\|\fv(\zeta)\| &\leq  C_{\qv} h\frac{\sqrt{2L_0}}{4\pi}  \exp\left(\left(d\sqrt{L_0/2}+ \nu t_n L_0/2 \right)(\cosh \delta -1)\right), \qquad \Re \zeta \leq -1, \zeta \in \mathcal{E}_{\vrho}.
\end{split}
\]

For $\Re \zeta \geq -1, \zeta \in \mathcal{E}_{\vrho}$ we have that $|r(-h(1+\zeta)L_0/2)| \leq 1$ and hence
\[
  \begin{split}
\|\fv(\zeta)\| &\leq C_{\qv} h\frac{\sqrt{2L_0}}{4\pi} e^{d|L_0(1+\zeta)/2|^{1/2}}\\
&\leq C_{\qv} h\frac{\sqrt{2L_0}}{4\pi} e^{d\sqrt{L_0/2}|1+\cosh\delta|^{1/2}}\\
&\leq C_{\qv} h\frac{\sqrt{2L_0}}{4\pi} e^{d\sqrt{3L_0/2}(\cosh \delta -1)},
  \end{split}
\]
where we have used that $(1+x)^{1/2} \leq \sqrt{3}(x-1)$ for $x \geq 2$.

Therefore, from Theorem \ref{th:gwerr} we deduce that
\[
\| \tauv_{n,0}(Q)\| \leq C_{\qv} h\frac{4\sqrt{L_0}}{\pi}\min_{\delta > 0} \frac{e^{\delta}}{e^{\delta}-1}\exp\left(-2Q\delta + \left(d\sqrt{3L_0/2} + \nu t_n L_0/2 \right)(\cosh \delta -1)\right).
\]
So we minimize
\[
g(\delta) = -2Q\delta+\left(d\sqrt{3L_0/2} + \nu t_n L_0/2 \right)(\cosh \delta -1).
\]
As
\[
g'(\delta) = -2Q+  \left(d\sqrt{3L_0/2} + \nu t_n L_0/2 \right)\sinh \delta
\]
and
\[
g''(\delta) = \left(d\sqrt{3L_0/2} + \nu t_n L_0/2 \right)\cosh \delta \geq 0
\]
the minimum is reached at
\[
\delta_{opt} = \sinh^{-1} \left(\frac{2Q}{d\sqrt{3L_0/2} + \nu t_n L_0/2 }\right).
\]
Using the identities
$$
\sinh^{-1}y = \log\left( y+\sqrt{1+y^2} \right), \quad \cosh x = \sqrt{1+\sinh^2 x},
$$
we obtain the value of $\vrho_{opt}=\e^{\delta_{opt}}$ in the statement and
\[
e^{-2Q\delta_{opt}} \leq \left(\frac{d\sqrt{3L_0/2} + \nu t_n L_0/2 }{4Q}\right)^{2Q}.
\]
Using now that $-1+\sqrt{1+x^2} \leq x$, for $x \geq 0$, we have
\[
\exp(\cosh \delta_{opt} -1) \leq \exp(\sinh \delta_{opt}) = \exp \left(\frac{2Q}{d\sqrt{3L_0/2} + \nu t_n L_0/2}\right).
\]
Hence, in case $\vrho_{opt} \in (1,\vrho_{\max})$ the following bound holds
\[
\|\tauv_{n,0}(Q)\| \leq  C_{\qv} h\frac{4\sqrt{L_0}}{\pi}  \frac{\vrho_{opt}}{\vrho_{opt}-1} \left( \e \frac{d\sqrt{3L_0/2}+ \nu  t_n L_0/2}{4Q}\right)^{2Q}.
\]
Otherwise we choose $\vrho=\vrho_{\max}=\e^{\delta_{\max}}$ and obtain
\[
\begin{split}
\|\fv(\zeta)\| &\le  C_{\qv} h\frac{\sqrt{2L_0}}{4\pi}  \exp\left( \left(d \sqrt{3L_0/2}+ \nu t_n L_0/2\right)(\cosh \delta_{\max} -1)\right) \\
&=  C_{\qv} h\frac{\sqrt{2L_0}}{4\pi}  \exp\left( \left(d \sqrt{6/L_0}+ \nu t_n \right) \frac{b}{h}\right)
\end{split}
\]
and the stated bound for the error.
\end{proof}

\subsection{Gauss quadrature away from the singularity}
In this section we analyze the error in the Gauss-Legendre ($w(x)=1$) quadrature of the integrals $\Iv_{n,j}$ in \eqref{splitI}, with $j\ge 1$, which can be written as
\[
\Iv_{n,j}=h \frac{\Delta L_{j}}{4\pi\opi} \int_{-1}^{1} G\left(L_{j-1}+ \frac{\Delta L_j}{2} (y+1),x \right) \ev_n \left(-h \left(L_{j-1}+ \frac{\Delta L_j}{2} (y+1) \right) \right) \,dy,
\]
with $\Delta L_j:=L_j-L_{j-1}=BL_{j-1}$.

\begin{theorem}\label{th:errg}
Let $\tauv_{n,j}(Q)$ be the error in the approximation of $\Iv_{n,j}$ by Gauss quadrature with weight $w(x)=1$ and $Q$ quadrature nodes. Then
\[
\left\| \tauv_{n,j}(Q)\right\| \le  h\frac{2BL^{1/2}_{j-1}}{\pi}  \min_{\vrho \in (1,\vrho_{\max})} \frac{\vrho^{-2Q+1}}{\vrho-1}\max_{\theta \in [0,\pi]} h_{n,j}(\vrho,\theta),
\]
with
\begin{equation}
  \label{eq:heps}
h_{n,j}(\veps) = \eta_\theta^{-1/2}  \e^{dL_{j-1}^{1/2}(1+(\vrho/2)^2)^{1/4}\eta_\theta^{1/2} -\gamma(h L_{j-1}\eta_\theta)\eta_\theta  L_{j-1}t_n }
\end{equation}
and
\begin{equation}\label{eq:vrho_max}
\vrho_{\max}=1+\frac{2}{B}\left(1+ \sqrt{1+B} \right).
\end{equation}
and
\begin{equation}
  \label{eq:eta_theta}
\eta_\theta = (1+B((\vrho+\vrho^{-1})\cos \theta+2)/4).
\end{equation}
\end{theorem}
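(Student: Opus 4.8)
The plan is to recast $\tauv_{n,j}(Q)$ as the error of a Gauss--Legendre rule and to estimate the integrand on a Bernstein ellipse via Theorem~\ref{th:gwerr}. Introducing the affine map $\lambda(\zeta)=L_{j-1}+\tfrac{\Delta L_j}{2}(1+\zeta)=L_{j-1}\bigl(1+\tfrac{B}{2}(1+\zeta)\bigr)$, which carries $[-1,1]$ onto $[L_{j-1},L_j]$, the rule (weight $w\equiv1$, $\int_{-1}^1 w=2$) is applied to $\fv(\zeta)=h\tfrac{\Delta L_j}{4\pi}G(\lambda(\zeta),x)\ev_n(-h\lambda(\zeta))$, so Theorem~\ref{th:gwerr} gives $\|\tauv_{n,j}(Q)\|\le 8M\,\tfrac{\vrho^{-2Q+1}}{\vrho-1}$ with $M=\max_{\zeta\in\mathcal E_\vrho}\|\fv(\zeta)\|$. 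The first step is to fix the largest admissible $\vrho$. Since $\cosh(de^{\mi\pi/4}\sqrt\lambda)$ is even in $\sqrt\lambda$, the only singularity of the $1$D kernel $G$ from Theorem~\ref{thm:contour} is the branch point $\lambda=0$, while $\ev_n(-h\lambda)=r(-h\lambda)^n\qv(-h\lambda)$ has poles only where $-h\lambda$ is a pole of $r$, i.e.\ in $\Re\lambda<0$ by $A$-stability. Hence $\fv$ is analytic inside $\mathcal E_\vrho$ exactly as long as the leftmost point $\zeta=-\tfrac12(\vrho+\vrho^{-1})$ of the ellipse is not mapped past $\lambda=0$; solving $\tfrac12(\vrho+\vrho^{-1})=1+2/B$ yields \eqref{eq:vrho_max}, which is also the value at which $\eta_\pi=0$.

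Next I would estimate $\|\fv\|$ pointwise on $\mathcal E_\vrho$, parametrised by $w=\vrho e^{\mi\theta}$. A direct computation gives $\Re\lambda=L_{j-1}\eta_\theta$ with $\eta_\theta$ as in \eqref{eq:eta_theta} and $\Im\lambda=L_{j-1}\tfrac{B}{4}(\vrho-\vrho^{-1})\sin\theta$, and for $\vrho<\vrho_{\max}$ one has $\eta_\theta>0$ for every $\theta$, so the ellipse stays in $\Re\lambda>0$. The Runge--Kutta factor is controlled by combining $\ev_n=r^n\qv$ with Lemma~\ref{lem:qbound} and Lemma~\ref{lem:rbound} to get $\|\qv(-h\lambda)\|\le C_\qv$ (as $\Re(-h\lambda)<0$), and with Lemma~\ref{lemma:gamma} applied on the strip with $\xi=hL_{j-1}\eta_\theta$, for which $\Re(-h\lambda)=-\xi$, giving $|r(-h\lambda)|^n\le e^{-\gamma(hL_{j-1}\eta_\theta)\eta_\theta L_{j-1}t_n}$ since $nh=t_n$; this reproduces the $\gamma$-term and the $C_\qv$ of \eqref{eq:heps}.

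For the kernel I would use $|G(\lambda,x)|\le|\lambda|^{-1/2}e^{d\Re(e^{\mi\pi/4}\sqrt\lambda)}$ together with $|\lambda|^{-1/2}\le(\Re\lambda)^{-1/2}=L_{j-1}^{-1/2}\eta_\theta^{-1/2}$, which yields the prefactor $\eta_\theta^{-1/2}$ of \eqref{eq:heps}. The remaining and most delicate point is to bound $\Re(e^{\mi\pi/4}\sqrt\lambda)$ by $L_{j-1}^{1/2}(1+(\vrho/2)^2)^{1/4}\eta_\theta^{1/2}$; here the clean observation $\Re(e^{\mi\pi/4}\sqrt\lambda)=\tfrac12\bigl(\sqrt{|\lambda|+\Re\lambda}-\sqrt{|\lambda|-\Re\lambda}\bigr)\le\sqrt{\Re\lambda/2}$ settles the points with $\Im\lambda\ge0$, but at the conjugate points one is forced back to the cruder $|\cosh|\le e^{d|\lambda|^{1/2}}$ and hence to controlling $|\lambda|$ relative to $\Re\lambda=L_{j-1}\eta_\theta$ along the ellipse, which is exactly what produces the factor $(1+(\vrho/2)^2)^{1/4}$.

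Finally I would assemble $M\le h\tfrac{BL_{j-1}}{4\pi}L_{j-1}^{-1/2}C_\qv\max_{\theta\in[0,\pi]}h_{n,j}(\vrho,\theta)$, insert it into the Gauss--Legendre bound (using $8/(4\pi)=2/\pi$ and $L_{j-1}\cdot L_{j-1}^{-1/2}=L_{j-1}^{1/2}$), and minimise the result over $\vrho\in(1,\vrho_{\max})$ to reach the stated estimate. I expect the kernel estimate of the previous paragraph to be the main obstacle: as $\vrho\to\vrho_{\max}$ the ellipse approaches the branch point $\lambda=0$ and $\eta_\pi\to0$, so $\Re\lambda$ becomes small near $\theta=\pi$ while $|\lambda|$ need not, and the uniform control of $\Re(e^{\mi\pi/4}\sqrt\lambda)$ and of $|\lambda|$ in terms of $\eta_\theta$ has to be carried out with care precisely in this regime. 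The optimisation over $\vrho$ (which in practice is attained well inside $(1,\vrho_{\max})$, cf.\ Remark~\ref{rem:GJQ}) and the bookkeeping of constants are then routine.
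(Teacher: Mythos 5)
Your outline retraces the paper's proof essentially step by step: the same affine map $\lambda(\zeta)$, the same application of Theorem~\ref{th:gwerr} on the Bernstein ellipse, the same derivation of $\vrho_{\max}$ in \eqref{eq:vrho_max} from the requirement that the mapped ellipse avoid the branch point $\lambda=0$ (equivalently $\eta_\pi>0$), the identity $\Re\lambda=L_{j-1}\eta_\theta$, the identical treatment of $\ev_n=r^n\qv$ via Lemmas~\ref{lemma:gamma} and~\ref{lem:rbound}, and the same constant bookkeeping (your extra factor $C_{\qv}$ is in fact needed: the paper silently replaces $\|\qv(-h\lambda)\|\le C_{\qv}$ by $1$ in its second displayed estimate, a minor inconsistency of the paper, not of yours). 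What you do not prove is the one step that actually produces the factor $(1+(\vrho/2)^2)^{1/4}$ in \eqref{eq:heps}, namely the uniform bound $|\lambda(\zeta)|\le\sqrt{1+(\vrho/2)^2}\,L_{j-1}\eta_\theta$ on $\mathcal{E}_\vrho$, which you need on the lower half of the ellipse where your (correct, and sharper than anything in the paper) estimate $\Re\bigl(e^{\mi\pi/4}\sqrt{\lambda}\bigr)\le\sqrt{\Re\lambda/2}$ is unavailable. You declare this ``the main obstacle'' and leave it open, so as written the proposal is an outline with a genuine gap at its crux.

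Your suspicion about that step is, however, well founded: it is not merely delicate, it fails as stated, and the paper's own proof of it is erroneous. Writing $\Re\lambda=L_{j-1}(c+e\cos\theta)$ and $|\Im\lambda|=L_{j-1}a\sin\theta$ with $c=1+B/2$, $e=\tfrac{B}{4}(\vrho+\vrho^{-1})$, $a=\tfrac{B}{4}(\vrho-\vrho^{-1})$, the paper bounds $|\Im\lambda|/\Re\lambda$ by $a/c$, i.e.\ it divides the maximum of the numerator (attained at $\theta=\pi/2$) by the value of the denominator at that same point; but the maximum of the ratio is attained at $\cos\theta^\ast=-e/c$ and equals $a/\sqrt{c^2-e^2}$, which diverges as $\vrho\uparrow\vrho_{\max}$. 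Concretely, for $B=3$, $\vrho=2.9$, $\cos\theta\approx-0.973$, $L_{j-1}=1$ and $d>0$, one finds $\eta_\theta\approx 0.131$, $|\lambda|\approx 0.458$, and on the lower half of the ellipse $\Re\bigl(e^{\mi\pi/4}\sqrt{\lambda}\bigr)\approx 0.67$, whereas the exponent permitted by \eqref{eq:heps} is $(1+(\vrho/2)^2)^{1/4}\eta_\theta^{1/2}\approx 0.48$. So neither your outline nor the paper's argument establishes the bound with the minimum taken over all of $(1,\vrho_{\max})$; the statement (and both proofs) become correct once $\vrho$ is restricted to the subrange where $|\Im\lambda|\le(\vrho/2)\Re\lambda$ holds on the whole ellipse, i.e.\ $a/\sqrt{c^2-e^2}\le\vrho/2$ (for $B=3$ this admits $\vrho=2$, the value used in Remark~\ref{rem:GQ}, so the complexity conclusions drawn from the theorem are unaffected). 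To close your gap you must either impose such a restriction on the range of $\vrho$, or replace $(1+(\vrho/2)^2)^{1/4}$ in \eqref{eq:heps} by a factor that degenerates as $\eta_\pi\to 0$.
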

\begin{proof}
According to Theorem \ref{th:gwerr} we now need to bound the function
\begin{equation}\label{fvj}
\fv(\zeta) = h \frac{\Delta L_{j}}{4\pi\opi} G\left(L_{j-1}+ \frac{\Delta L_j}{2} (\zeta+1),x \right) \ev_n \left(-h \left(L_{j-1}+ \frac{\Delta L_j}{2} (\zeta+1) \right) \right) , \qquad \zeta \in \mathcal{E}_{\vrho},
\end{equation}
In order to avoid the singularity of the square root we require
$$
L_{j-1}-\frac{\Delta L_j}{4}(\vrho+\vrho^{-1}-2)
=L_{j-1}\left(1-\frac{B}{4}(\vrho+\vrho^{-1}-2)\right)
> 0,
$$
which is satisfied for $1<\vrho <\vrho_{\max}$ and
$$
\vrho_{\max}=1+\frac{2}{B}\left(1+ \sqrt{1+B} \right).
$$
Note  that (using $|z| \leq \sqrt{1+(\Im z/\Re z)^2}|\Re z|$)
\[
  \begin{split}
    \left|L_{j-1}+\frac{\Delta L_j}{2}(\zeta+1) \right| &\leq
    \sqrt{1+\frac{(\vrho-\vrho^{-1})(L_j-L_{j-1})/4}{(L_{j-1}+L_j)/2}} \left(L_{j-1}+\frac{\Delta L_j}{2}(\Re\zeta+1) \right)\\
        &=
        \sqrt{1+\frac{(\vrho-\vrho^{-1})^2(1+B)^2}{4(2+B)^2}}
        \left(L_{j-1}+\frac{\Delta L_j}{2}(\Re\zeta+1)\right) \\
        &\leq
        \sqrt{1+(\vrho/2)^2}
        \left(L_{j-1}+\frac{\Delta L_j}{2}(\Re\zeta+1) \right) \\
        &= \sqrt{1+(\vrho/2)^2}L_{j-1}
        \eta_\theta,
\end{split}
  \]
where
$\eta_\theta = 1+B((\vrho+\vrho^{-1})\cos \theta+2)/4$.

With this notation we can bound, for every  $\zeta \in \mathcal{E}_{\vrho}$, $\vrho \in (1, \vrho_{\max})$
\begin{align*}
\left\|\fv(\zeta)  \right\| &\le h \frac{\Delta L_{j}}{4\pi} \left| L_{j-1}+ \frac{\Delta L_j}{2} (\zeta+1) \right|^{-1/2} \e^{d \left| L_{j-1}+ \frac{\Delta L_j}{2} (\zeta+1) \right|^{1/2}} \left\|\ev_n \left(-h \left(L_{j-1}+ \frac{\Delta L_j}{2} (\zeta+1) \right) \right)\right\| \\
  &\leq  h\frac{\Delta L_{j}}{4\pi} (L_{j-1}\eta_\theta)^{-1/2} \e^{dL_{j-1}^{1/2}(1+(\vrho/2)^2)^{1/4}\eta_\theta^{1/2} -\gamma(hL_{j-1}\eta_\theta)\eta_\theta L_{j-1}t_n }\\
 &=  h\frac{BL^{1/2}_{j-1}}{4\pi} \eta_\theta^{-1/2}  \e^{dL_{j-1}^{1/2}(1+(\vrho/2)^2)^{1/4}\eta_\theta^{1/2} -\gamma(hL_{j-1}\eta_\theta)\eta_\theta  L_{j-1}t_n }.
\end{align*}
 The result then follows from Theorem \ref{th:gwerr}.
\end{proof}

\begin{remark}
  The above result is somewhat unsatisfactory as it still contains a min-max problem. The following corollary simplifies the estimate but is too pessimistic in practice. Hence, we make use of the corollary only for the discussion about the complexity of the algorithm and in practice numerically solve the above min-max problem in order to obtain optimal parameters.
\end{remark}

\begin{corollary}\label{cor:errg}
With notation as in Theorem~\ref{th:errg}
\[
  \left\| \tauv_{n,j}(Q)\right\| \le  h\frac{2BL^{1/2}_{j-1}}{\pi}  \min_{\vrho \in (1,\vrho_{\max})} \frac{\vrho^{-2Q+1}}{\vrho-1}
  \eta_{-}^{-1/2}  \e^{dL_{j-1}^{1/2}(1+(\vrho/2)^2)^{1/4}\eta_{+}^{1/2} -L_{j-1}t_n\gamma(h\eta_{+})\eta_{-} }
\]
where
\[
  \eta_\pm = 1+B(\pm(\vrho+\vrho^{-1})+2)/4.
  \]
\end{corollary}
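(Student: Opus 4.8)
The plan is to derive the Corollary directly from the bound in Theorem~\ref{th:errg} by replacing the inner maximum $\max_{\theta\in[0,\pi]} h_{n,j}(\vrho,\theta)$ with an explicit, $\theta$-independent upper bound, while leaving the outer minimization over $\vrho$ untouched. The crucial structural observation is that $h_{n,j}(\vrho,\theta)$ depends on $\theta$ only through $\eta_\theta = 1+B((\vrho+\vrho^{-1})\cos\theta+2)/4$, which is an affine, monotone function of $\cos\theta$. As $\theta$ ranges over $[0,\pi]$, $\cos\theta$ sweeps $[-1,1]$, so $\eta_\theta$ sweeps exactly the interval $[\eta_-,\eta_+]$, with the endpoints $\eta_\pm = 1+B(\pm(\vrho+\vrho^{-1})+2)/4$ attained at $\theta=\pi$ and $\theta=0$ respectively. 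Thus bounding the maximum over $\theta$ reduces to bounding each $\eta_\theta$-dependent factor of $h_{n,j}$ by its worst case as $\eta_\theta$ runs over $[\eta_-,\eta_+]$.

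I would then treat the three $\eta_\theta$-dependent pieces separately. The algebraic prefactor $\eta_\theta^{-1/2}$ is decreasing in $\eta_\theta$, hence maximized at $\eta_\theta=\eta_-$, giving $\eta_\theta^{-1/2}\le \eta_-^{-1/2}$. In the exponent, the positive contribution $dL_{j-1}^{1/2}(1+(\vrho/2)^2)^{1/4}\eta_\theta^{1/2}$ is increasing in $\eta_\theta$ and is therefore bounded above by its value at $\eta_\theta=\eta_+$. For the negative contribution $-\gamma(hL_{j-1}\eta_\theta)\eta_\theta L_{j-1}t_n$ one needs a \emph{lower} bound on the product $\gamma(hL_{j-1}\eta_\theta)\eta_\theta$; here I would invoke Lemma~\ref{lemma:gamma}. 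Since $\gamma$ is monotonically decreasing in its argument, evaluating it at the largest argument $\eta_+$ only decreases its value, while the outer factor $\eta_\theta$ is bounded below by $\eta_-$, so that $\gamma(hL_{j-1}\eta_\theta)\eta_\theta \ge \gamma(h\eta_+)\eta_-$, producing exactly the exponent displayed in the statement. Combining the three uniform-in-$\theta$ estimates yields
\[
\max_{\theta\in[0,\pi]} h_{n,j}(\vrho,\theta)\le \eta_-^{-1/2}\,\e^{dL_{j-1}^{1/2}(1+(\vrho/2)^2)^{1/4}\eta_+^{1/2}-L_{j-1}t_n\gamma(h\eta_+)\eta_-},
\]
and substituting this into Theorem~\ref{th:errg} gives the claim.

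The one genuine subtlety, and the source of the pessimism flagged in the remark preceding the statement, lies in the negative term: because $\gamma$ decreases while $\eta_\theta$ increases, no single value of $\theta$ realizes the worst case of the product $\gamma(hL_{j-1}\eta_\theta)\eta_\theta$, so evaluating $\gamma$ at the \emph{largest} admissible $\eta$ and multiplying by the \emph{smallest} one strictly overestimates the true maximum of $h_{n,j}$. This product-of-worst-cases step is valid but loose, which is precisely why it is reasonable to retain the sharper min-max bound of Theorem~\ref{th:errg} for numerically selecting parameters and to use the Corollary only in the asymptotic complexity analysis. Everything else in the argument is routine monotonicity bookkeeping, so I expect this decoupling of the $\gamma$-factor to be the only nontrivial point.
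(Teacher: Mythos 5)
Your proposal is correct and follows essentially the same route as the paper: the paper's proof consists precisely of the two monotonicity facts you spell out, namely that $\gamma(\xi)$ decreases for increasing argument and that $\eta_\theta$ decreases from $\eta_+$ at $\theta=0$ to $\eta_-$ at $\theta=\pi$, so your write-up is simply a more explicit version of it. One caveat: a careful application of these monotonicities yields the exponent $-L_{j-1}t_n\,\gamma(hL_{j-1}\eta_+)\,\eta_-$ rather than $-L_{j-1}t_n\,\gamma(h\eta_+)\,\eta_-$; the missing factor $L_{j-1}$ inside $\gamma$ appears to be a typo in the corollary as stated (Remark~\ref{rem:GQ} uses $c_-=\gamma(hL_{j-1}\eta_+)\eta_-$, consistent with Theorem~\ref{th:errg}), so your intermediate inequality should read $\gamma(hL_{j-1}\eta_\theta)\eta_\theta\ge\gamma(hL_{j-1}\eta_+)\eta_-$ instead of the hybrid form you wrote, which is not valid in general when $L_{j-1}>1$.
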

\begin{proof}
  The proof follows from the fact that $\gamma(\xi)$ decreases for increasing argument and that $\eta_\theta$ from Theorem~\ref{th:errg} decreases from $\theta = 0$ to $\theta = \pi$.
\end{proof}
\begin{remark}\label{rem:GQ}
  To understand the required number of quadrature points let us set $B = 3$, as in the numerical experiments, giving $\vrho_{\max} = 3$. Choosing $\vrho = 2$ we get $\eta_{+} = 35/8$ and $\eta_{-} = 5/8$ and
  \[
      \left\| \tauv_{n,j}(Q)\right\| \leq  ChL^{1/2}   4^{-Q}
      \e^{c_{+}dL_{j-1}^{1/2} -c_{-}L_{j-1}t_n}
      \leq ChL^{1/2}   4^{-Q}
      \e^{\frac{c_{+}^2 d^2}{4c_{-}t_n}},
\]
where $C = \frac{12}{\pi}\eta_{-}^{-1/2}$, $c_{+} = 2^{1/4}\eta_{+}^{1/2}$ and $c_{-} =  \gamma(hL_{j-1}\eta_{+})\eta_{-}$. Hence to obtain accuracy $\veps$ we need to choose $Q = O(\log \veps^{-1} + \frac{d^2}{t_n})$ quadrature nodes. Furthermore, this indicates that the integrand can get exponentially large as $t_n \rightarrow 0$ which can create difficulties in finite precision arithmetic.
\end{remark}

\section{Parameter choice and the fast method}\label{sec:param}
In this section we specify the choice of the most relevant parameters in our method and explain our algorithm.

In the first place we choose the truncation parameter $\xi$. The estimate in Co\-ro\-lla\-ry~\ref{coro:trunc} suggests choosing $\xi \propto h^{-1}$. Hence, let $\xi = A_0/h$ for some constant $A_0 > 0$. Then if $n_0$ is chosen such that
\[
  n_0 \geq \frac{d \sqrt{\frac{A_0}{2h}}+\log \left(\frac{C_A \sqrt{2\pi h}}{4\sqrt{A_0}\Gamma(3/4)^2 \veps_0}\right)}{A_0\gamma(A_0)},
  \]
    the error due to truncation is less than $\veps_0$ for $n \geq n_0$. Note that this means that at least $O(h^{-1/2})$ weights will need to be computed directly. Here we use the estimate \eqref{eq:truncation}. Some minor gains can be made by computing the truncation error numerically using \eqref{eq:truncation_en}.

  Next, according to Theorem~\ref{th:errgj}  we need to choose $L_0$ proportional to $T^{-1}$, for $T$ the final time in our approximation as stated at the beginning of Section~\ref{sec:quad}. Hence we set $L_0  = A_1/T$ for some constant $A_1 > 0$. There will then be $J$ intervals away from the singularity where Gauss quadrature is used with $L_0 (1+B)^J = \xi$ implying $J = \frac1{(1+B)}\log\left(\frac{A_0T}{h}\right) = O(\log(T/h))$.

  Once all these parameters are set we can choose the number of quadrature nodes and weights in each interval so that each quadrature error $\tauv_{n,j}(Q) \leq \veps$ for $j = 0,\dots, J$ and $n \geq n_0$. Then with the choice $\veps_0 = \tol/2$ and $\veps =\frac{ \tol}{2(J+1)}$ we have that the total error is bounded by the tolerance $\tol>0$.   As explained in Remark~\ref{rem:GJQ}, $Q = O(\log \veps)$ guadrature nodes of the Gauss-Jacobi quadrature are sufficient to obtain error of size $\veps$ in this first interval. According to Remark~\ref{rem:GQ}, we need to use $O\left(\log \veps^{-1}+\frac{d^2}{n_0h}\right)$ quadrature nodes in each of the remaining intervals.

  Let us now describe the fast method for computing the discrete convolution
  $\uv_n = \sum_{j = 0}^n  \omegav_{j}(x)\fv_{n-j}$ for a general sequence of $s\times 1$ vectors $\fv_{j}$, $j = 0,\dots, N$. Using this formula requires the full sequences $\omegav_j$ and $\fv_j$, for $j=0,\dots,N$ to be kept in memory requiring $O(N)$ memory.
  This can instead be evaluated most efficiently by applying our quadrature approximation of the convolution weights in the same way as in \cite[Section 7.2]{BanLo}. We thus split the sum into two terms, the {\em local term}, with summation index $j=1,\dots,n_0$, and the remaining {\em history term}:
\begin{equation}
  \label{eq:model_dq}
\uv_n = \sum_{j = 0}^{n_0}  \omegav_{j}(x)\fv_{n-j} +\sum_{j = n_0+1}^n  \omegav_{j}(x)\fv_{n-j}.
\end{equation}
 The {\em local term} is evaluated directly, by precomputing and keeping in memory the first $n_0+1$ convolution weights.  The {\em history term} is instead evaluated by means of a fast summation algorithm which is based on the quadrature developed in Section~\ref{sec:quad}. After replacing the CQ weights by the result of applying our quadrature  we are led to the formula
$$
\sum_{j = n_0+1}^{n} \omegav_{j}(d)\fv_{n-j}
  \approx \dt\sum_{k = 1}^{N_Q} w_k G(x_k,d)  (r(-\dt x_k))^{n_0+1} \Qv_{n,k},
$$
with
$$
 \Qv_{\ell,k}=\sum_{j = 0}^{\ell-n_0-1}(r(-\dt x_k))^{j}  \qv(-\dt x_k)   \fv_{\ell-n_0-1-j}
$$
satisfying the recursion
$$
 \Qv_{\ell,k}= r(-\dt x_k)\Qv_{\ell-1,k}+  \qv(-\dt x_k) \fv_{\ell-n_0-1}, \quad \Qv_{n_0,k}=0.
$$

Let us investigate the complexity and the memory requirements of the above described algorithm. The vectors $\fv_j$, $j = n-n_0-1,n-n_0, \dots, n$ and $\Qv_{n-1,k}$, $k = 1,\dots, N_Q$ need to be kept in memory at each time step $t_n$, i.e., $n_0 + N_Q$ vectors need to be stored at any time. Recalling now that $N_Q = O\left(\left(\log \veps^{-1}+\frac{d^2}{n_0h}\right)\log(T/h)\right)$  the total memory requirements are given by  $O\left(n_0+\left(\log \veps^{-1}+\frac{d^2}{n_0h}\right)\log(T/h)\right)$. If we choose $n_0 = O(h^{-1/2})$, then the memory requirements are reduced to $O((h^{-1/2} +\log \veps^{-1})\log h^{-1})$. Turning to computational complexity, if the local term is computed using FFT methods as described in \cite{lb_ms,HaLuSch}, the total computational cost is  $O(n_0\log^2 n_0 + N N_Q)$. This reduces to $O(h^{-3/2}\log h^{-1})$ if we choose $n_0 = O(h^{-1/2})$.

We need to say a few more words about the choice of $n_0$. Any choice of $n_0$ as a function of $h$ that implies $t_{n_0} \rightarrow 0$ and $h \rightarrow 0$ would allow for exponentially increasing integrand, see Remark~\ref{rem:GQ}. In finite precision arithmetic this could lead to destructive cancellation and complete loss of accuracy. Choosing $n_0 = O(h^{-1})$ would entirely remove this difficulty however would require us to use many more direct steps and more memory. Nevertheless, even so our algorithm would bring many advantages if $n_0 \ll N$; compare this with \cite{BanLoSch}. Hence, $n_0$ should be chosen between $\text{\em const}\cdot  h^{-1/2}$ and $\text{\em const}\cdot  h^{-1}$ depending on the parameters of the problem investigated. In the two applications that we investigate in the numerical experiments, we choose the smallest possible $n_0$ that ensures that the truncation error, see Proposition~\ref{prop:truncation}, is bounded by the tolerance, i.e., $n_0 = O(h^{-1/2})$.
\section{Numerical experiments}\label{sec:numerics}

Next, we present results of various experiments to illustrate the new method. The codes used to perform these experiments can be found at \cite{codes}.

\subsection{Approximation of CQ weights}
In the first place we test our new quadrature and compare the CQ weights we obtain with those given by the standard method based on FFT.

We start by setting the parameters as in Section~\ref{sec:param} with $A_0 =1$ and $A_1 = 2$. We fix $B = 3$ and given a tolerance $\tol >0$ compute the number of nodes in each interval so that the error is bounded as described in Section~\ref{sec:param}. The results for $h = 10^{-2},10^{-3}$, $T = 100$, $d = 1$ are given in Figure~\ref{fig:conv_weights}. As reported in the caption of Figure~\ref{fig:conv_weights} and the legends, for a target accuracy of 3 digits in the computation of the CQ weights we need $n_0=8$ and $N_Q=28$ for $h=10^{-2}$, and $n_0=20$, $N_Q=48$ for $h=10^{-3}$. Thus the storage in our algorithm will be reduced from $N=10^4$  to $8+28$ quantities if $h=10^{-2}$, and from $N=10^5$ to $20+48$ if $h=10^{-3}$. For a more stringent target accuracy of six digits in the computation of the CQ weights, we need instead 15+48 quantities in storage (rather than $10^4$) for $h=10^{-2}$, and 27+74 for $h=10^{-3}$ (rather than $10^5$).

We can see that the chosen tolerance is not exceeded and the error for $n$ close to $n_0$ is close to the tolerance. However, the error for larger $n$ is much better than the tolerance. The truncation error and the quadrature error on the intervals away from the singularity all get quickly better with the larger $n$ so this is not so surprising. However, as the error bound for the Gauss-Jacobi quadrature in the first interval does not improve with increasing $n$, this suggests that possibly our estimate  in the first interval is not optimal.
This is not a great issue, as this single quadrature does not contribute a great deal to the overall costs: around 5\% of quadrature points are in this interval in the above calculations.

\begin{figure}
  \centering
  \includegraphics[width=.48\textwidth]{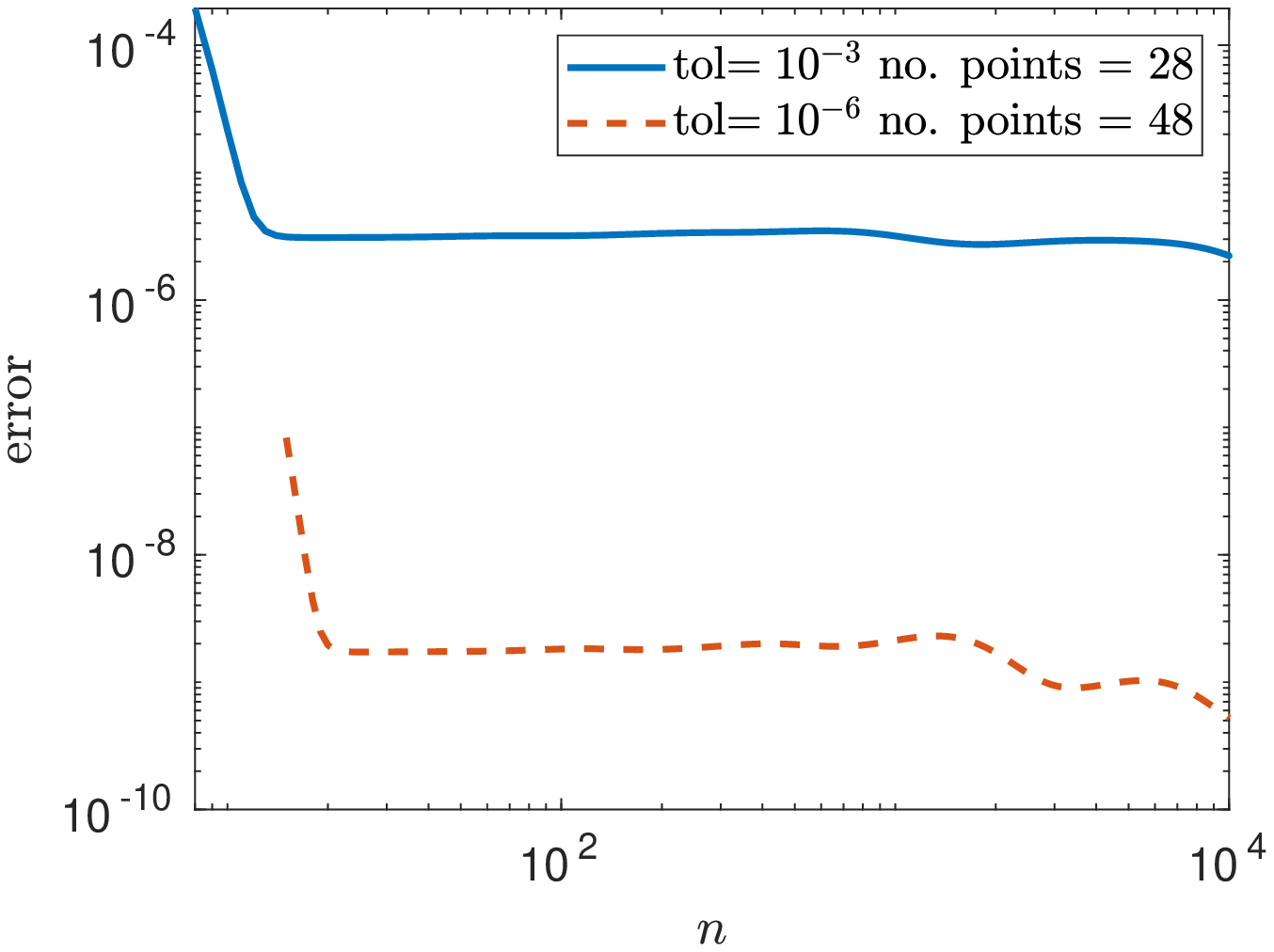}
    \includegraphics[width=.48\textwidth]{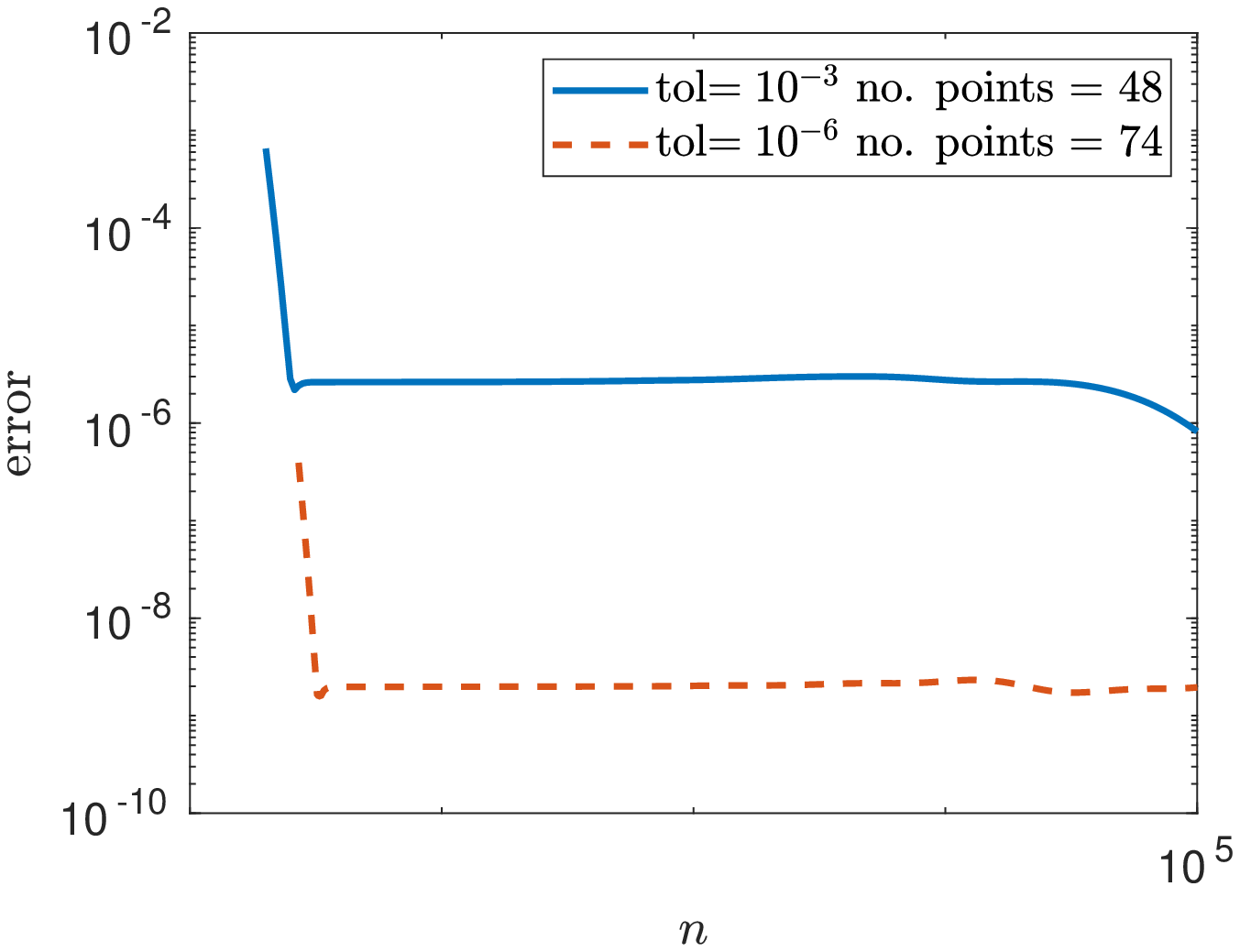}
  \caption{We plot the error $\left\|  \omegav_n(x)-\Iv_n \right\|$ for different tolerances and $n \geq n_0$ and $h = 10^{-2}$ on the left and with $h = 10^{-3}$ on the right. The total number of quadrature points is shown on the graphs. For $h = 10^{-2}$, $n_0$ was computed as 8 and 15 for the two tolerances and for $h = 10^{-3}$ these were 20 and 27.}
  \label{fig:conv_weights}
\end{figure}

\subsection{A linear Schr\"odinger equation with concentrated potential}\label{sec:lin_sch}

We start by considering the same example as in \cite{Lub92} and compute the solution $\psi(x,t)$, for $x\in\bR$, $t>0$ to:
\begin{equation}\label{lins}
\frac1\opi \partial_t \psi = \partial_{xx} \psi - \sum_{j=1}^{M} V_j(t) \delta_{x_j}  \psi ; \quad \psi(x,0)=\psi_0(x)
\end{equation}
for some given time-dependent amplitudes $V_j(t)$.

Assuming that
$$
V_j(t)\equiv \overline{V}_j \quad \mbox{ and } \quad \psi(x,t)=\psi_0(x)\e^{\opi \omega t}, \quad \mbox{for }\  t\le 0,
$$
the values $q_j(t):=\psi(x_j,t)$ satisfy the system of Volterra integral equations
\begin{equation}\label{chargeeq_lin}
q_j(t) + \sum_{k=1}^M  \int_0^t k(t-s, x_j-x_{k})\left( V_{k}(s) q_{k}(s)- \overline{V}_{k} \psi_0(x_{k})\e^{\opi \omega s} \right)   \,ds = \psi_0(x_j) \e^{\opi \omega t},
\end{equation}
for $j=1,\dots,M$ and the solution to \eqref{lins} can be written
\begin{equation}\label{bilins}
\psi(x,t) + \sum_{k=1}^M  \int_0^t k(t-s, x-x_{k}) \left( V_{k}(s) q_{k}(s)- \overline{V}_{k} \psi_0(x_{k})\e^{\opi \omega s} \right)   \,ds = \psi_0(x) \e^{\opi \omega t},
\end{equation}
with $k$ as in \eqref{green_schrodinger} with $D=1$. For more detail on derivation of this system see \cite{Lub92}.

%
%
%
For our experiments we take $M=2$,
\begin{equation}\label{data_ex1}
\begin{array}{l}
x_1=-1, \quad x_2=1, \quad \omega=1, \\[1em]
\psi_0(x)=\left\{ \begin{array}{ll} \cosh x/\cosh 1, \quad & \mbox{ for } |x|\le 1, \\[.5em]
\e^{1-|x|},  \quad & \mbox{ for } |x|> 1,
 \end{array} \right.\\[1.5em]
\overline{V}_1= \overline{V}_2=-c,\quad \mbox{ with }\ c=1+ \tanh 1, \\[1em]
V_1(t)=-c(1+\sin(t)),\  V_2(t)=-c(1-\sin(t)), \quad \mbox{for } t>0.
\end{array}
\end{equation}
%
%

The application of Runge--Kutta based CQ to \eqref{chargeeq_lin} yields for $j=1,\dots, M$ approximations $\qv_{j,n} \approx q_j(\tv_n)$, with $\tv_n=(t_{n}+c_i h)_{i=1}^s$, $n=0,1,\dots,N$, defined by
\begin{equation}\label{dischargelins}
\qv_{j,n} + \sum_{k=1}^M \sum_{\ell=0}^{n} \Wv_{n-\ell}(x_j-x_{k})\left( \Vv_{k,\ell}\qv_{k,\ell}- \overline{V}_{k} \psi_0(x_{k})\e^{\opi \omega \tv_{\ell}} \right)   = \psi_0(x_j) \e^{\opi \omega \tv_n},
\end{equation}
where $\Vv_{k,\ell}:={\rm diag}(V_k(t_{\ell} +c_1 h),\dots,V_k(t_{\ell}+c_s h))$.

We then need to solve the discrete linear system
\begin{equation*}
\begin{array}{l}
\qv_{1,n} + \sum_{\ell=0}^{n} \left( \Wv_{n-\ell}(0)\left( \Vv_{1,\ell} \qv_{1,\ell}- \overline{V}_{1} \psi_0(x_{1})\e^{\opi \omega \tv_{\ell}} \right)
 + \Wv_{n-\ell}(2)\left( \Vv_{2,\ell} \qv_{2,\ell}- \overline{V}_{2} \psi_0(x_{2})\e^{\opi \omega \tv_{\ell}} \right)\right)
 \\[1em]
 \hfill = \psi_0(x_1) \e^{\opi \omega \tv_n} \\[1.5em]
\qv_{2,n} +  \sum_{\ell=0}^{n}\left( \Wv_{n-\ell}(0)\left( \Vv_{2,\ell} \qv_{2,\ell}- \overline{V}_{2} \psi_0(x_{2})\e^{\opi \omega \tv_{\ell}} \right)
 +  \Wv_{n-\ell}(2)\left( \Vv_{1,\ell} \qv_{1,\ell}- \overline{V}_{1} \psi_0(x_{1})\e^{\opi \omega \tv_{\ell}} \right)
 \right) \\[1em]
\hfill  = \psi_0(x_2) \e^{\opi \omega \tv_n},
\end{array}
\end{equation*}
This is
\begin{equation}\label{syslins}
\begin{array}{l}
\left(
\begin{array}{ll}
\Iv_s +\Wv_0(0)\Vv_{1,n} & \Wv_0(2) \Vv_{2,n} \\
\Wv_0(2)\Vv_{1,n} & \Iv_s +\Wv_0(0)\Vv_{2,n}
\end{array}
\right)
\left( \begin{array}{l}
\qv_{1,n} \\
\qv_{2,n}
\end{array}
\right) \\[1em]
\hspace{1em} = \left(\begin{array}{l}
(\Iv_s+\overline{V_1} \Wv_0(0)) \psi_0(x_1)\e^{\opi \omega \tv_n}   + \overline{V_2}\psi_0(x_2) \Wv_0(2)\e^{\opi \omega \tv_n} \\
(\Iv_s+\overline{V_2}\Wv_0(0))\psi_0(x_2)\e^{\opi \omega \tv_n}   +  \overline{V_1}\psi_0(x_1) \Wv_0(2)\e^{\opi \omega \tv_n}
\end{array}
 \right) \\[1em]
  \hspace{1em} \displaystyle -  \sum_{\ell=1}^{n}
\left(
\begin{array}{ll}
\Wv_{\ell}(0) & \Wv_{\ell}(2) \\
\Wv_{\ell}(2) & \Wv_{\ell}(0)
\end{array}
\right)
\left( \begin{array}{l}
\Vv_{1,n-\ell} \qv_{1,n-\ell}- \overline{V}_{1} \psi_0(x_{1}) \e^{\opi \omega \tv_{n-\ell}}
\\
\Vv_{2,n-\ell} \qv_{2,n-\ell}- \overline{V}_{2} \psi_0(x_{2})\e^{\opi \omega \tv_{n-\ell}}
\end{array}
\right)
\end{array}
\end{equation}

To compute the memory term on the right-hand side we use the fast method described in Section~\ref{sec:param}. The optimal choice of the various Gauss quadrature weights and nodes used in this algorithm depend on $d$ which is above $d =0$ or $d = 2$. However from our analysis it follows that the error when using the optimal choice for a certain $d=d_{\max}$ will be bounded by the tolerance  for every $d\in(0,d_{\max})$. We thus compute the optimal quadrature for $d=2$ and use it to represent both $\Wv_j(0)$ and $\Wv_j(2)$, for every $j=n_0+1,\dots,N$.

  \begin{figure}
    \centering
    \includegraphics[width=0.45\textwidth]{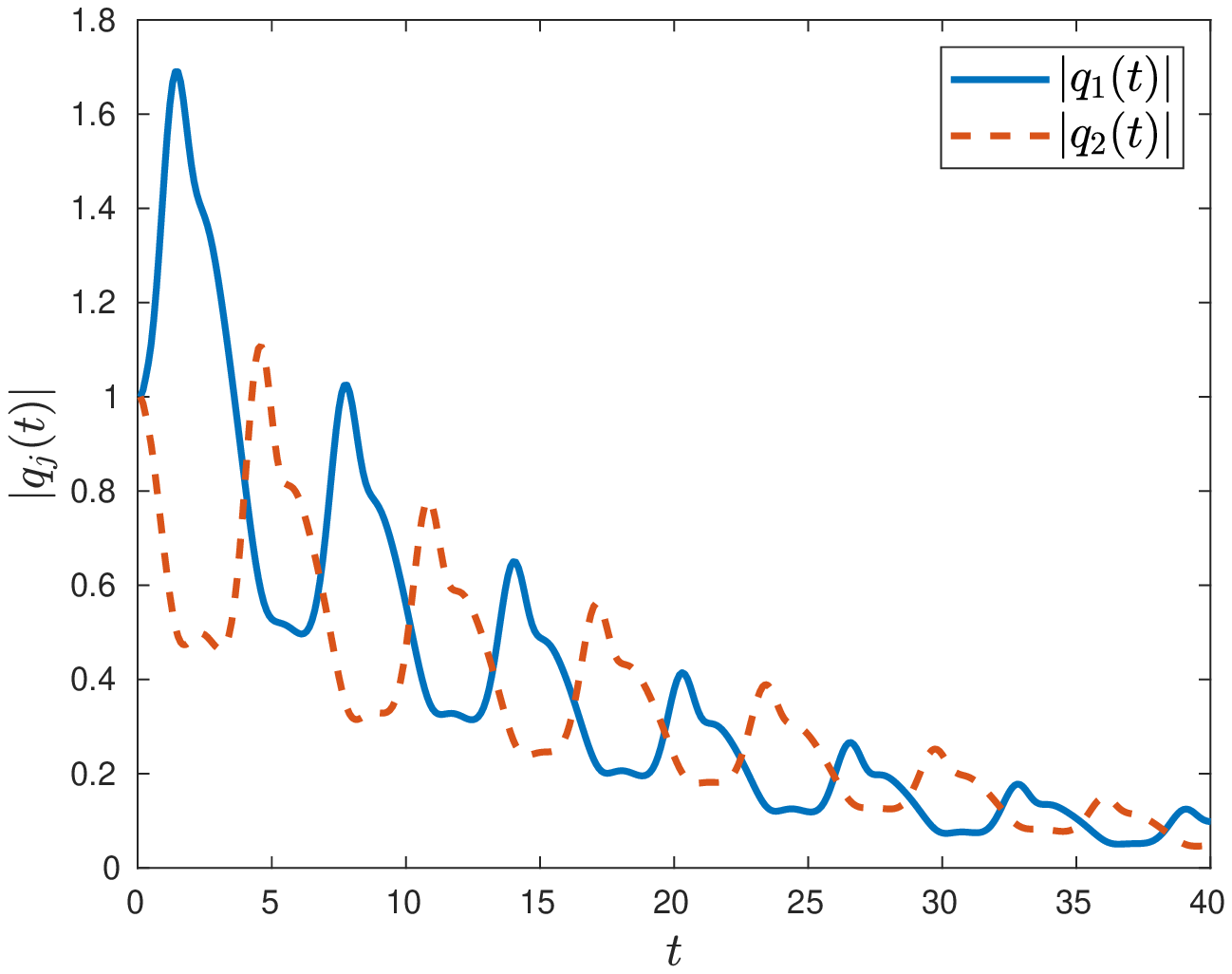}
        \includegraphics[width=0.45\textwidth]{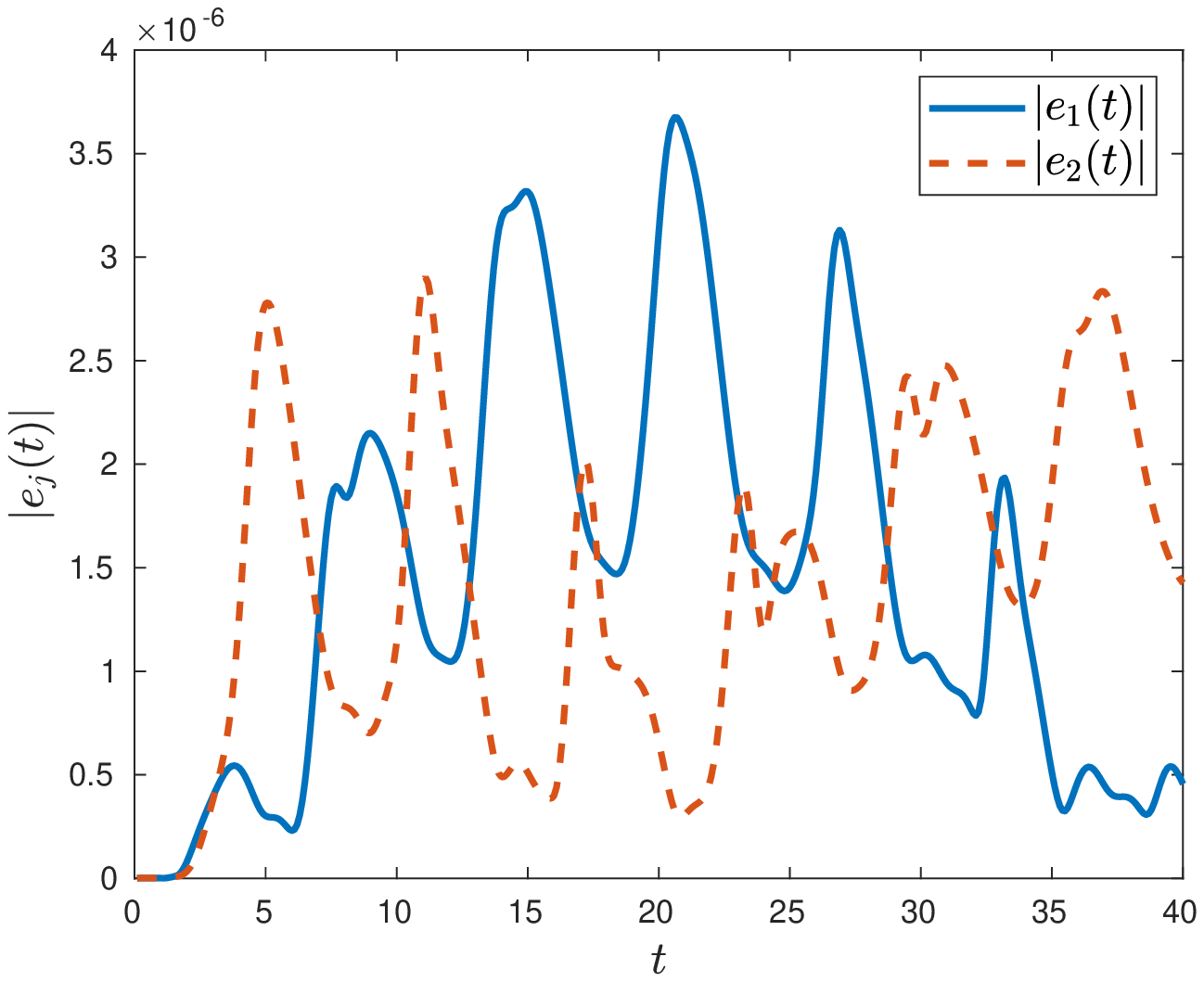}
    \caption{On the left we plot the approximate solution $|q_j(t)|$ of the linear charge equations \eqref{chargeeq_lin} obtained by the fast method with $\tol = 10^{-6}$. On the right we show the difference $e_j(t) = |q_j(t)-q_j^{\text{st}}(t)|$ between this solution and the one obtained by the standard implementation of CQ.}
    \label{fig:lin_sch_results}
  \end{figure}

  In Figure~\ref{fig:lin_sch_results} we show a plot of the solution and the difference in the solutions obtained by our fast method and the standard implementation of CQ. In these experiments we choose $T = 40$, $h = 0.1$, $\tol = 10^{-6}$ and $d = 2$. Our optimization routines returned $n_0 = 14$ and the total number of quadrature points $N_Q = 36$. As we see, the error is slightly larger than the tolerance, which is to be expected as the tolerance is valid for the computation of the weights not the final result of the discrete convolution.

\subsection{A non linear Schr\"odinger equation with concentrated potential}

We consider nonlinear Schr\"odinger equations with concentrated potentials and, in particular, the case studied in \cite{Negulescu}. Thus, we compute the solution $\psi(x,t)$, for $x\in\bR$, $t>0$ to:
\begin{equation}\label{ns2p}
\frac1\opi \partial_t \psi = \partial_{xx} \psi - \sum_{j=1}^{M} \gamma_j |\psi|^{2\sigma} \delta_{x_j}  \psi ; \quad \psi(0,x)=\psi_0(x),
\end{equation}
with $\gamma_j < 0, \ \sigma\ge 0$. Writing the solution using Duhamel's principle gives
\begin{equation}
  \label{eq:psi_form}
  \psi(t,x) = \phi(t,x)
  - \sum_{j = 1}^M\gamma_j\int_0^t k(t-s,x-x_j) |\psi(t-s, x_j)|^{2\sigma}\psi(t-s,x_j) ds,
\end{equation}
where $k$ is the Green's function \eqref{green_schrodinger} for $D = 1$ and $\phi(t,x)$ is the solution of the homogeneous problem:
\begin{equation}
  \label{eq:phi_hom}
\phi(t,x) =   \int_{\mathbb{R}} k(t,x-y) \psi_0(y) dy  .
\end{equation}
Evaluating \eqref{eq:psi_form} at $x = x_k$, $k = 1,\dots, M$,  gives the following system of integral equations
\begin{equation}
  \label{eq:psi_int_eq}
  \psi(t,x_k) +
   \sum_{j = 1}^M\gamma_j\int_0^t k(t-s,x_k-x_j) |\psi(t-s,x_j)|^{2\sigma}\psi(t-s,x_j) ds = \phi(t,x_k).
\end{equation}


In the following numerical experiments we set $M=2,\ x_1=-a,\ x_2=a,\ \gamma_1=\gamma_2=\gamma < 0$,
\begin{equation}\label{q12}
  q_1(t)=\psi(-a,t),\quad  q_2(t)=\psi(a,t), \quad
  \phi_1(t)=\phi(-a,t),\quad  \phi_2(t)=\phi(a,t),
\end{equation}
giving the system of equations
\begin{equation}\label{chargeeq_nonlin}
q_k(t) + \gamma \sum_{j=1}^2  \int_0^t k(t-s, x_k-x_{j})|q_{j}(s)|^{2\sigma} q_{j}(s)  \,ds = \phi_k(t), \quad k = 1,2.
\end{equation}


For optimal performance, convolution quadrature requires that the data can be extended smoothly to negative times by zero.  Since $q_{1,2}(0)\ne 0$ we modify the system as follows
\begin{equation}\label{chargeeq_corr}
q_k(t) + \gamma \sum_{j=1}^2  \int_0^t k(t-s, x_k-x_{j})\left(|q_{j}(s)|^{2\sigma} q_{j}(s)-|q_{j}(0)|^{2\sigma} q_{j}(0)\right)  \,ds = \phi_k(t)-f_k(t),
\end{equation}
$k = 1,2$ with correction terms
\[
f_k(t) = \gamma \sum_{j=1}^2 |q_{j}(0)|^{2\sigma} q_{j}(0) \int_0^t k(s, x_k-x_{j})\,ds.
\]
To compute the correction terms we use the formula obtained using symbolic computation software
\begin{equation}
  \label{eq:corr_for}
\begin{aligned}
\int_0^t k(s,d)\,ds &= \frac{\e^{\opi \pi/4}}{\sqrt{\pi}} \int_0^{t} \frac{1}{\sqrt{4s}}\e^{\opi d^2/4s}\,ds \\
&=\frac{\e^{\opi \pi/4}}{4\sqrt{\pi}} \int_0^{4t} \frac{\e^{\opi d^2/u}}{\sqrt{u}}\,du = \frac{\e^{\opi \pi/4}}{2\sqrt{\pi}} \int_0^{2\sqrt{t}} \e^{\opi d^2/\xi^2}\,d\xi \\
&= e^{\mi \pi/4} e^{\mi \frac{d^2}{4t}} \sqrt{t/\pi}-(d/2)
\erf\left(\frac{e^{3\pi \mi/4}d}{2\sqrt{t}}\right) -d/2,
      \end{aligned}
\end{equation}
where $\erf$ is the error function.

  After discretization of \eqref{chargeeq_corr} using CQ as in Section~\ref{sec:lin_sch} we obtain a non-linear system to be solved at each step of the form
  \[
    \begin{split}
    \begin{pmatrix}
\Iv_s +\gamma\Wv_0(0) & \gamma\Wv_0(6)  \\
\gamma\Wv_0(6)|  & \Iv_s +\gamma\Wv_0(0)
\end{pmatrix}
&
\begin{pmatrix}
\diag(|\qv_{1,n}|^{2\sigma})\qv_{1,n}-\diag(|\qv_{1,0}|^{2\sigma})\qv_{1,0} \\
\diag(|\qv_{2,n}|^{2\sigma})\qv_{2,n}-\diag(|\qv_{2,0}|^{2\sigma})\qv_{2,0}
\end{pmatrix}\\
&=
\begin{pmatrix}
  \phi_1(\tv_n) \\
\phi_2(\tv_n)
\end{pmatrix}
+ \mathbf{H}(\tv_n),
    \end{split}
  \]
  where $\mathbf{H}(t)$ is the history term containing terms known at time-step $n$. We solve the non-linear equation by a fixed-point iteration with the initial guess given by the solution at the previous time-step. The history term $\mathbf{H}(\tv_n)$ is computed using our fast method.

\subsection{The linear case}

Let us first consider the linear case, i.e., $\sigma = 0$. We look for solutions of the form
\begin{equation}
  \label{eq:psi_lin}
\psi(t,x) = \alpha e^{\mi \lambda_\tf t} \phi_\tf(x)+ \beta e^{\mi \lambda_\te t} \phi_\te(x),
\end{equation}
with $\alpha^2+\beta^2 = 1$ and
\begin{equation}
  \label{eq:phi_f}
  \phi_\tf(x) = N_\tf \left(K(\mi \lambda_\tf,x+a)+K(\mi \lambda_\tf,x-a)\right)
\end{equation}
and
\begin{equation}
  \label{eq:phi_e}
  \phi_\te(x) = N_\te \left(K(\mi \lambda_\te,x+a)-K(\mi \lambda_\te,x-a)\right),
\end{equation}
where constants $N_\tf$ and $N_\te$ are chosen so that $\|\phi_\tf\|_{L^2(\mathbb{R})} = \|\phi_\te\|_{L^2(\mathbb{R})} = 1$. Here, $\phi_\tf$ and $\phi_\te$ correspond to the fundamental respectively excited state, see \cite{Negulescu}.
The initial data is hence of the form
\begin{equation}\label{ininegu}
\psi_0(x)=\alpha \phi_f(x) + \beta\phi_e(x).
\end{equation}

Substituting this ansatz into \eqref{ns2p} with $\sigma = 0$,  gives the following relations that need to be satisfied by $\lambda_\tf$ and $\lambda_\te$
\[
K(\mi\lambda_{\tf},0)+K(\mi\lambda_{\tf},2a) = -\frac1\gamma
\]
and
\[
K(\mi\lambda_{\te},0)-K(\mi\lambda_{\te},2a) = -\frac1\gamma.
\]

We choose the parameters $\alpha=\sqrt{0.01}$, $\beta=\sqrt{0.99}$, $a=3$, $\gamma=-0.5$. Solving numerically the above nonlinear equations gives the eigenvalues $\lambda_f = 0.085894322668323$ and $\lambda_e = 0.021229338264198$. For this special initial data, formulas similar to \eqref{eq:corr_for} are available for expressing the solution to the free Schr\"odinger equation $\phi(t,x)$ in terms of the error function, see \cite{Negulescu}.


We apply the fast convolution quadrature method based on the 2-stage Radau IIA Runge-Kutta method. The numerical results with $T = 100$ and $h = 1$ are given in Figure~\ref{fig:neg_lin}. There we plot $|q_j(t)|^2$ and the error
\[
  e_j(t) = \left||q_j(t)|^2-|q^{\text{ex}}_j(t)|^2\right|,
\]
where $q_j(t)$ is the numerically obtained solution and $q^{\text{ex}}_j(t)$ the exact solution. The results are of high quality with, as expected, larger error near $t = 0$ and slight increase in error with increase in time.

\begin{figure}
  \centering
  \includegraphics[width=.45\textwidth]{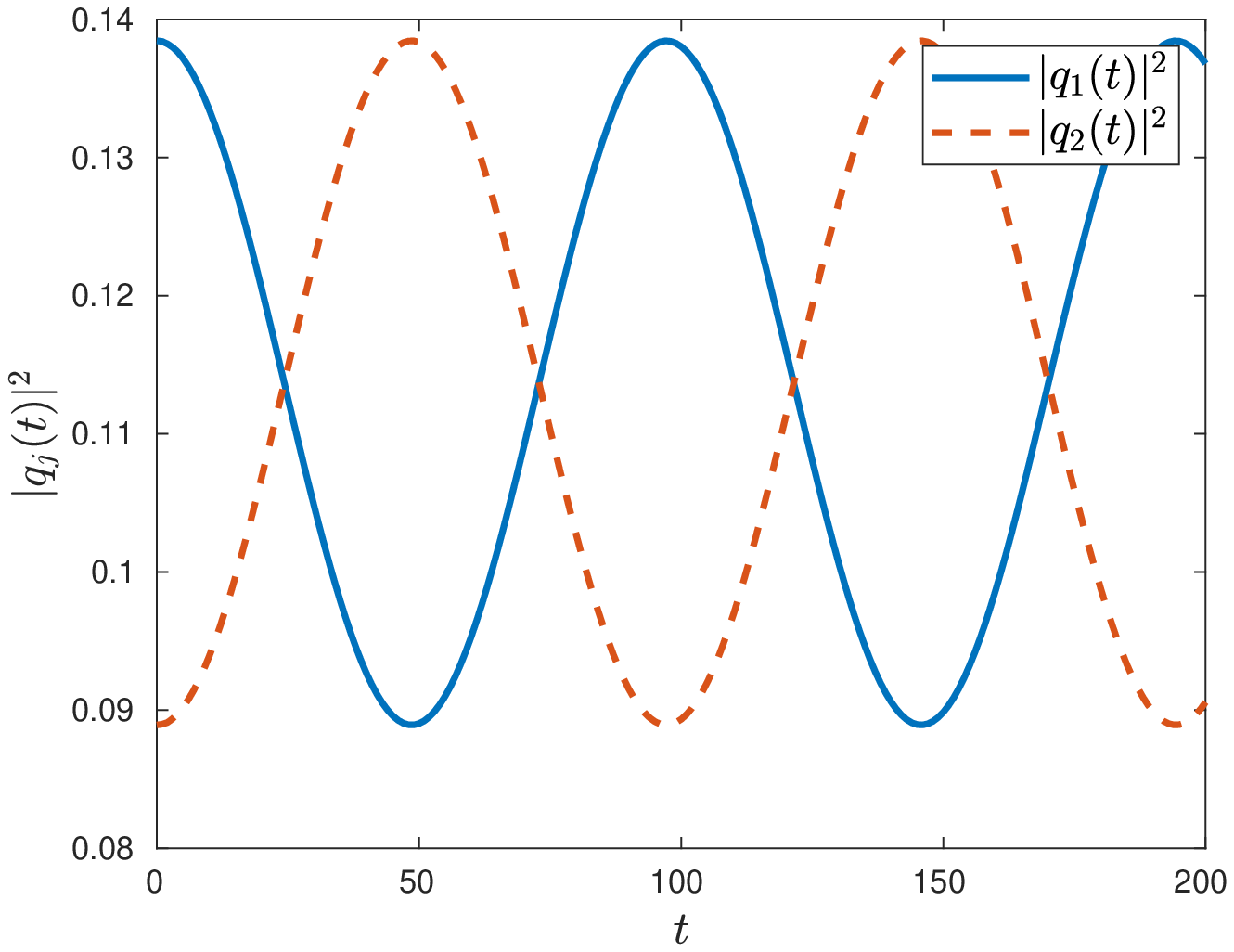}
    \includegraphics[width=.45\textwidth]{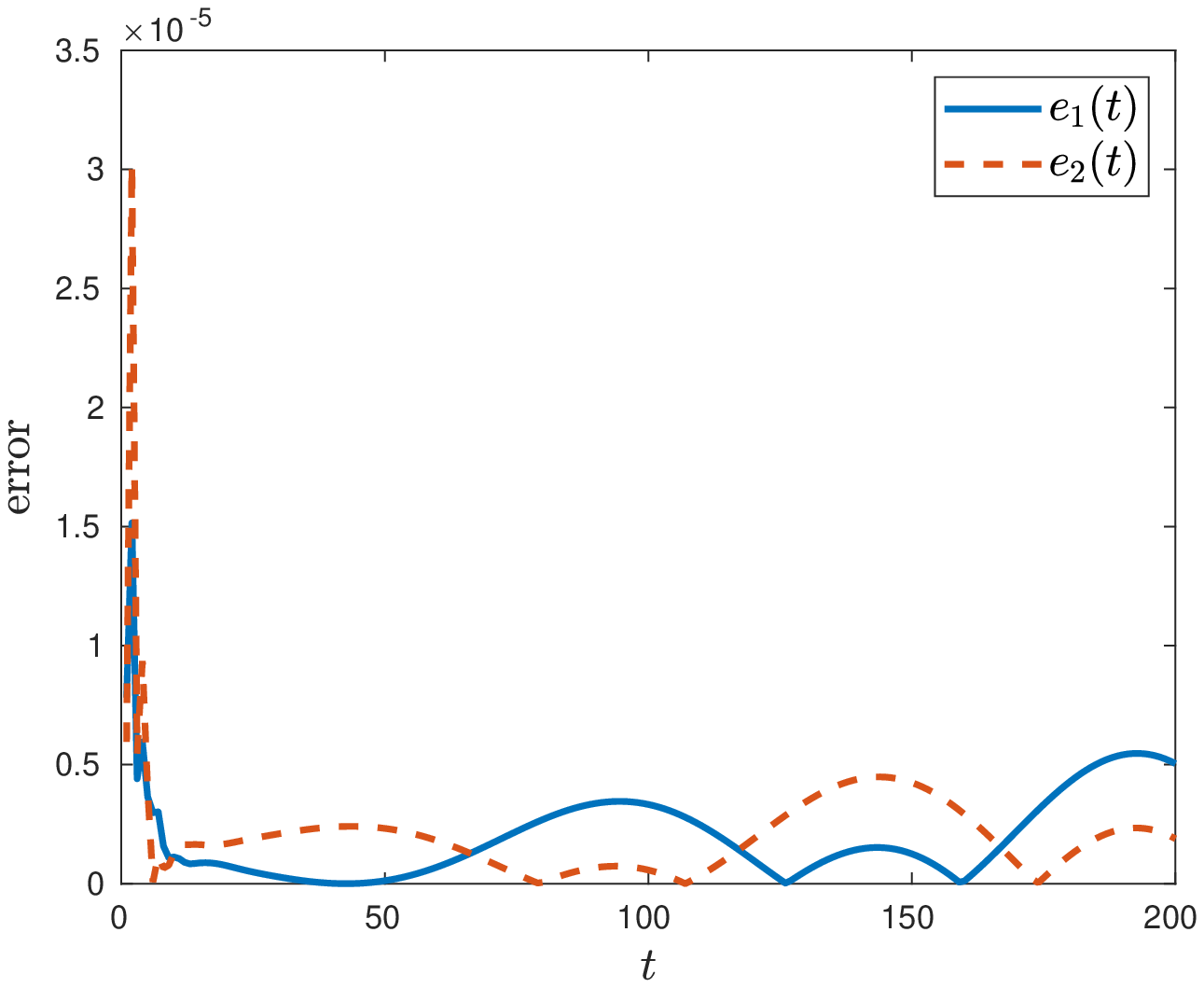}
  \caption{On the left we plot the numerically computed $|q_j(t)|^2$ for $j = 1,2$. On the right we show the error $e_j(t) = ||q_j^{\mathrm{ex}}(t)|^2-|q_j(t)|^2|$.}
  \label{fig:neg_lin}
\end{figure}

The importance of this example is that it shows the so-called beating motion of the system. This is not an easy problem to solve numerically and these excellent results in the linear case give us confidence in the nonlinear results presented in the next section. In Table~\ref{tab:eh} we also show the numerically observed convergence rate of the error measure
\[
  e^h = \max_n \max(e_1(t_n),e_2(t_n))
\]
as the time-step $h > 0 $ is reduced.  We should note here that there seems to be a slight discrepancy in the initial data we obtain to the ones that could be seen in Fig.~7 in \cite{Negulescu}, where the same parameters are used. We obtain $|q_1(0)|^2 \approx 0.1385$ and $|q_2(0)|^2 \approx 0.0889$, values slightly smaller than in \cite{Negulescu}. The reason for this is possibly different normalization.

\begin{table}
  \centering
  \begin{tabular}{|c|c|c|c|c|c|}
\hline    $h$ & 2& 1 & 1/2& 1/4& 1/8\\\hline
    $e^h$ &$7.22 \times 10^{-5}$&$3.11 \times 10^{-5}$&$1.12 \times 10^{-5}$&$5.69 \times 10^{-6}$&$2.29 \times 10^{-6}$\\
    EOC &  &1.2 & 1.5& 0.98 & 1.3\\\hline
  \end{tabular}
  \caption{Error $e_h$ for decreasing values of $h$ for the linear beating motion problem and the estimated order of convergence.}
  \label{tab:eh}
\end{table}


\subsection{Non-linear case}

In the non-linear case we take the same initial data \eqref{ininegu} as in the linear case and observe for which values of $\sigma > 0$ is the beating effect supressed. All other parameters are the same as in the linear case except for
\[
\gamma = -\frac{1}{|\psi_0(a)|^{2\sigma}+ |\psi_0(-a)|^{2\sigma}}.
\]

We compute the solution for $\sigma = 0.3, 0.6, 0.7, 0.8, 0.9, 0.98$. For low values of $\sigma$ the beating phenomenon is still visible, whereas for stronger non-linearities it begins to disappear.  It is also interesting that the numerical computation becomes increasingly difficult with increasing $\sigma$. This is not suprising as it is known that for large enough $\sigma$ blow-up can occur in finite time \cite{Negulescu}. Solutions for the different $\sigma$ is given in Fugures~\ref{fig:sig03}--\ref{fig:sig09} with some extra detail for $\sigma = 0.9$ given in Figure~\ref{fig:sig09_err}. The plots for $\sigma = 0.3, 0.6, 0.7, 0.8$ do not change at this scale for smaller time-step $h$. For $\sigma = 0.9$ the basic shape of the solution seems to be well captured but as indicated in Figure~\ref{fig:sig09_err} $q_1$ becomes increasingly oscillatory and the error for $q_1$ increases significantly for larger $t$. Finally for $\sigma = 0.98$ blow-up seems to occur near $t = 14.3$, decreasing $h$ just increases the height of the peak. Note that the largest computation for $\sigma = 0.9$ and $h = 1/128$ required us to compute $N = 25600$ time-steps. For this case our algorithm needed $n_0 = 57$ direct steps and $N_Q = 112$ quadrature nodes for $\tol = 10^{-8}$.
In Figure~\ref{fig:timings} we compare the computational times of the new method, the $O(N\log^2N)$ method based on FFT introduced in \cite{HaLuSch} and as modified in \cite{Ban10,lb_ms},  and the standard naive $O(N^2)$ implementation. These timings, clearly show that  the availability of a fast method was essential to perform experiments in reasonable time.  Furthermore the new method is the fastest in all the listed experiments, though the FFT based method is also very fast. However, only the new method brings savings in terms of memory. These are not significant in the one dimensional cases with a potential concentrated in only two points as investigated here. For a potential  concetrated in many points,  or for higher dimensional problems with the potential concentrated on a manifold, the memory savings will become equally important.

%

  \begin{figure}
    \centering
      \includegraphics[width=0.6\textwidth]{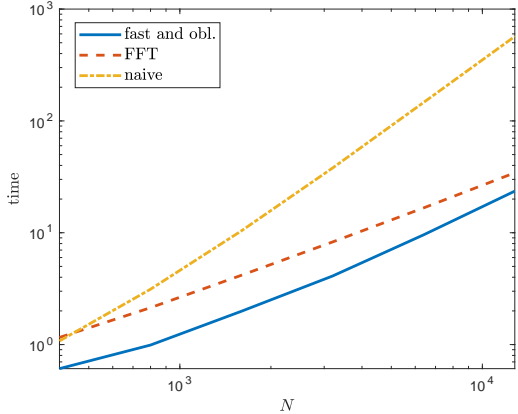}
    \caption{We compare the new, fast and oblivious method with the method of \cite{HaLuSch} based on FFT and the naive $O(N^2)$ implementation. To do this, we solve the non-linear Schr\"{o}dinger equation with concentrated potentials using the ones stage Radau IIA method, i.e., the backward Euler method, with a fixed final time $T = 200$, $\sigma = 0.8$ and an increasing number of time steps $N$. We see that the new method is the fastest, whereas the naive method is extremely slow.}
    \label{fig:timings}
  \end{figure}

\begin{figure}
  \centering
  \includegraphics[width=0.45\textwidth]{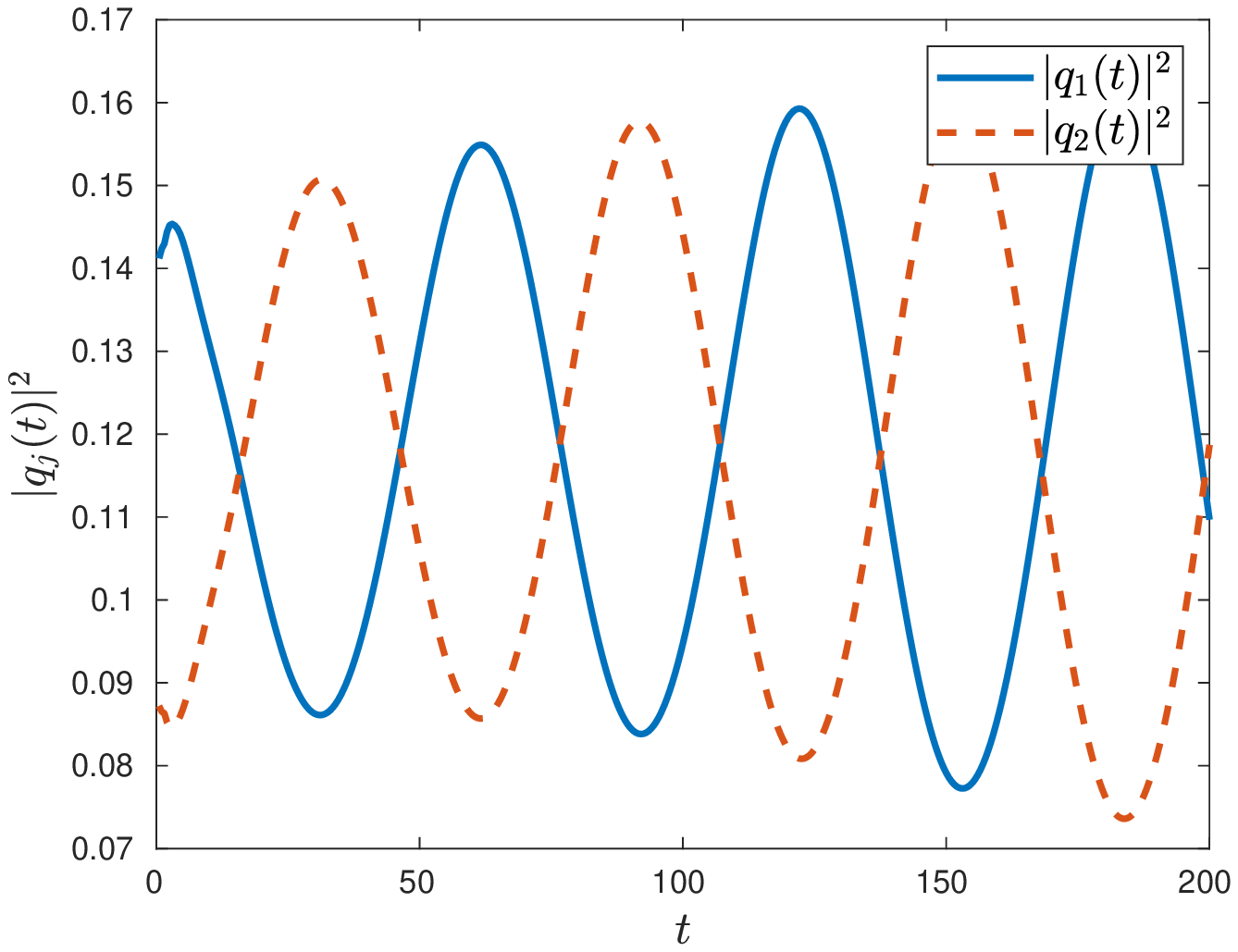}
    \includegraphics[width=0.45\textwidth]{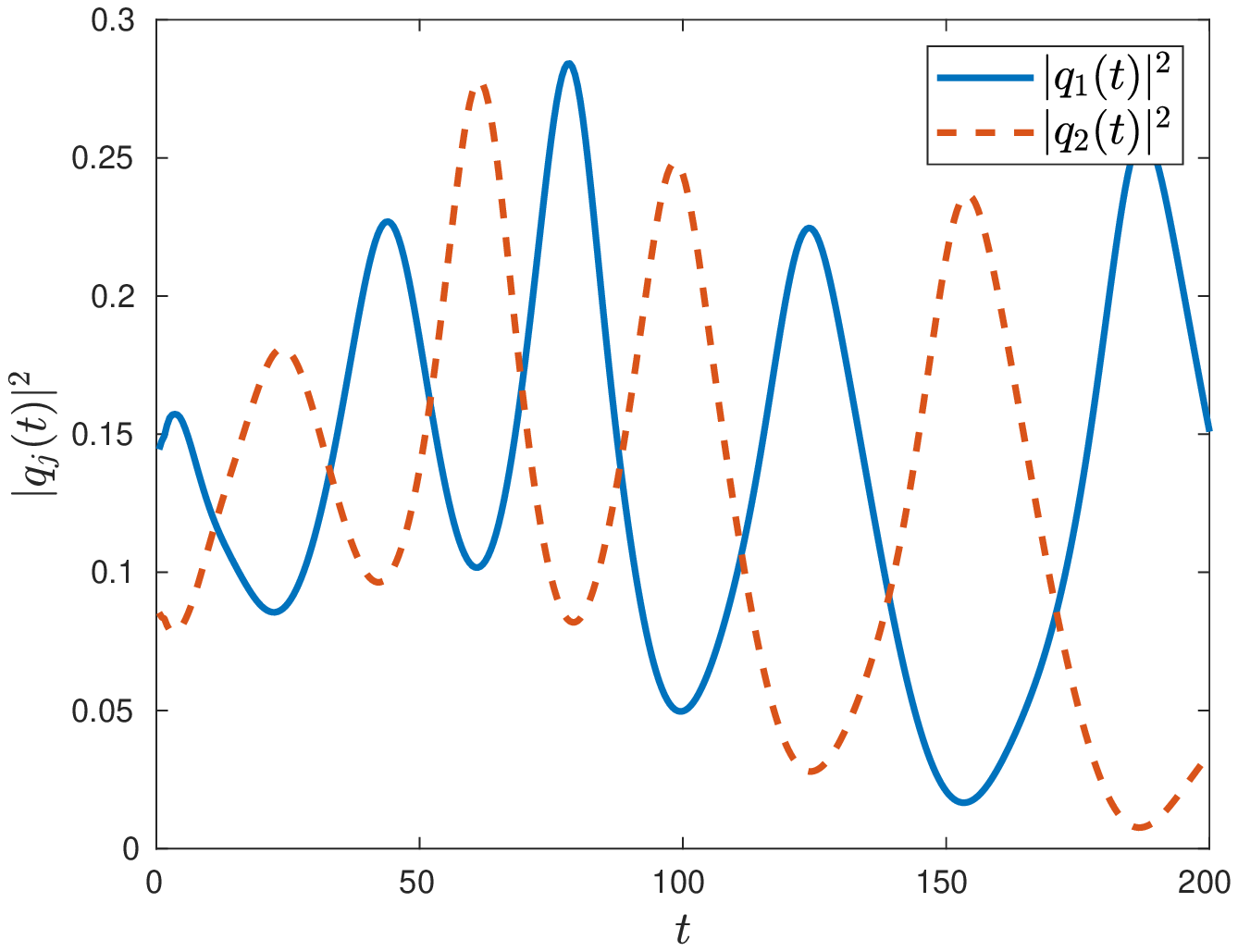}
  \caption{On the left we show the solution for $\sigma = 0.3$ and on the right for $\sigma = 0.6$. Time-step $h = 1/2$ is used in both calculations.}
  \label{fig:sig03}
\end{figure}

\begin{figure}
  \centering
  \includegraphics[width=0.45\textwidth]{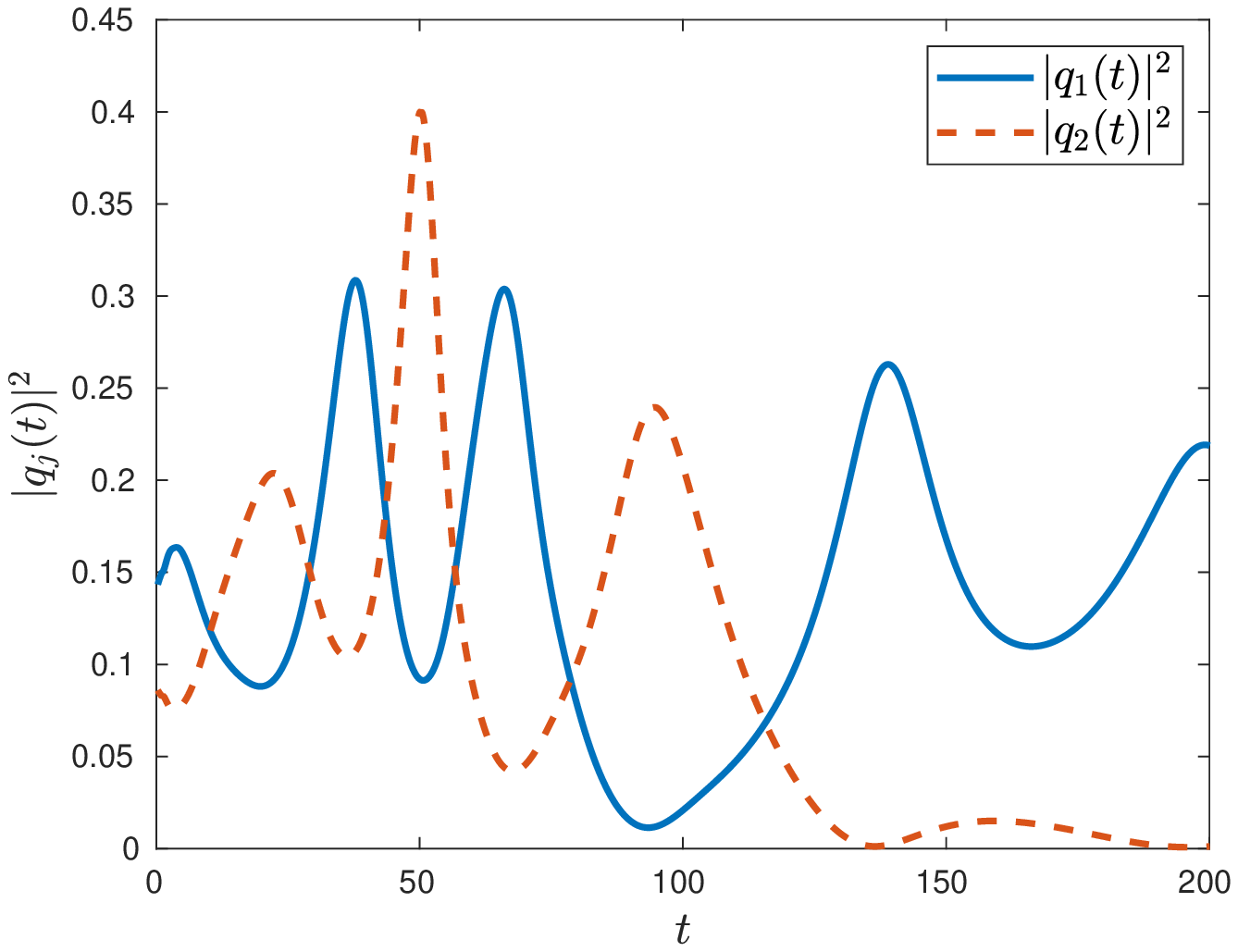}
    \includegraphics[width=0.45\textwidth]{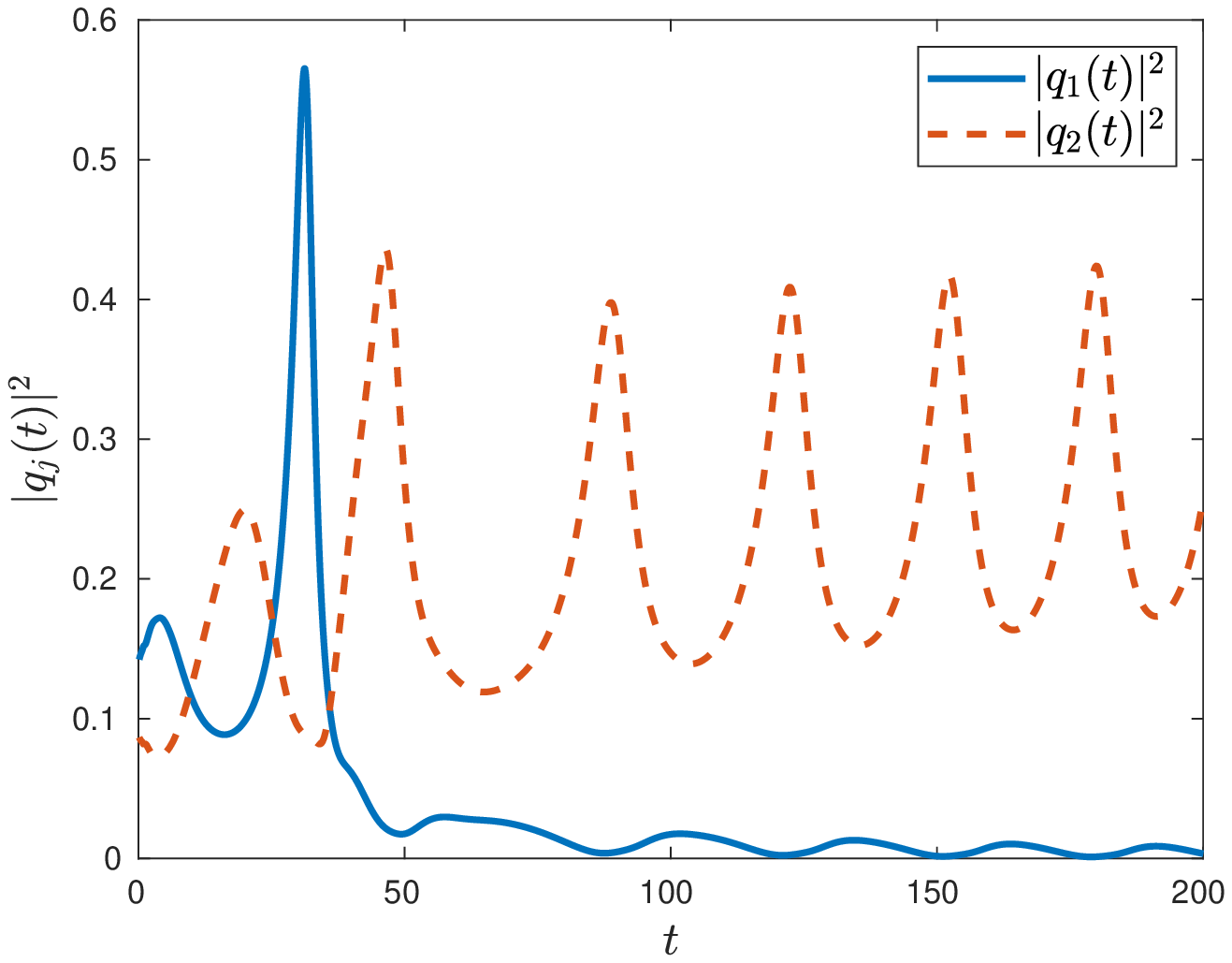}
  \caption{On the left we show the solution for $\sigma = 0.7$ and on the right with $\sigma = 0.8$. Time-step $h = 1/4$ is used for $\sigma = 0.7$ and $h = 1/8$ for $\sigma = 0.8$.}
  \label{fig:sig07}
\end{figure}

\begin{figure}
  \centering
  \includegraphics[width=0.45\textwidth]{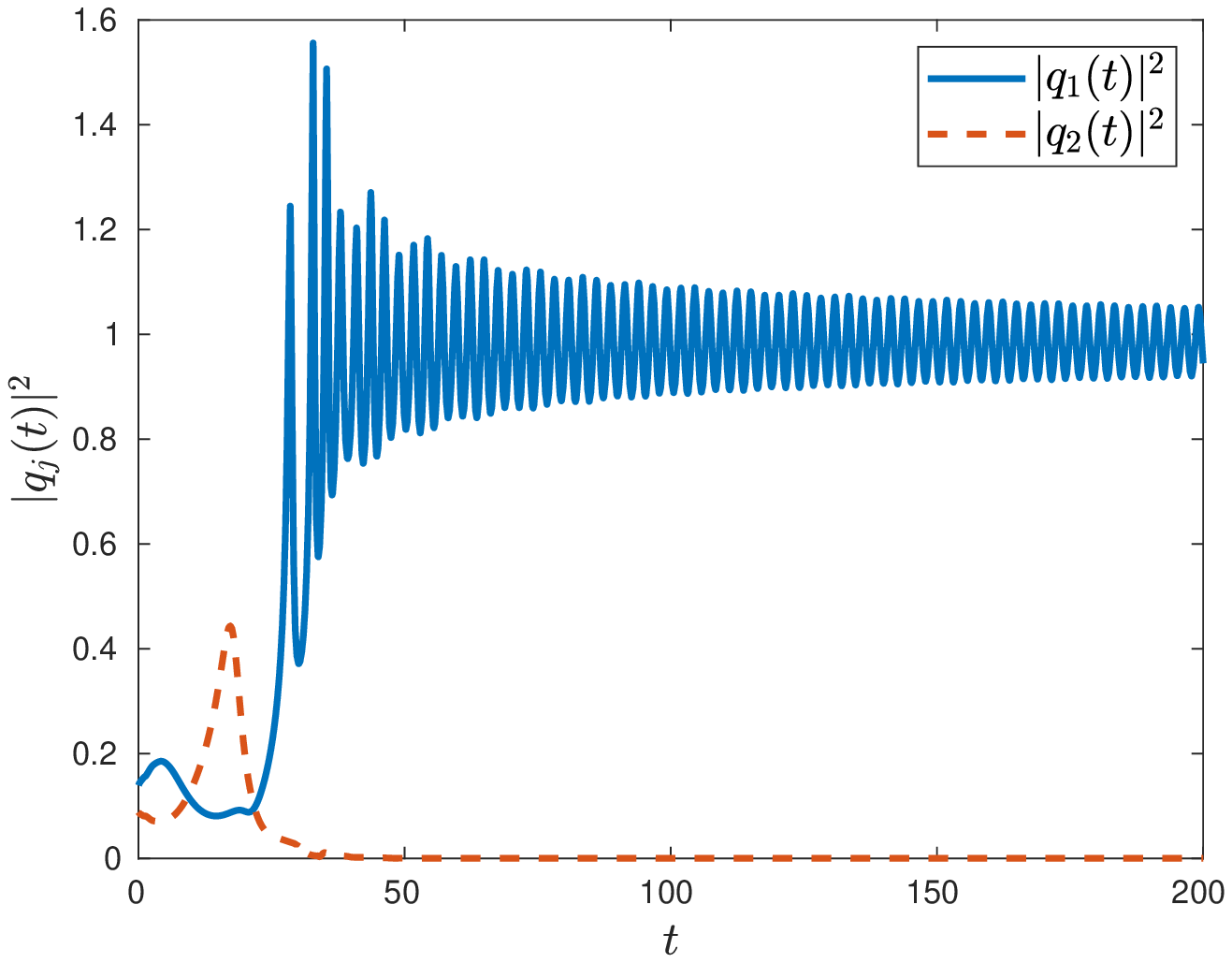}
    \includegraphics[width=0.45\textwidth]{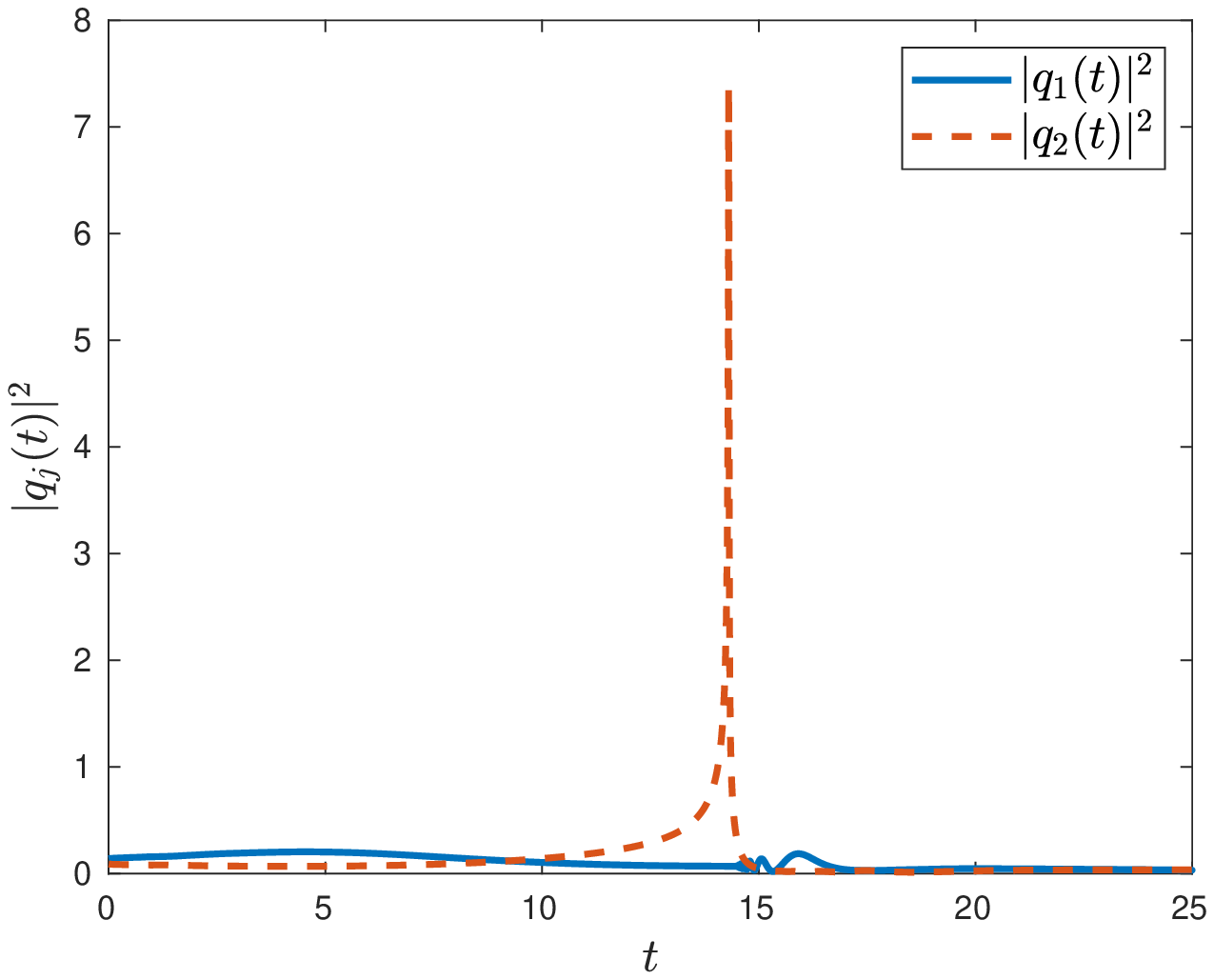}
  \caption{On the left we show the solution for $\sigma = 0.9$ and on the right with $\sigma = 0.98$. Time-step $h = 1/128$ is used for $\sigma = 0.9$ and $h = 1/256$ for $\sigma = 0.98$. On the right blow-up seems to occur around $t = 14.3$.}
  \label{fig:sig09}
\end{figure}

\begin{figure}
  \centering
  \includegraphics[width=0.6\textwidth]{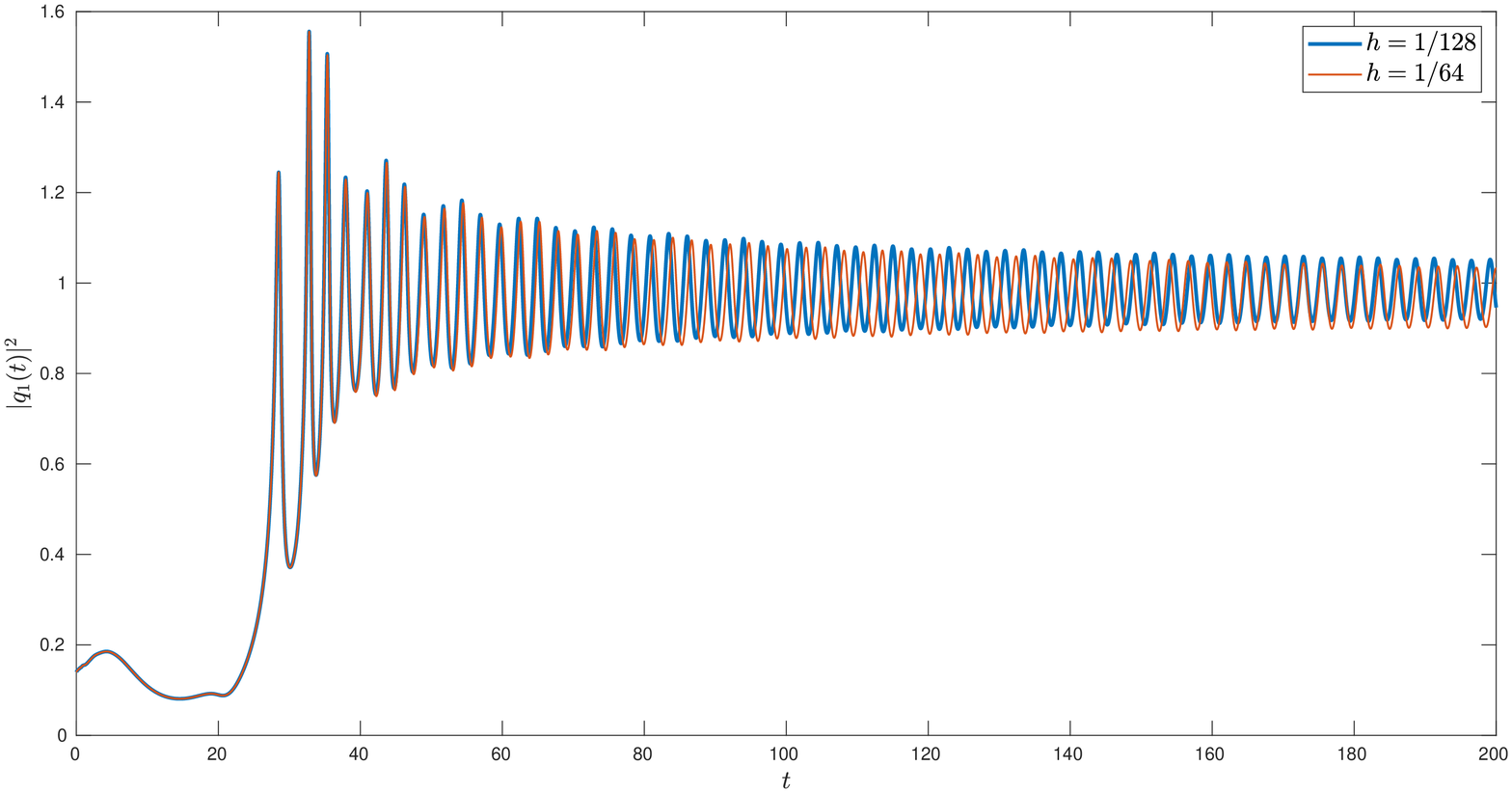}
    \includegraphics[width=0.38\textwidth]{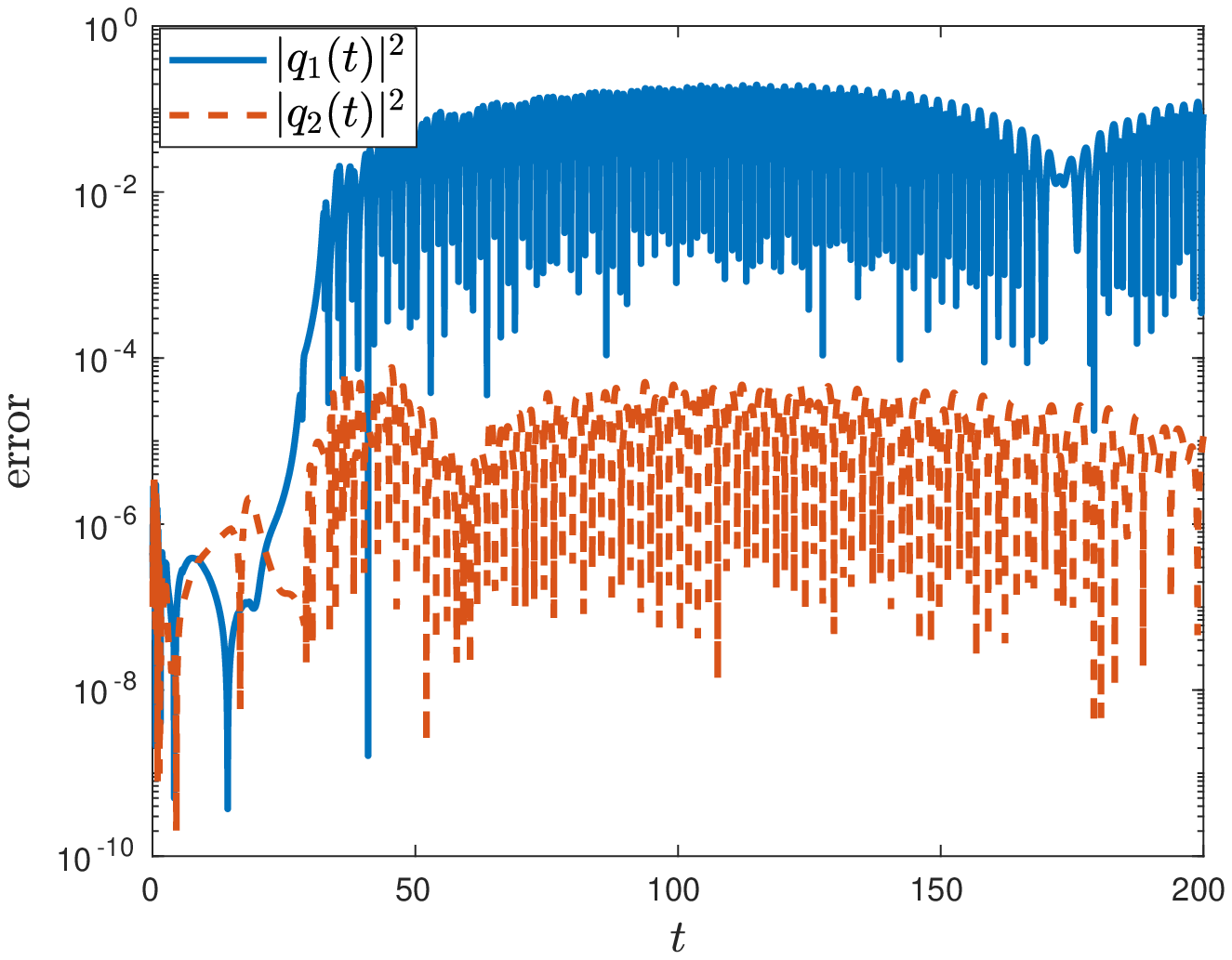}
  \caption{On the left we compare the solution $|q_1(t)|^2$ for $\sigma = 0.9$ and two choices of time-step: $h = 1/64$ and $h = 1/128$. On the right we show the difference in computing $|q_j(t)|^2$, $j = 1,2$, for the two different choices of $h$.}
  \label{fig:sig09_err}
\end{figure}

\def\cprime{$'$}

\section{Conclusions}
We have developed a special algorithm for the implementation of Lubich's Convolution Quadrature when applied to the integral formulation of Schr\"odinger equations with concentrated potential. The new algorithm belongs to the family of the so-called {\em fast and oblivious convolution algorithms}, since for the approximation of the solution at $N$ time steps it is able to reduce the complexity from $O(N^2)$ operations to $O(N\log N)$ and the storage from $O(N)$ to $O(n_0+\log N)$, with $n_0\ll N$. These features allow us to reliably simulate the behavior of the solution to non linear problems for long times and/or with a very small step, in order to capture high oscillations or finite time blow up. Our results are in good agreement with those reported in \cite{Lub92} and in \cite{Negulescu}, where two different methods with complexity $O(N^2)$ and memory requirements $O(N)$ are used. The MATLAB codes written to perform the simulations in the current paper can be found in \cite{codes}.

Future research will address the theoretical analysis of the error associated to the Convolution Quadrature approximation of relevant non linear cases, the control of the time step and the generalization of the algorithm and its application to the two dimensional case, following the recent results in \cite{CCF17}.

\section*{Acknowledgements}
The second author acknowledges Alessandro Teta for very useful discussion about the models during the preparation of the paper. The second author also acknowledges partial support by INdAM-GNCS and the Spanish grant MTM2016-75465-P.


\def\cprime{$'$}

\end{document}